\newtheorem{example}{Example}[section]
\newcommand{\floor}[1]{\left\lfloor #1 \right\rfloor}
\algrenewcommand\algorithmicrequire{\textbf{Input:}}
\algrenewcommand\algorithmicensure{\textbf{Output:}}
\title{Preconditioning Low Rank Generalized Minimal Residual Method (GMRES) for Implicit Discretizations of Matrix Differential Equations\thanks{Submitted to the editors \today.
}}
\author{Shixu Meng\thanks{Department of Mathematics, Virginia Tech,
Blacksburg, VA 24061 U.S.A.
  (\email{sgl22@vt.edu}).}
\and Daniel Appel\"{o}
\thanks{Department of Mathematics, Virginia Tech,
Blacksburg, VA 24061 U.S.A. (\email{appelo@vt.edu}) Research supported by DOE Office of Advanced Scientific Computing Research under the Advanced Research in Quantum Computing program, Award Number DE-SC0025424, NSF DMS-2208164, and Virginia Tech. 
 }
\and
 Yingda Cheng
\thanks{Department of Mathematics, Virginia Tech,
Blacksburg, VA 24061 U.S.A.  (\email{yingda@vt.edu})
 {\tt yingda@vt.edu}  Research supported by   DOE grant DE-SC0023164 and Virginia Tech.}}
\begin{document}

\maketitle

\begin{abstract}
This work proposes a new class of preconditioners for the low rank Generalized Minimal Residual Method (GMRES) for multiterm matrix equations arising from implicit timestepping of linear matrix differential equations. We are interested in  computing low rank solutions to  matrix equations, e.g. arising from spatial discretization of stiff partial differential equations (PDEs). The low rank GMRES method is a particular class of Krylov subspace method where the iteration  is performed on the low rank factors of the solution. Such methods can exploit the low rank property of the solution to save on computational and storage cost.

Of critical importance for the efficiency and applicability of the low rank GMRES method is the availability of an effective low rank preconditioner that operates directly on the low rank factors of the solution and that can limit the iteration count and the maximal Krylov rank. 

The preconditioner we propose here is based on the basis update and Galerkin (BUG) method, resulting from the dynamic low rank approximation. It is a nonlinear preconditioner for the low rank GMRES scheme that naturally operates on the low rank factors. Extensive numerical tests show that this new preconditioner is highly efficient in limiting iteration count and maximal Krylov rank.  We show that the preconditioner performs well  for general diffusion equations including highly challenging problems, e.g. high contrast, anisotropic equations. Further, it compares favorably with the state of the art exponential sum preconditioner. We also propose a hybrid BUG - exponential sum preconditioner based on alternating between the two preconditioners. 
\end{abstract}

\section{Introduction}

We are interested in computing low rank solutions from  implicit discretizations of linear matrix differential equations. The model equation takes the form

\begin{equation}
\label{eqn:mode}
\frac{d}{dt}X(t)=F(X(t),t),  \quad X(t) \in \mathbb{R}^{m_1\times m_2}, \quad X(0)=X_0,
\end{equation}
where 
\[
F(X(t),t)=\sum_{j=1}^s A_j X(t) B_j^T +G(t),
\] 
and $A_j \in \mathbb{R}^{m_1\times m_1}, B_j \in \mathbb{R}^{m_2\times m_2}$ are sparse or structured matrices where a fast matrix-vector product is known. Further, $G(t) \in  \mathbb{R}^{m_1\times m_1} $ is a given function that is assumed to have a known low rank decomposition. The integer $s$ is assumed not to be too large. 
Equation \eqref{eqn:mode} arises in many applications governed by partial differential equations (PDE) after spatial discretizations have been deployed. Although the present paper only concerns matrix equations, we note that the methodology here was designed with low rank tensor approximation as a future research direction.

Using the method-of-lines approach and by implicit temporal discretizations, we arrive at a mutiterm linear matrix equation of the following form
\begin{equation}
\label{eq:lme}
    C_1 X D_1^T+ C_2 X D_2^T + \cdots C_k X D_k^T := \mathcal{A} X= b,
\end{equation}
where  $\mathcal{A}: \mathbb{R}^{m_1\times m_2} \mapsto \mathbb{R}^{m_1\times m_2}$ is the associated linear operator.
 If \eqref{eq:lme} has special structure (for example when it has Lyapunov or Sylvester form), then specialized solvers have been developed \cite{simoncini2016computational}. 
However, in the most generic form \eqref{eq:lme}, 
 the standard algorithm    consists of transforming the matrix above into vector form
\begin{equation}
\label{eq:lmev}
 (D_1 \otimes C_1+  \cdots+ D_k \otimes C_k) {\rm vec} (X) := \mathcal{A}_v {\rm vec} (X) = {\rm vec} (b)
\end{equation}
and then using numerical linear algebra tools to solve \eqref{eq:lmev}. Here, $\otimes$ denotes the standard matrix Kronecker product and ${\rm vec}(X)  \in \mathbb{R}^{m_1 m_2}$ denotes the vectorized form of the matrix $X.$ The drawback of this approach is that special structure of the matrix $X$,  which can potentially save computational storage and cost, will be lost. 

In many applications, the solution to \eqref{eqn:mode}   is of low rank or can be well approximated by a low rank matrix, and we are interested in numerical methods  that can exploit this. A prominent class of such methods is low rank Krylov methods, which use low rank truncation within a standard Krylov method for \eqref{eq:lme} or \eqref{eq:lmev}. Such ideas can also be generalized to tensor linear equations when truncation in specific tensor format is used. For matrix equations,  we mention the work of  low rank truncation by a greedy approach combined with the Galerkin method \cite{kressner2015truncated} and low rank conjugate gradient  \cite{simoncini2023analysis} and the references within. For tensor equations, there are various versions of low rank Krylov solvers, presented in \cite{kressner2010krylov, kressner2011low, tobler2012low}, 
and tensor train Generalized Minimal Residual Method (GMRES) appear in \cite{dolgov2013tt, coulaud2022robust}. In particular, \cite{rodgers2023implicit} presented the tensor train-GMRES or hierarchical Tucker-GMRES for the implicit discretization of nonlinear tensor differential equations. There are several challenges associated with low rank Krylov methods. Low rank truncation results in loss of orthogonality in the Krylov subspace \cite{palitta2021convergence, simoncini2023analysis} which makes analysis of convergence difficult. Also the key in achieving computational efficiency is to find a  preconditioner to achieve fast convergence  to avoid excessive growths of the numerical ranks during intermediate iterations \cite{kressner2011low}. Without a good preconditioner, the intermediate rank can easily explode which defeats the purpose  of using low rank approximations.
Note that preconditioners for low rank methods are only allowed to operate on the low rank factors of the solution \cite{grasedyck2013literature,bachmayr2023low}. A prominent technique is to write the approximate inverse as sum of Kronecker products, for example as in the popular exponential sum (ES) preconditioner \cite{hackbusch2006low,hackbusch2006low2}. Another popular methos is the multigrid method, which was used in  e.g. \cite{ballani2013projection, tobler2012low}.  Some other ideas include low-rank tensor diagonal preconditioners for wavelet discretizations  \cite{bachmayr2012adaptive} and references mentioned within \cite{grasedyck2013literature,bachmayr2023low}.

An important class of methods for computing low rank solutions to time-dependent problems is based on the dynamic low rank approximation (DLRA) \cite{koch}. There, by projecting the equation onto the tangent space of the low rank manifold, we can constrain the solution to the fixed rank manifold. In the work of Ceruti et al. \cite{ceruti2022unconventional}, the authors proposed the basis update and Galerkin (BUG) method to implement DLRA. This method is shown to have many good properties compared to earlier versions of DLRA schemes, and have been applied and generalized to many applications. However, it is limited to first order in time and the DLRA framework in general incurs the modeling error (the error due to the tangent projection) which cannot be estimated or bounded \cite{mBUG,lam2024randomized}.

This work is motivated by the success of the BUG method as a DLRA for time-dependent problems. We propose to use BUG as a novel preconditioner for low rank GMRES method. This preconditioner in itself can be easily defined according to the BUG procedure, resulting in a non-standard nonlinear preconditioner. We discuss how to effectively incorporate this preconditioner in various types of implicit high order time steppers and how to choose the optimal parameter in its implementations. We verify the performance of the preconditioner by comparing it to the ES preconditioner, and propose a hybrid version of them. Numerical examples including the chanllenging high contrast, anisotropic equations are considered. 

The rest of the paper is organized as follows: in Section \ref{sec:lrgmres}, we review the low rank GMRES method in its various form. Section \ref{sec:} reviews the  BUG method. In Section \ref{sec:new}, we present the new BUG  preconditioner and its variants. Section \ref{sec:numerical} contains simulation results. We conclude the paper in Section \ref{sec:conclude}.

\section{Review:  low rank GMRES}
\label{sec:lrgmres}
In this section, we will review the low rank GMRES method for solving matrix equation \eqref{eq:lme}. 
The idea is simple, to equip the well-established method GMRES method \cite{saad1986gmres} by the low rank truncation.
Note that there are several variants of low rank GMRES  method \cite{ballani2013projection, dolgov2013tt,coulaud2022robust}. Our presentation closely follows the recent work in \cite{coulaud2022robust} for tensor train-GMRES.

\subsection{GMRES}

We begin with the review of the Modified Gram-Schmidt GMRES (MGS-GMRES) for solving linear systems $Ax=b$ with $A \in \mathbb{R}^{n\times n}$, $x \in \mathbb{R}^{n}$, and $b \in \mathbb{R}^{n}$. 
Starting from the initial guess $x_0$, MGS-GMRES constructs a series of approximations $x_k = x_0 + r_k$ in Krylov subspace 
$$
r_k = \mbox{argmin}_{r \in \mathcal{K}_k(A,r_0) } \|r_0- Ar\|, 
$$
with $r_0 = b - Ax_0$ and 
$$
\mathcal{K}_k(A,r_0) = \mbox{span} \left\{ r_0, Ar_0, \cdots, A^{k-1} r_0  \right\}.
$$
To find the minimal residual, a matrix $V_k = [v_1,\cdots, v_k] \in \mathbb{R}^{n\times k}$ with orthogonal columns and an upper Hessenberg matrix $\overline{H}_k \in \mathbb{R}^{(k+1)\times k}$ are iteratively constructed using the Arnoldi procedure such that $\mbox{span} {V_k} = \mathcal{K}_k(A,r_0)$ and
$$
AV_k = V_{k+1} \overline{H}_k, \quad \mbox{with} \quad V^T_{k+1} V_{k+1}  = I_{k+1}.
$$
To proceed, one finds $x_k = x_0 + r_k$ where $r_k = V_k y_k$ and
$$
y_k = \mbox{argmin}_{y \in \mathbb{R}^k} \|\beta e_1- \overline{H}_k y \|, 
$$
where $\beta = \|r_0\|$ and $e_1 = (1,0,\cdots,0)^T \in \mathbb{R}^{k+1}$ so that 
$$
\| \beta e_1- \overline{H}_k y_k\| = \|Ar_k - r_0\| = \|b-Ax_k\|.
$$

For the stopping criteria, we follow the the norm-wise backward error for both vector and tensor  linear systems \cite{coulaud2022robust}
$$
\eta_{A,b}(x_k) = \frac{\|Ax_k - b\|}{\|A\|_2\|x_k\|+\|b\|}.
$$ 
Now we summarize the MGS-GMRES algorithm in \Cref{Algorithm: MGS-GMRES}.
\begin{algorithm} 
	\caption{MGS-GMRES}\label{Algorithm: MGS-GMRES}
\begin{algorithmic}[1]
\Require $A$, $b$, $x_0$, GMRES tolerance $\delta$,  maximal number of iterations $m$
\Ensure Approximate solution $x_m$
\State $r_0 = b - A x_0$, $\beta = \|r_0\|$, $v_1 = r_0/\beta$.
\For{$k=1,2,\cdots,m$} 
 \State $w = A v_k$
    \For{$i=1,2,\cdots,k$} 
    \State $\overline{H}_{i,k} = \langle v_i,w \rangle$
    \State $w =w - \overline{H}_{i,k} v_i$
 \EndFor
 \State $\overline{H}_{k+1,k} = \|w\|$
\State $v_{k+1} = w/\overline{H}_{k+1,k}$
\State $y_k = {\rm argmin}_{y\in \mathbb{R}^k} \| \beta e_1 - \bar{H}_k y \|$
\State $x_k = x_0+V_k y_k$
		\If {$\eta_{A,b}(x_k)\le \delta$}
        \State Break
        \EndIf
\EndFor 
\end{algorithmic}
\end{algorithm} 

\subsection{Low rank GMRES}
The idea of the low rank GMRES (lrGMRES) for matrix linear systems to equip   MGS-GMRES with the low rank truncation. In the following, we review the lrGMRES for \eqref{eq:lme}.

The main idea is to keep every low rank matrix   in its singular value decomposition (SVD) form in  all computations during lrGMRES, 
$$
X = USV^T,
$$
where $[U,S,V]$ is the SVD of $X \in \mathbb{R}^{m_1 \times m_2}$ where $U \in \mathbb{R}^{m_1 \times r}$, $S \in \mathbb{R}^{r \times r}$, and $V \in \mathbb{R}^{m_2 \times r}$. In particular, we present in  \Cref{Algorithm: lrGMRES} the computation of  $X=U_xS_xV_x^T$ to
$$
\mathcal{A}(U_xS_xV_x^T) = U_bS_bV_b^T,
$$
with a given linear operator $\mathcal{A}: \mathbb{R}^{m_1\times m_2} \mapsto \mathbb{R}^{m_1\times m_2}$, and right hand side $b \in \mathbb{R}^{m_1\times m_2}$ in SVD form $b = U_bS_bV_b^T$.  Compared to \Cref{Algorithm: MGS-GMRES},  the main feature is the use of a low rank truncation $\mathcal{T}^{sum}_\epsilon$ (which is defined in Algorithm \ref{Algorithm: trunc_sum}) to prevent   rank explosion during iterations. 
  This operation is also called ``rounding" in the tensor literature. In  \Cref{Algorithm: trunc_sum}, $\mathcal{T}^{svd}_\epsilon$ simply performs a truncated SVD: for any matrix $A=USV^T$ with $S = {\rm diag}(\sigma_1,\dots,\sigma_s)$, then $\mathcal{T}^{svd}_\epsilon (A) = U[:,1:r] {\rm diag}(\sigma_1,\dots,\sigma_r) U[:,1:r]^T$ such that $\left(\sum_{j=r+1}^s \sigma_j^2\right)^{1/2} \le \epsilon$. The rounding tolerance $\epsilon$ has to be chosen smaller than or equal to the lrGMRES tolerance $\delta$ according to \cite{coulaud2022robust}. It was shown in \cite{coulaud2022robust} that this lrGMRES is backward stable for tensor linear systems.

\begin{algorithm} 
	\caption{lrGMRES}\label{Algorithm: lrGMRES}
	\begin{algorithmic}[1] 
        \Require{linear map $\mathcal{A}$, $b$ in SVD form $b=U_bS_bV_b^T$, initial guess in SVD form $x_0 = U_{x0}S_{x0}V_{x0}^T$, rounding tolerance $\epsilon$, GMRES tolerance $\delta$, maximal number of iterations $m$}
         \Ensure{$[U_{x}, S_{x}, V_{x}, {\rm hasConverged}] = \mbox{lrGMRES}(\mathcal{A},U_{x0}, S_{x0}, V_{x0},U_{b}, S_{b}, V_{b},\epsilon,\delta,m)$}
         
        \State $[U_{r0}, S_{r0}, V_{r0}] = \mathcal{T}^{sum}_\epsilon \left(b- \mathcal{A}(U_{x0}S_{x0}V_{x0}^T) \right)$, $\beta = \|S_{r0}\|$, $U_{1}= U_{r0}$, $ S_1= S_{r0}/\beta$, $ V_1= V_{r0}$.
		\For {$k=1,\dots, m$}
		\State $[U_{w}, S_{w}, V_{w}] = \mathcal{T}^{sum}_\epsilon\left( \mathcal{A}(U_kS_kV_k^T) \right)$
		\For {$i=1,\dots,k$}
		\State $\overline{H}_{i,k} = \langle U_{i}S_{i}V^T_{i},U_{w}S_{w}V^T_{w}\rangle$
        \State $[U_{w}, S_{w}, V_{w}] = \mathcal{T}^{sum}_\epsilon\left( U_{w}S_{w}V^T_{w} -\overline{H}_{i,k} U_{i}S_{i}V^T_{i}\right)$
		\EndFor
        \State $[U_{w}, S_{w}, V_{w}] = \mathcal{T}^{sum}_\epsilon\left( U_{w}S_{w}V^T_{w}\right)$
		\State $\overline{H}_{k+1,k} = \|U_{w}S_{w}V^T_{w}\|$
        \State $U_{k+1} = U_w$, $S_{k+1} = S_w/\overline{H}_{k+1,k}$, $V_{k+1} = V_w$
        \State $y_k = {\rm argmin}_{y \in \mathbb{R}^k} \| \beta e_1 - \bar{H}_k y \| $
        \State ${[U_{x}, S_{x}, V_{x}]} = \mathcal{T}^{sum}_\epsilon\left( U_{x0}S_{x0}V_{x0}^T + \sum_{j=1}^k y_k(j) U_jS_jV_j^T \right)$
		\If {$\eta_{\mathcal{A},b}({U_{x}S_{x}V^T_{x}})\le \delta$}
        \State hasConverged = True
        \State Break 
        \EndIf
		\EndFor
	\end{algorithmic} 
\end{algorithm}

\begin{algorithm}
 \caption{Truncation sum of low rank matrices $(U_j,S_j,V_j)_{j=1}^m \mapsto U,S,V$ \label{Algorithm: trunc_sum}} 
    \begin{algorithmic}[1]
    \Require{low rank matrices in the form $U_jS_jV^T_j, j =1,\dots,m$ with each $S_j$ being a low rank diagonal matrix, rounding tolerance $\epsilon$}
    \Ensure{$USV^T = \mathcal{T}^{sum}_\epsilon(\sum_{j=1}^m U_jS_jV^T_j)$}
    
    \State Form $U=[U_1,\dots ,U_m]$, $S={\rm diag}(S_1,\dots,S_m)$, $V=[V_1,\dots,V_m]$
    \State Perform column pivoted QR: $[Q_1,R_1,\Pi_1] = {\rm qr}(U)$, $[Q_2,R_2,\Pi_2] = {\rm qr}(V)$
    \State Compute the truncated SVD: $\mathcal{T}^{svd}_\epsilon (R_1\Pi_1S\Pi_2^TR_2^T) = USV$
    \State Form $U \leftarrow Q_1 U$, $V \leftarrow Q_2 V$
 	\end{algorithmic} 
\end{algorithm}

In practice, restart is needed to control the memory of the solver, i.e. maximal Krylov rank. The restarted lrGMRES is presented in \Cref{Algorithm: restarted lrGMRES}.

\begin{algorithm}
	\caption{Restarted lrGMRES}\label{Algorithm: restarted lrGMRES}
	\begin{algorithmic}[1] 
        \Require{linear map $\mathcal{A}$, $b$ in SVD form $b=U_bS_bV_b^T$, initial guess in SVD form $x_0 = U_{x0}S_{x0}V_{x0}^T$, rounding tolerance $\epsilon$, GMRES tolerance $\delta$, maximal number of iterations $m$,   restart parameter $\mbox{maxit}$}
         \Ensure{$[U_{x}, S_{x}, V_{x}, \mbox{hasConverged}] = \mbox{rlrGMRES}(\mathcal{A},U_{x0}, S_{x0}, V_{x0},U_{b}, S_{b}, V_{b},\epsilon,\delta,m,\mbox{maxit})$}
         \State hasConverged = False, $i =1$, $[U_{x}, S_{x}, V_{x}] = [U_{x0}, S_{x0}, V_{x0}]$
         \While{hasConverged = False $\&$ $i\le\mbox{maxit}$}
            \State $[U_{x}, S_{x}, V_{x}, \mbox{hasConverged}] = \mbox{lrGMRES}(\mathcal{A},U_{x}, S_{x}, V_{x},U_{b}, S_{b}, V_{b},\epsilon,\delta,m)$
            \State $i=i+1$
         \EndWhile
	\end{algorithmic} 
\end{algorithm} 

\subsection{Preconditioner}
\label{sec:es}
To speed up the convergence of lrGMRES and limit the intermediate rank growth, we consider the preconditioned lrGMRES with a right preconditioner $\mathcal{M}$. Namely, we solve $\mathcal{A} \mathcal{M} t=b$ and let $X=\mathcal{M} t.$ The preconditioned algorithms are detailed in \Cref{Algorithm: MGlrGMRES} and \Cref{Algorithm: restarted MGlrGMRES}. 

\begin{algorithm}
	\caption{Preconditioned lrGMRES}\label{Algorithm: MGlrGMRES}
	\begin{algorithmic}[1] 
        \Require{linear map $\mathcal{A}$, $b$ in SVD form $b=U_bS_bV_b^T$, initial guess in SVD form $x_0 = U_{x0}S_{x0}V_{x0}^T$, rounding tolerance $\epsilon$, GMRES tolerance $\delta$, maximal number of iterations $m$, preconditioner $\mathcal{M}$}
         \Ensure{$[U_{x}, S_{x}, V_{x}, {\rm hasConverged}] = \mbox{plrGMRES}(\mathcal{A},\mathcal{M},U_{x0}, S_{x0}, V_{x0},U_{b}, S_{b}, V_{b},\epsilon,\delta,m)$}
         
        \State $[U_{r0}, S_{r0}, V_{r0}] = \mathcal{T}^{sum}_\epsilon \left(b- \mathcal{A}(U_{x0}S_{x0}V_{x0}^T) \right)$, $\beta = \|S_{r0}\|$, $U_{1}= U_{r0}$, $ S_1= S_{r0}/\beta$, $ V_1= V_{r0}$.
		\For {$k=1,\dots, m$}
		\State $[U_{w}, S_{w}, V_{w}] = \mathcal{T}^{sum}_\epsilon\left( \mathcal{A}\mathcal{M}(U_kS_kV_k^T) \right)$
		\For {$i=1,\dots,k$}
		\State $\overline{H}_{i,k} = \langle U_{i}S_{i}V^T_{i},U_{w}S_{w}V^T_{w}\rangle$
        \State $[U_{w}, S_{w}, V_{w}] = \mathcal{T}^{sum}_\epsilon\left( U_{w}S_{w}V^T_{w} -\overline{H}_{i,k} U_{i}S_{i}V^T_{i}\right)$
		\EndFor
        \State $[U_{w}, S_{w}, V_{w}] = \mathcal{T}^{sum}_\epsilon\left( U_{w}S_{w}V^T_{w}\right)$
		\State $\overline{H}_{k+1,k} = \|U_{w}S_{w}V^T_{w}\|$
        \State {$U_{k+1} = U_w$, $S_{k+1} = S_w/\overline{H}_{k+1,k}$, $V_{k+1} = V_w$}
        \State $y_k = \mbox{argmin}_{y\in\mathbb{R}^k} \| \beta e_1 - \overline{H}_k y \|$
        \State $[U_{e}, S_{e}, V_{e}] = \mathcal{T}^{sum}_\epsilon\left(  \sum_{j=1}^k y_k(j) U_jS_jV_j^T \right)$
        \State $[U_{Me}, S_{Me}, V_{Me}] = \mathcal{T}^{sum}_\epsilon\left(   \mathcal{M}(U_{e}S_{e}V_{e}^T)\right)$
        \State ${[U_{x}, S_{x}, V_{x}]} = \mathcal{T}^{sum}_\epsilon\left( U_{x0}S_{x0}V_{x0}^T +   U_{Me}S_{Me}V_{Me}^T\right)$
		\If {$\eta_{\mathcal{A},b}(U_{x}S_{x}V^T_{x})\le \delta$}
        \State hasConverged = True
        \State Break
        \EndIf
		\EndFor
	\end{algorithmic} 
\end{algorithm} 

\begin{algorithm}
	\caption{Restarted preconditioned lrGMRES}\label{Algorithm: restarted MGlrGMRES}
	\begin{algorithmic}[1] 
        \Require{linear map $\mathcal{A}$, $b$ in SVD form $b=U_bS_bV_b^T$, initial guess in SVD form $x_0 = U_{x0}S_{x0}V_{x0}^T$, rounding tolerance $\epsilon$, GMRES tolerance $\delta$, maximal number of iterations $m$,   restart parameter $\mbox{maxit}$, preconditioner $\mathcal{M}$}
         \Ensure{$[U_{x}, S_{x}, V_{x}, \mbox{hasConverged}] = \mbox{rplrGMRES}(\mathcal{A},\mathcal{M},U_{x0}, S_{x0}, V_{x0},U_{b}, S_{b}, V_{b},\epsilon,\delta,m,\mbox{maxit})$}

         \State hasConverged = False, $i =1$, $[U_{x}, S_{x}, V_{x}] = [U_{x0}, S_{x0}, V_{x0}]$
         \While{hasConverged = False $\&$ $i\le\mbox{maxit}$}
            \State $[U_{x}, S_{x}, V_{x}, \mbox{hasConverged}] = \mbox{plrGMRES}(\mathcal{A},\mathcal{M},U_{x}, S_{x}, V_{x},U_{b}, S_{b}, V_{b},\epsilon,\delta,m)$
            \State $i=i+1$
         \EndWhile
	\end{algorithmic} 
\end{algorithm}

The design of the preconditioner for low rank scheme is nontrivial. Besides the standard design principle for preconditioners, we require it to be applied in a dimension by dimension fashion, i.e. it should only operator on  low rank factors, with   memory and computational time proportionate to the rank of the intermediate iterates. There are a few such candidates in the literature, among which the most well-knowns are ES preconditioner and multigrid preconditioner. Due to the strong dependence on rank \cite{hackbusch2015solution} the multigrid   preconditioner for low rank methods is not as effective as its counterpart for full rank methods. In this paper, we choose to benchmark our method with the ES preconditioner. The details of the ES preconditioner is provided in the Appendix.

\section{Review:  basis update and Galerkin (BUG) method}
\label{sec:}

In this section, we review an alternative approach for time discretizations of \eqref{eqn:mode} in low rank format introduced in \cite{ceruti2022unconventional}, the BUG method.  This scheme is based on DLRA \cite{koch} or Dirac–Frenkel time-dependent variational principle. It was designed to capture a fixed rank solution by projecting the equation onto the tangent space of the fixed rank low rank manifold. In particular, the DLRA solves
\begin{equation}
\label{eq:DLRAF}
\frac{d}{dt}X(t)=\Pi_{X(t)} F(X(t), t),  
\end{equation}
where $\Pi_{X(t)}$ is the orthogonal projection onto the tangent space $T_{X(t)}\mathcal{M}_r$ of the the rank $r$ matrix manifold
$\mathcal{M}_r=\{X \in  \mathbb{R}^{m_1\times m_2}, {\sf rank}(X)=r\}$ at $X(t).$   
For completeness, we include a description of the BUG method for solving \eqref{eqn:mode} in Algorithm \ref{algo:BUG}. In Algorithm \ref{algo:BUG},  first, two subproblems that come from the projector splitting of DLRA \cite{lubich2014projector} are solved in the K- and L-step, identifying the row and column  spaces.     Then, a Galerkin evolution step is performed in the resulting space, searching for the optimal solution with the Galerkin condition. 
 This method is   first order in time and fixed in rank. Later improvements to the method includes the rank adaptive version \cite{ceruti2022rank}, and
second or higher order variations \cite{Nakao:2023aa,Ceruti2024}. The BUG method performs well in many applications. However, it suffers from the modeling error from the tangent projection \cite{mBUG} and this error can not be bounded or estimated \cite{mBUG, lam2024randomized}. 

\begin{algorithm}
\begin{algorithmic}[1] 
    \Require{numerical solution at $t^n:$  rank $r$ matrix $\hat{X}^n$ in its SVD form $U^n \Sigma^n (V^n)^T.$}
    \Ensure{numerical solution at $t^{n+1}:$  rank $r$ matrix $\hat{X}^{n+1}$ in its SVD form $U^{n+1} \Sigma^{n+1} (V^{n+1})^T.$}
    	 
	\State {   Prediction.}   K-step and L-step integrating from $t^n$ to $t^{n+1}$.
	
	Solve $$ K^{n+1} - K^{n}=\Delta t F(K^{n+1} (V^n)^T,t) V^n, \quad K^n=U^n \Sigma^n,$$
	to obtain $K^{n+1},$ and $[\tilde{U},\sim,\sim]= {\rm qr}(  K^{n+1}).$
	
	Solve $$ L^{n+1}-L^n=\Delta t F(U^n (L^{n+1})^T,t)^T U^n, \quad L^n=V^n \Sigma^n,$$
	to obtain $L^{n+1},$ and  $[\tilde{V},\sim,\sim]= {\rm qr}( L^{n+1}).$  
	
\State  {  Galerkin Evolution.}  S-step: solve for $S^{n+1} $ from  
$$ S^{n+1}-S^{n}= \Delta t \tilde{U}^T F(\tilde{U} S^{n+1} \tilde{V}^T, t) \tilde{V}, \quad S^n= \tilde{U}^T U^n \Sigma^n (V^n)^T \tilde{V},$$
to obtain $S^{n+1}.$

\State {Output.} $\hat{X}^{n+1}=\tilde{U}S^{n+1} \tilde{V}^T=U^{n+1} \Sigma^{n+1} (V^{n+1})^T.$
 
    \caption{BUG integrator using implicit Euler for $\frac{d}{dt}X=F(X,t)$ \cite{ceruti2022unconventional}.}
    \label{algo:BUG}
    \end{algorithmic}
\end{algorithm}

\section{Proposed  methods: BUG preconditioner and variants}
\label{sec:new}
Motivated by the success of BUG method for many applications, we propose to use it as a preconditioner for low rank GMRES scheme. The low rank feature of BUG method makes it suitable as a preconditioner for low rank schemes. With the GMRES framework, it no long suffers the shortcomings of convergence due to the modeling error. Below, we will describe the BUG preconditioner for various implicit time discretizations. Then we will propose a hybrid preconditioner combining BUG and the standard ES preconditioners.

\subsection{BUG preconditioner and implicit time stepping}
\label{sec:bugp}
Implicit time stepping of the matrix differential equations \eqref{eqn:mode} will yield matrix equations in the form $\mathcal{A}X=b.$ This can be preconditioned by the BUG methodology as described in Algorithm \ref{Algorithm: BUG}. Starting with an input of an initial guess {$U S V^T$} with rank $r$, the preconditioner $\mathcal{M}_{\rm BUG}^{\mathcal{A}, {U, S, V}}$ computes a low rank solution $U_{\rm new} S_{\rm new} V_{\rm new}^T$ with the same rank $r$. We note that in the K-, L- and Galerkin steps, smaller systems (of size $m_1 \times r, m_2 \times r, r \times r$ ) need to be solved respectively.  With the assumption that $r$ being small, and the projected equations being better conditioned, those will be   amenable for fast computations.
We would like to emphasize that this is an unconventional preconditioner because the algorithm depends on the choice of  $U, S, V$, and when used, e.g. in Algorithm \ref{algo:imlmbugc} it is a nonlinear mapping unlike most other preconditioners.

\begin{algorithm}
	\caption{BUG preconditioner for $\mathcal{A}X=b$} \label{Algorithm: BUG}
	\begin{algorithmic}[1]
        \Require{
        linear map $\mathcal{A}$, the right hand side $b$, initial guess in SVD form $U S V^T$}
         \Ensure{$[U_{\rm new},S_{\rm new},V_{\rm new}] = \mathcal{M}_{\rm BUG}^{ \mathcal{A}, U, S, V}(b)$} 
         \State K-step:
         set $K_0 = bV$, solve
         $$
          \mathcal{A} (K_1  V^T)  V = K_0
         $$
         to obtain $[U,\sim,\sim] = {\sf qr}(K_1).$
         \State L-step: set $L_0 = b^T U$, solve
         $$
          \mathcal{A}^T (L_1  U^T)  U = L_0
         $$
        to obtain $[V,\sim, \sim] = {\sf qr} (L_1).$ Here $\mathcal{A}^T (L_1  U^T): = (\mathcal{A} (U L_1^T))^T$).
         \State Galerkin: set $\Sigma_0 = U^T b V$, solve
         $$
         U^T\mathcal{A} (U\Sigma_1 V^T)V = \Sigma_0
         $$
         and obtain $[U_c,S_c,V_c] = {\rm svd} (\Sigma_1)$.
         \State Obtain $U_{\rm new} = UU_c$, $V_{\rm new} = VV_c$, $S_{\rm new} = S_c$.
	\end{algorithmic} 
\end{algorithm} 

In this work, we consider the following time discretizations.  
\begin{itemize}
 \item
\textbf{Implicit midpoint}: in the implicit midpoint method, we solve 
\begin{equation}
\label{eq:cnm}
  X^{n+1} - \Delta t  \left( \theta \sum_{j=1}^s A_j X^{n+1} B_j^T +1/2 \sum_{j=1}^s A_j X^{n } B_j^T + G^{n}\right)= X^n,
\end{equation}
where $   G^n = G(x,y, (t_n + t_{n+1})/2)$.
\item \textbf{$l-$step BDF (backward differentiation formula) time stepping}: the   BDF time stepping leads to 
\begin{equation}
\label{eq:bdfm}
X^{n+1} - \Delta t \, \beta \left( \sum_{j=1}^s A_j X^{n+1} B_j^T +  G^{n+1} \right) =  \sum_{j=0}^l \alpha_j X^{n-j},
\end{equation}
where $\alpha_j, \beta$ are coefficients for BDF scheme.
\item \textbf{Diagonally implicit Runge–Kutta (DIRK) time stepping}: The $l-$ stage DIRK method means we solve from $i=1, \ldots l$ 
\begin{equation}
\label{eq:dirkm}
 X^{(i)} - \Delta t    \left( a_{ii} F(X^{(i)}, t^n+c_i \Delta t) + \sum_{j=1}^{i-1} a_{ij} F(X^{(j)}, t^n+c_j \Delta t) \right) = X^n,
\end{equation}
and
$$
X^{n+1} = X^{n} + \Delta t \sum_{j=1}^l b_j F(X^{(j)}, t^n+c_j \Delta t),
$$
where   the coefficients $a_{ij}, c_j, b_i$ are given by the Butcher tableau, and $F(X ,t)=\sum_{j=1}^s A_j X  B_j^T +G(t).$

\end{itemize}

\begin{algorithm}
	\caption{Implicit Midpoint Method with lrGMRES with BUG preconditioner} 
	\begin{algorithmic}[1]
        \Require{A set of second-order finite difference matrices $\{A_j,B_j\}_{j=1}^s$, time discretization parameters $\Delta t$, $n_t$ and $\theta$, initial condition in SVD form $X_0 = U_0 S_0 V_0^T$,   source $G(t)$, rounding tolerance for lrGMRES $\epsilon$, truncation tolerance $\epsilon_2,$ lrGMRES stopping criteria $\delta$, maximal number of iterations $m$,   restart parameter $\mbox{maxit}$, preconditioner $\mathcal{M}$}
         \Ensure{Solution at final time $[U,S,V]$}
         \State 
         Set the linear map $\mathcal{A}$ by $\mathcal{A}X = X - \Delta t \theta \sum_{j=1}^s A_{j} X B^T_{j} $.
         \State Initialization: $U = U_0$, $S = S_0$, $V = V_0$
         \For {$n=1,\dots, n_t$}
         \State $t = (n-1)\Delta t$
         \State $X \leftarrow USV^T$, $   G_\theta \leftarrow G(\cdot,\cdot, (1-\theta) t + \theta (t+\Delta t))$,
         $
         U_bS_bV_b^T \leftarrow \Delta t  (1-\theta) \sum_{j=1}^s A_j X B_j^T + G^{n}_\theta + X
         $ 
         \State
         $\mathcal{M} \leftarrow \mathcal{M}_{\rm BUG}^{\mathcal{A}, U, S, V}$
         
         \State $[U, S, V, \mbox{hasConverged}] \leftarrow \mbox{rplrGMRES}(\mathcal{A},\mathcal{M},U, S, V,U_{b}, S_{b}, V_{b},\epsilon,\delta,m,\mbox{maxit})$ \label{gmresline}
         \State $[U, S, V] \leftarrow \mathcal{T}^{sum}_{\epsilon_2} ( USV^T)$  \label{truncline}
         \EndFor  
	\end{algorithmic} 
 \label{algo:imlmbugc}
\end{algorithm}

These methods can be readily combined with the low rank GMRES solver with BUG preconditioner, see for example Algorithm \ref{algo:imlmbugc}, where the implicit midpoint method is provided as an example. 
In the following, we would like to discuss several key design features of the BUG preconditioner applied to matrix equations arising from the aforementioned time discretizations. 

First, we discuss  the choice of $U, S, V$ in the BUG preconditioner, and we note that $USV^T$ is also used as the initial guess of the low rank GMRES iteration. The choice is   according to the following.

\begin{itemize}
    \item For the  implicit midpoint schemes, $U S V^T=X^n$ is the numerical solution at time step $n$.
        \item For the BDF scheme, $U S V^T = \sum_{j=0}^l a_j X^{n-j}$ is   a linear combination of computed solutions at previous steps.
    \item For the DIRK scheme, at the $j-$th inner stage, we take $U S V^T=X^{n-1, (j)}$ which is the numerical solution at the $j-$th inner stage of the previous time steps. This choice  is critical for the performance of the DIRK method and will   be studied in detail in Example \ref{ex:dirk}.
\end{itemize}

For the stopping criteria, we follow \cite{coulaud2022robust} and use 
$$
\eta_{\mathcal{A},b}(x_k) = \frac{\|\mathcal{A}x_k - b\|}{\|\mathcal{A}\|_2\|x_k\|+\|b\|} \le \delta,
$$
where $\|\mathcal{A}\|_2$ is estimated as follows. Choose a set $\mathrm{W}$ of normalized matrices $w$ generated randomly from a
normal {and uniform} distribution, we estimate
$$
\|\mathcal{A}\|_2 = \max_{w \in \mathrm{W}} \|\mathcal{A} w\|.
$$
In our algorithm, we choose {$10$  normally distributed matrices and $10$  uniformly distributed matrices}. Such a stopping criteria may ensure a backward stable solution in contrast to using $\eta_{b}(x_k)=\|\mathcal{A}x_k - b\|/\|b\|$ as was pointed out by \cite{coulaud2022robust}.  The stopping parameter of lrGMRES is chosen as $\delta = \epsilon$ which was recommended in \cite{coulaud2022robust}.

Finally, we want to discuss the choice of the rounding tolerance $\epsilon_{\rm }$ in  low rank GMRES. This is chosen according to the local truncation errors of the scheme. %
In Section \ref{sec:numerical}, we provide details for such choice for various kinds of discretizations. Here, let's focus on an example of the application of Algorithm  \ref{algo:imlmbugc} for solving \eqref{eqn:mode} arising from a diffusion type of partial differential equation with second order spatial discretizations. The details of the setup and numerical results are reported in Section \ref{sec:parameterstudy}.  We assume the time step and spatial mesh is on the same order, i.e. $\Delta t=O(h),$ where $h$ is the spatial mesh in both $x$ and $y$ direction.  In this case  the local truncation error is on the order of $\Delta t (\Delta t^2 + h^2) = O(h^3)$. 
A pessimistic estimate of the local truncation error caused by an $\epsilon$ truncation in the solution matrix $X$ is 
$ 
h \epsilon (1+ \Delta t/h^2).
$
Here, we note the $h$ factor is needed to convert   matrix norm to solution norm. This is because for a 2D function $f(x,y)$, the $L^2$ norm $\sqrt{\int (f(x,y))^2 dx dy}=C h \|F\|,$ where the matrix $F_{ij}=F(x_i, y_j).$
Recall that the rounding tolerance $\epsilon$ in the lrGMRES is to make sure that its induced error is on the same order of the local truncation error. %
From $h \epsilon (1+ \Delta t/h^2)=O(h^3),$ we obtain $\epsilon = O(h^3)$. Similarly, we take $\epsilon_2=O(h^2)$ for the truncation parameter in  Line \ref{truncline} in Algorithm \ref{algo:imlmbugc} to match the local truncation error (considering the rescaling of matrix norm and solution norm). 
The $2$-norm of the operator $\mathcal{A}$   is expected to be bounded by $C\frac{\Delta t}{h^2}$ for linear diffusion type equations which are weakly perturbations of isotropic diffusion equation with constant coefficients, since for ${\rm vec}(\mathcal{A} X) = (I\bigotimes I - \frac{1}{2} \Delta t ( I \bigotimes D +  D \bigotimes I )){\rm vec}(X)$, the $2$-norm of the operator $\mathcal{A}$ is bounded by $C\frac{\Delta t}{h^2}$; here $D$ is the second-order finite difference matrix whose condition number scales as $1/h^2$.
With those parameter choices, we have the following results on stability and convergence of the numerical method.

\begin{theorem}[Stability] 
Suppose the matrix differential equation \eqref{eqn:mode} satisfies one-sided Lipschitz condition
$$
\langle F(X,t)-F(Y,t), X-Y \rangle \le \alpha \|X-Y\|^2.
$$
The implicit midpoint method with low rank GMRES scheme as described in Algorithm \ref{algo:imlmbugc} applied to a linear diffusion type problem with the property $\|\mathcal{A}\|_2 \le \frac{C \Delta t}{h^2},$ and mesh size $\Delta t=O(h),$ tolerance $\epsilon=O(h^3), \epsilon_2=O(h^2)$ is stable if Line \ref{gmresline} converges for all time steps, i.e. we have
$$
\|X^n\| \le C \|X^0\|.
$$
Here, the matrix inner product is defined as $\langle A, B \rangle=\sum_{i,j}a_{ij}b_{ij}$ and $\|\cdot\|$ denotes the Frobenious norm. $C$ is a generic constant that does not depend on mesh size $\Delta t, h.$
\end{theorem}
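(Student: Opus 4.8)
The plan is to track how the Frobenius norm of the numerical solution evolves over one time step of Algorithm~\ref{algo:imlmbugc}, and then close a discrete Gr\"onwall-type recursion. Write the exact update before truncation as $\tilde X^{n+1}$, the solution of the linear system $\mathcal{A}\tilde X^{n+1}=b^n$ with right-hand side $b^n = X^n + \Delta t(1-\theta)\sum_j A_j X^n B_j^T + G_\theta^n$, and let $X^{n+1}=\mathcal{T}^{sum}_{\epsilon_2}(\hat X^{n+1})$ where $\hat X^{n+1}$ is what lrGMRES actually returns. There are three sources of perturbation to control: (i) the discretization itself, i.e.\ the stability of the exact implicit-midpoint step for a one-sided Lipschitz $F$; (ii) the lrGMRES backward error, which by the stopping criterion satisfies $\|\mathcal{A}\hat X^{n+1}-b^n\|\le \delta(\|\mathcal{A}\|_2\|\hat X^{n+1}\|+\|b^n\|)$ with $\delta=\epsilon=O(h^3)$; and (iii) the final rounding, $\|X^{n+1}-\hat X^{n+1}\|\le \epsilon_2 = O(h^2)$ (up to the matrix-vs-solution rescaling constant, which is harmless here since we work purely with matrix norms).

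First I would establish the one-step estimate for the exact step. Testing $\tilde X^{n+1}-\Delta t\,\theta\sum_j A_j \tilde X^{n+1}B_j^T = X^n + \Delta t(1-\theta)\sum_j A_j X^n B_j^T + G_\theta^n$ against $\tilde X^{n+1}$ and using the one-sided Lipschitz condition (in the linear case this is coercivity of $-\sum_j A_j(\cdot)B_j^T$ up to the constant $\alpha$, inherited from the spatial discretization of a diffusion operator) gives $\|\tilde X^{n+1}\|\le (1+C\Delta t)\|X^n\| + \Delta t\,\|G_\theta^n\|$ for $\Delta t$ small, with $C$ depending only on $\alpha$. Since $\Delta t = O(h)$, the step amplification is $1+O(h)$. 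Next I would account for the two numerical perturbations: from $\mathcal{A}\hat X^{n+1}=b^n + e^n$ with $\|e^n\|\le \delta(\|\mathcal{A}\|_2\|\hat X^{n+1}\|+\|b^n\|)$, and $\|\mathcal{A}^{-1}\|_2 \le C$ (the implicit-midpoint operator for a diffusion problem is uniformly well-conditioned — indeed $\mathcal{A}=I - \tfrac12\Delta t D$ with $D$ negative (semi)definite, so $\|\mathcal{A}^{-1}\|_2\le 1$), one gets $\|\hat X^{n+1}-\tilde X^{n+1}\| \le C\|e^n\|$. Here the point is that $\delta\|\mathcal{A}\|_2 = O(h^3)\cdot O(\Delta t/h^2) = O(h^2)\to 0$, so this perturbation is absorbed into a term of the form $C h^2(\|\hat X^{n+1}\| + \|X^n\|)$, which for $h$ small can be moved to the left-hand side; adding $\|X^{n+1}-\hat X^{n+1}\|\le \epsilon_2 = O(h^2)$ contributes another $O(h^2)$ absolute term.

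Combining the three, I would obtain a recursion of the shape $\|X^{n+1}\| \le (1 + C\Delta t + Ch^2)\|X^n\| + C h^2 + \Delta t\max_t\|G(t)\|$. Since $h^2 = O(\Delta t\, h) = o(\Delta t)$, the multiplicative factor is $1 + C\Delta t(1+o(1))$ and the additive constant over $O(1/\Delta t)$ steps stays $O(1)$; iterating $n$ times with $n\Delta t \le T$ yields $\|X^n\| \le e^{CT}(\|X^0\| + C) \le C\|X^0\|$ after absorbing the forcing (or, in the homogeneous case $G\equiv 0$, simply $\|X^n\|\le e^{CT}\|X^0\|$), which is the claimed bound. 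The main obstacle I anticipate is bookkeeping the perturbation term correctly: the lrGMRES backward error bound involves $\|\hat X^{n+1}\|$ on its own right-hand side, so one must verify that $C\delta\|\mathcal{A}\|_2 < 1$ (true for $h$ small because it is $O(h^2)$) in order to solve for $\|\hat X^{n+1}\|$ and not lose the estimate to a circular bound; and one must be careful that the hypothesis "Line~\ref{gmresline} converges for all time steps" is exactly what licenses using the stopping-criterion inequality at every step. The remaining work — verifying $\|\mathcal{A}^{-1}\|_2\le C$ for the diffusion-type operator and the elementary discrete Gr\"onwall iteration — is routine.
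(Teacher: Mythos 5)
Your proposal is essentially correct, but it takes a different route from the paper. The paper never splits off the exact linear-system solution and never invokes a bound on $\|\mathcal{A}^{-1}\|_2$: it works directly with the lrGMRES output $\tilde{X}^{n+1}$, writes the residual equation $\tilde{X}^{n+1}-X^n=\Delta t\,F\bigl((X^n+\tilde{X}^{n+1})/2,t^{n+1/2}\bigr)+r^n$ with $\|r^n\|\le\epsilon(\|\mathcal{A}\|_2\|\tilde{X}^{n+1}\|+\|b\|)\le C\Delta t^2(\|\tilde{X}^{n+1}\|+\|X^n\|)$, and then performs a single energy estimate by testing against $\tilde{X}^{n+1}+X^n$ and applying the one-sided Lipschitz condition, which yields $\|\tilde{X}^{n+1}\|\le(1+C\Delta t)\|X^n\|$ without ever needing coercivity/invertibility of $\mathcal{A}$ as a separate ingredient. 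Your route instead converts the backward error into a forward error via $\|\mathcal{A}^{-1}\|_2\le C$; that is workable, but note that your justification (negative semidefiniteness of the discrete diffusion operator) is stronger than the stated hypothesis — the cleaner argument is that the one-sided Lipschitz condition already gives $\langle\mathcal{A}X,X\rangle\ge(1-\alpha\Delta t/2)\|X\|^2$, hence $\|\mathcal{A}^{-1}\|_2\le(1-\alpha\Delta t/2)^{-1}$ for small $\Delta t$. The second real difference is the truncation step in Line \ref{truncline}: you book it as an additive $\epsilon_2=O(h^2)$ perturbation per step, which after summation leaves an additive remainder that cannot strictly be absorbed into $C\|X^0\|$ (your closing "absorb the forcing/constant" step is the one place the argument is loose, for the same reason the bound cannot hold verbatim with a nonzero source unless the forcing contribution is dropped, as the paper implicitly does). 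The paper sidesteps this entirely by observing that $\mathcal{T}^{sum}_{\epsilon_2}$ is a truncated SVD and therefore does not increase the Frobenius norm, $\|X^{n+1}\|\le\|\tilde{X}^{n+1}\|$, so no additive term appears at all; adopting that observation would let your argument close to exactly the stated multiplicative bound. What your approach buys is a more modular perturbation analysis (exact step, solver error, rounding error treated separately), at the price of an extra conditioning estimate and slightly weaker bookkeeping of the truncation; the paper's energy argument is shorter and matches the theorem's conclusion exactly.
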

\begin{proof}
Let's define the intermediate solution from Line \ref{gmresline} in Algorithm \ref{algo:imlmbugc} as $\tilde{X}^{n+1}.$ Then $X^{n+1}=\mathcal{T}^{sum}_{\epsilon_2}(\tilde{X}^{n+1}),$ and $\|X^{n+1}\| \le \|\tilde{X}^{n+1}\|$ due to the property of the truncated SVD.
We denote $r^{n}=\tilde{X}^{n+1}-X^n- \Delta t F\left( (X^n+\tilde{X}^{n+1})/2, t^{n+1/2}\right),$ which is the residual from the low rank GMRES. From the stopping criteria and the fact that it has converged, and $\delta=\epsilon$, we have \begin{equation}
\label{eq:restimate}
\|r^n\| \le \epsilon (\|\mathcal{A}\|_2\|\tilde{X}^{n+1}\|+\|b\|) \le C \Delta t^2 (\||\tilde{X}^{n+1}\|+\|X^n\|),
\end{equation}
where we have used the definition of $b$ from \eqref{eq:cnm}. Now we apply the inner product with $\tilde{X}^{n+1}+X^n$ to the residual equation, and we have
\begin{eqnarray*}
&&\|\tilde{X}^{n+1}\|^2-\|X^n\|^2=\langle \tilde{X}^{n+1}-X^n, \tilde{X}^{n+1}+X^n \rangle \\
&&=
\left \langle \Delta t F\left( (X^n+\tilde{X}^{n+1})/2, t^{n+1/2}\right) +r^n, \tilde{X}^{n+1}+X^n \right \rangle \\
&& \le \alpha \Delta t/2 \|X^n+\tilde{X}^{n+1}\|^2 +\|r^n\| \|X^n+\tilde{X}^{n+1}\| \\
&& \le C \Delta t (\|X^n\|^2+\|\tilde{X}^{n+1}\|^2)
\end{eqnarray*}
where we used \eqref{eq:restimate} in the last line.
Then it is straightforward to see $\|X^{n+1}\| \le\|\tilde{X}^{n+1}\| \le (1+C\Delta t) \|X^n\| $ and the theorem follows.
\end{proof}

\begin{theorem}[Convergence] 
Under the same assumptions as in the previous  theorem,
the implicit midpoint method with low rank GMRES scheme as described in Algorithm \ref{algo:imlmbugc} is convergent of second order, i.e.
\begin{equation}
\label{eqn:errorest}
\|X^n-X(t^n)\| \le C \Delta t.
\end{equation}
with an initial condition satisfying $\|X^n-X(t^n)\| \le C h.$
Here, $X(t^n)$ denotes the exact solution at $t^n.$ $C$ is a generic constant that does not depend on mesh size $\Delta t, h.$ We notice that \eqref{eqn:errorest} implies second order because of the matrix and function norm conversion $\sqrt{\int (f(x,y))^2 dx dy}=C h \|F\|.$
\end{theorem}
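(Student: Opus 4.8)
The plan is to run a standard one-step error recursion: write $X^n - X(t^n) = e^n$, split $e^{n+1}$ into (i) the consistency error of the exact implicit midpoint step, (ii) the GMRES residual contribution $r^n$, and (iii) the truncation error from Line \ref{truncline}, then close the recursion by Gronwall using the one-sided Lipschitz/contractivity structure already exploited in the stability proof. Concretely, let $\hat{X}^{n+1}$ solve the exact implicit midpoint relation $\hat{X}^{n+1} - X^n - \Delta t\, F((X^n + \hat{X}^{n+1})/2, t^{n+1/2}) = 0$ starting from the numerical $X^n$; let $\tilde{X}^{n+1}$ be the output of Line \ref{gmresline} (so $\tilde{X}^{n+1} - X^n - \Delta t\, F((X^n+\tilde{X}^{n+1})/2, t^{n+1/2}) = r^n$, with $\|r^n\|$ controlled by \eqref{eq:restimate}); and let $X^{n+1} = \mathcal{T}^{sum}_{\epsilon_2}(\tilde{X}^{n+1})$, so $\|X^{n+1} - \tilde{X}^{n+1}\| \le \epsilon_2 = O(h^2)$.

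First I would handle the exact-step error $X(t^{n+1}) - \hat{X}^{n+1}$. Subtracting the numerical midpoint relation from the Taylor expansion of the true solution, the truncation error of implicit midpoint is $O(\Delta t^3)$ in the matrix (Frobenius) norm locally; then, using the one-sided Lipschitz condition to absorb the difference of the $F$-evaluations at $(X^n+\hat X^{n+1})/2$ versus $(X(t^n)+X(t^{n+1}))/2$, one obtains $\|X(t^{n+1}) - \hat{X}^{n+1}\| \le (1 + C\Delta t)\|e^n\| + C\Delta t^3$ — this is the classical B-stability argument for the midpoint rule, applied verbatim in matrix form since $\langle\cdot,\cdot\rangle$ is just the vectorized Euclidean inner product. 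Next I would compare $\hat{X}^{n+1}$ with $\tilde{X}^{n+1}$: subtracting their defining relations, pairing with $\hat{X}^{n+1} - \tilde{X}^{n+1}$, and again using one-sided Lipschitz on the $F$-difference gives $(1 - C\Delta t)\|\hat{X}^{n+1} - \tilde{X}^{n+1}\|^2 \le \|r^n\|\,\|\hat{X}^{n+1} - \tilde{X}^{n+1}\|$, hence $\|\hat{X}^{n+1} - \tilde{X}^{n+1}\| \le C\|r^n\|$. Then by stability $\|\tilde{X}^{n+1}\| \le C\|X^0\|$ uniformly, so \eqref{eq:restimate} yields $\|r^n\| \le C\Delta t^2 \cdot C\|X^0\| = O(\Delta t^2) \cdot O(1)$; more precisely $\|r^n\| \le C\Delta t^2(\|\tilde X^{n+1}\| + \|X^n\|) \le C\Delta t^2$. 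Finally the truncation step contributes $\|X^{n+1} - \tilde{X}^{n+1}\| \le \epsilon_2 = O(h^2) = O(\Delta t^2)$ since $\Delta t = O(h)$.

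Assembling by the triangle inequality: $\|e^{n+1}\| \le \|X(t^{n+1}) - \hat X^{n+1}\| + \|\hat X^{n+1} - \tilde X^{n+1}\| + \|\tilde X^{n+1} - X^{n+1}\| \le (1+C\Delta t)\|e^n\| + C\Delta t^3 + C\Delta t^2 + C\Delta t^2 = (1+C\Delta t)\|e^n\| + C\Delta t^2$. Iterating this recursion over $n \le n_t = O(1/\Delta t)$ steps, the discrete Gronwall lemma gives $\|e^n\| \le e^{CT}\big(\|e^0\| + C n_t \Delta t^2\big) \le C(\|e^0\| + \Delta t)$. Since the initial condition satisfies $\|e^0\| = \|X^0 - X(t^0)\| \le C h = O(\Delta t)$, we conclude $\|X^n - X(t^n)\| \le C\Delta t$, which is \eqref{eqn:errorest}; the remark on the $\sqrt{\int (f(x,y))^2\,dx\,dy} = Ch\|F\|$ conversion then upgrades this to genuine second-order accuracy of the approximated PDE solution.

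The routine parts are the Taylor expansion for midpoint consistency and the discrete Gronwall step. The main obstacle — and the place where this proof genuinely relies on the earlier stability theorem rather than being a textbook computation — is ensuring that the GMRES residual bound $\|r^n\| \le C\Delta t^2(\|\tilde X^{n+1}\| + \|X^n\|)$ translates into a clean $O(\Delta t^2)$ local contribution: this requires the a priori uniform bound $\|\tilde X^{n+1}\|, \|X^n\| \le C\|X^0\|$, which is exactly the content of the stability theorem and itself presupposes that Line \ref{gmresline} converges at every step. A secondary subtlety is that $r^n$ is the residual of the untruncated GMRES solve, whereas the quantity fed into the next step is the truncated $X^{n+1}$; keeping the $\epsilon_2$-truncation error as a separate additive $O(h^2)$ term (rather than entangling it with the Lipschitz estimates) is what makes the bookkeeping go through cleanly.
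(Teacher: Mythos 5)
Your proposal is correct and follows essentially the same route as the paper: the paper likewise combines the one-sided Lipschitz energy estimate, the GMRES residual bound \eqref{eq:restimate} (made $O(\Delta t^2)$ via the stability theorem), the separate $\epsilon_2$-truncation term, and a discrete Gronwall step, the only difference being that you insert the intermediate exact midpoint step $\hat{X}^{n+1}$ and split by the triangle inequality, whereas the paper writes a single error equation for $\tilde{e}^{n+1}=\tilde{X}^{n+1}-X(t^{n+1})$ using the linearity of $F$ and runs the energy argument once (on squared norms). One small bookkeeping point: the midpoint consistency defect is $O(\Delta t^2)$, not $O(\Delta t^3)$, in the Frobenius (matrix) norm once the $1/h$ matrix-versus-function norm rescaling is accounted for, as the paper notes; this does not affect your recursion since it already carries a $C\Delta t^2$ term from the residual and truncation contributions.
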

\begin{proof}
We use the same notation as in the proof of the previous theorem. \eqref{eq:restimate} now gives $\|r^n\| \le C \Delta t^2$ using the stability result.

We define the error $e^n=X^n-X(t^n),$ and $\tilde{e}^n=\tilde{X}^n-X(t^n).$ 
For the exact solution, we have $$
X(t^{n+1})-X(t^n)=\Delta t F\left( (X(t^n)+X(t^{n+1}))/2, t^{n+1/2}\right)+G^n,
$$
where $G^n =C \Delta t^2$ from local truncation analysis, $\Delta t=O(h)$ and matrix and solution norm rescaling. By the linearity of $F(\cdot, t),$ we easily get
$$
\tilde{e}^{n+1}-e^n=\Delta t F\left( (e^n+\tilde{e}^{n+1}))/2, t^{n+1/2}\right)+r^n-G^n.
$$ Therefore,
 \begin{eqnarray*}
&&\|\tilde{e}^{n+1}\|^2-\|e^n\|^2=\langle \tilde{e}^{n+1}-e^n, \tilde{e}^{n+1}+e^n \rangle \\
&&=
\left \langle \Delta t F\left( (e^n+\tilde{e}^{n+1})/2, t^{n+1/2}\right) +r^n-G^n, \tilde{e}^{n+1}+e^n \right \rangle \\
&& \le \alpha \Delta t/2 \|e^n+\tilde{e}^{n+1}\|^2 +C\Delta t^2 \|e^n+\tilde{e}^{n+1}\|   \\
&& \le \alpha \Delta t (\|e^n\|^2+\|\tilde{e}^{n+1}\|^2) +\Delta t (\|e^n\|^2+\|\tilde{e}^{n+1}\|^2) + C\Delta t^3.
\end{eqnarray*}
When $\Delta t$ is small enough, we have $\|\tilde{e}^{n+1}\|^2 \le (1+C\Delta t) \|e^n\|^2 +C \Delta t^3.$ Now we use $e^{n+1}=\tilde{e}^{n+1}+X^n-\tilde{X}^n.$ Since $\|X^n-\tilde{X}^n\|\le \epsilon_2 =C \Delta t^2,$ we get $\|e^{n+1}\|^2 \le (1+C\Delta t) \|e^n\|^2 +C \Delta t^3.$ This implies \eqref{eqn:errorest} with the assumption on initial condition.
\end{proof}

We note that the theorems above works for general preconditioners, i.e. BUG and also ES preconditioners. It serves as a guideline and assurance on our parameter choice, which will be numerically verified in Section \ref{sec:numerical}.

\subsection{Hybrid preconditioner}

We  now propose a hybrid preconditioner of the ES and the BUG preconditioner.
The ES preconditioner is a well known preconditioner for low rank iterative solvers. It has a simple and direct construction, which has shown effectiveness in many high dimensional applications for tensor equations \cite{bachmayr2023low}.  The detail of this preconditioner is reported in the Appendix.  The definition of the hybrid preconditioner is straightforward. It alternates between the two preconditioners (e.g. restart=1) which is reported in Algorithm \ref{Algorithm: restarted MGlrGMRES hybrid}. We have also considered other restart parameter, but do not observe significant difference. The main motivation for the hybrid preconditioner is to use the ES to provide a better initial guess for the BUG update and vice versa. We note that this is different from the flexible GMRES framework,   because the BUG preconditioner is nonlinear.

\begin{algorithm}
	\caption{Restarted lrGMRES with hybrid preconditioner}\label{Algorithm: restarted MGlrGMRES hybrid}
	\begin{algorithmic}[1] 
        \Require{linear map $\mathcal{A}$, $b$ in SVD form $b=U_bS_bV_b^T$, initial guess in SVD form $x_0 = U_{x0}S_{x0}V_{x0}^T$, rounding tolerance $\epsilon$, GMRES  tolerance $\delta$, maximal number of iterations $m$, number of restart $\mbox{maxit}$, preconditioner $\mathcal{M}$}
         \Ensure{$[U_{x}, S_{x}, V_{x}, \mbox{hasConverged}] = \mbox{rplrGMRES}(\mathcal{A},\mathcal{M}_{\rm hybrid},U_{x0}, S_{x0}, V_{x0},U_{b}, S_{b}, V_{b},\epsilon,\delta,m,\mbox{maxit})$}
         \State hasConverged = False, $i =1$, $[U_{x}, S_{x}, V_{x}] = [U_{x0}, S_{x0}, V_{x0}]$
         \While{hasConverged = False $\&$ $i\le\mbox{maxit}$}
            \If {$~i == 1~(\mbox{mod } 2)$}
            \State $[U_{x}, S_{x}, V_{x}, \mbox{hasConverged}] = \mbox{plrGMRES}(\mathcal{A},\mathcal{M}_{\rm ES},U_{x}, S_{x}, V_{x},U_{b}, S_{b}, V_{b},\epsilon,\delta,m)$
            \Else {$~i==0~(\mbox{mod } 2)$}
            \State
            $[U_{x}, S_{x}, V_{x}, \mbox{hasConverged}] = \mbox{plrGMRES}(\mathcal{A},\mathcal{M}_{\rm BUG}^{\mathcal{A}, U_x, S_x, V_x},U_{x}, S_{x}, V_{x},U_{b}, S_{b}, V_{b},\epsilon,\delta,m)$
            \EndIf
            \State $i=i+1$
         \EndWhile
	\end{algorithmic} 
\end{algorithm}

Finally, we remark that it is also possible to use the rank adaptive version of BUG method \cite{ceruti2022rank} as a preconditioner. The essential design of the schemes are very similar. We have tried this in numerical tests, and did not find significant advantages. Thus the results are not reported in this paper.

\section{Numerical results}
\label{sec:numerical}
In this section, we test the proposed numerical methods by considering the diffusion equation with variable coefficients 
\begin{eqnarray}  
\label{eq:mastereq}
 \frac{\partial X}{\partial t}  
&=& b_1(y)\frac{\partial}{\partial x}\left(a_1(x)\frac{\partial X}{\partial x}\right) + b_2(y)  \frac{\partial^2 (a_2(x)X)}{\partial x \partial y} \notag \\
&&+ a_3(x)  \frac{\partial^2 (b_3(y)X)}{\partial x \partial y} + a_4(x)\frac{\partial}{\partial y}\left(b_4(y)\frac{\partial X}{\partial y}\right) + G(x, y, t)  
\label{2d variable}
\end{eqnarray}
on the  square domain $[-1,1]^2$ subject to zero Dirichlet boundary conditions and a given initial condition. We discretize the equation by standard finite difference in space and  implicit time integrators with various order. The spatially discretized scheme of \eqref{eq:mastereq} is then an example of \eqref{eqn:mode}.
Let the time grids be
$$
t_k = k\Delta t, \qquad k=0,1,\ldots, n_t,
$$
 for some integer $n_t>0$,
 and the space grids be
 $$
(x_i,y_j) = (-1+i\Delta x, -1+j \Delta y), \qquad  i=1,\ldots, n_x,\quad j=1,\ldots, n_y,
 $$ 
  for some integers $n_x>0$ and $n_y>0$. The unknown is represented by a time dependent matrix $X^n_{i,j} \approx X(x_i,y_j,t_n).$
\subsection{Parameter study for the  BUG preconditioner }
\label{sec:parameterstudy}
In this subsection, we  verify the optimal parameter for the  BUG preconditioner    by numerical experiments.  
We consider \eqref{eq:mastereq} with
\begin{align} \label{coefficient ex__parameter_restart}
a_1(x) &= 1+0.1\sin(\pi x),  &b_1(y) &= 1+0.1\cos(\pi y), \nonumber \\
a_2(x) &= 0.15+0.1 \sin(\pi x), &b_2(y) &= 0.15+0.1 \cos(\pi y),\nonumber \\
a_3(x) &= 0.15+0.1 \cos(\pi x), & b_3(y) &= 0.15+0.1 \sin(\pi y),\nonumber \\
a_4(x)  &= 1+0.1\sin(\pi x), & b_4(y) &= 1 + 0.1 \cos(\pi y),
\end{align}
and the rank 1 manufactured solution given by  
\begin{eqnarray}\label{mms ex__parameter_restart}
X(x,y,t) = 0.1\exp(-x^2/0.15^2)\exp(-y^2/0.15^2)\exp(-t),
\end{eqnarray} 
which will implicit define the forcing term $G(x,y,t).$ The final time is $t_{\rm end} = 0.1 \pi.$
We carry out the computation on four different grids with $n_x = n_y \in \{ 2^5-1, 2^6-1, 2^7-1, 2^{8}-1\}$. The timestep is chosen according to $n_t = \floor{t_{\rm end}/h_x}$ and $ \Delta t = t_{\rm end}/n_t$ with final time $t_{\rm end} = 0.1 \pi$.   
We test with the second order spatial discretization and implicit midpoint method in time with time step choice $\Delta t=O(h).$

The parameters we test include the following:  restart parameter, rounding tolerance $\epsilon$ and stopping criteria.
For the restart parameter,
to control the rank growth, we implement the restarted-lrGMRES where the lrGMRES restart every three iterations  with a maximal total number of iterations as $90$.    We will illustrate by an example    that frequent restarting will actually help bring the iteration numbers for the  preconditioner down. This seems to be a   feature of this nonlinear preconditioner. 

As for the parameter $\epsilon,$ according to the discussions in Section \ref{sec:bugp}, we use  $\epsilon=h^2$  in the lrGMRES and $\epsilon_2=O(h^2)$.
As for the stopping criteria, we follow \cite{coulaud2022robust} and use 
$$
\eta_{\mathcal{A},b}(x_k) = \frac{\|\mathcal{A}x_k - b\|}{\|\mathcal{A}\|_2\|x_k\|+\|b\|} \le \delta,
$$
where $\|\mathcal{A}\|_2$ is estimated as follows. Choose a set $\mathrm{W}$ of normalized matrices $w$ generated randomly from a
normal {and uniform} distribution, we estimate
$$
\|\mathcal{A}\|_2 = \max_{w \in \mathrm{W}} \|\mathcal{A} w\|.
$$
In our algorithm, we choose {$10$  normally distributed matrices and $10$  uniformly distributed matrices}. Such a stopping criteria may ensure a backward stable solution in contrast to using $\eta_{b}(x_k)=\|\mathcal{A}x_k - b\|/\|b\|$ as was pointed out by \cite{coulaud2022robust}.  The stopping parameter of lrGMRES is chosen as $\delta = \epsilon$ which was recommended in \cite{coulaud2022robust}.

The discussions above are the baseline for the ``optimal" parameter choice.
Below, we provide the numerical results based on this choice first. Then we will vary each individual parameter and observe the difference in the numerical performance.

\begin{example}[Optimal parameter] \label{Optimal}
\textnormal{
We   solve \eqref{eq:mastereq} with variable coefficients \eqref{coefficient ex__parameter_restart}  and manufactured solution \eqref{mms ex__parameter_restart}. 
We now provide the numerical results using the optimal parameters mentioned above, namely restart parameter being three, stopping criteria $\eta_{\mathcal{A},b}(x_k) \le \delta, \delta=\epsilon=h^3.$  The second order convergence can be clearly observed in Table \ref{table ex__parameter_restart}. Figure \ref{figure ex__parameter_restart3} displays, at each time step, the solution error, $\eta_{\mathcal{A},b}$, the solution rank, the maximal Krylov rank, and the iteration number.  It is evident that the  preconditioner is effective. The number of iteration is mostly 1 except the initial time step.  The numerical solution is of low rank, and more importantly, the maximal Krylov rank remains low.
}

\textnormal{
We compare lrGMRES with BUG preconditioner and lrGMRES without any preconditioner in Figure \ref{figure ex__parameter_restart3_noprecon}, where we display the history of solution error, iteration number, and maximal Krylov rank. It is evident that the preconditioner is the key to feasible computational cost.
}
\begin{table}[htbp]  
 \begin{center} 
\begin{tabular}{| c | c | c |c|c | c | c | } 
 \hline
 $h$ & error  & order  \\
 \hline
 \hline
   3.12(-2) &  1.06(-4) &  --  \\
 1.56(-2)  &  2.71(-5) &   1.97  \\
  7.81(-3) &  6.78(-6) &  2.00 \\
  3.90(-3) &  1.77(-6) &  1.93 \\
 \hline
 \end{tabular}
 \caption{Example \ref{Optimal}. Solving diffusion equation with variable coefficients \eqref{coefficient ex__parameter_restart}  and manufactured solution \eqref{mms ex__parameter_restart} with BUG preconditioner.  Restart every $3$ iterations. Stopping criteria $\eta_{\mathcal{A},b}(x_k) \le \delta.$ $\delta=\epsilon=h^3.$  For $h = h_x = h_y  \in \{3.12(-2), 1.56(-2), 7.81(-3),  3.90(-3)\}$, this table displays the solution error at the final time and order of convergence. }
 \label{table ex__parameter_restart}
 \end{center} 
 \end{table}

\begin{figure}[htbp] 
\centering
\includegraphics[width=0.32\linewidth]{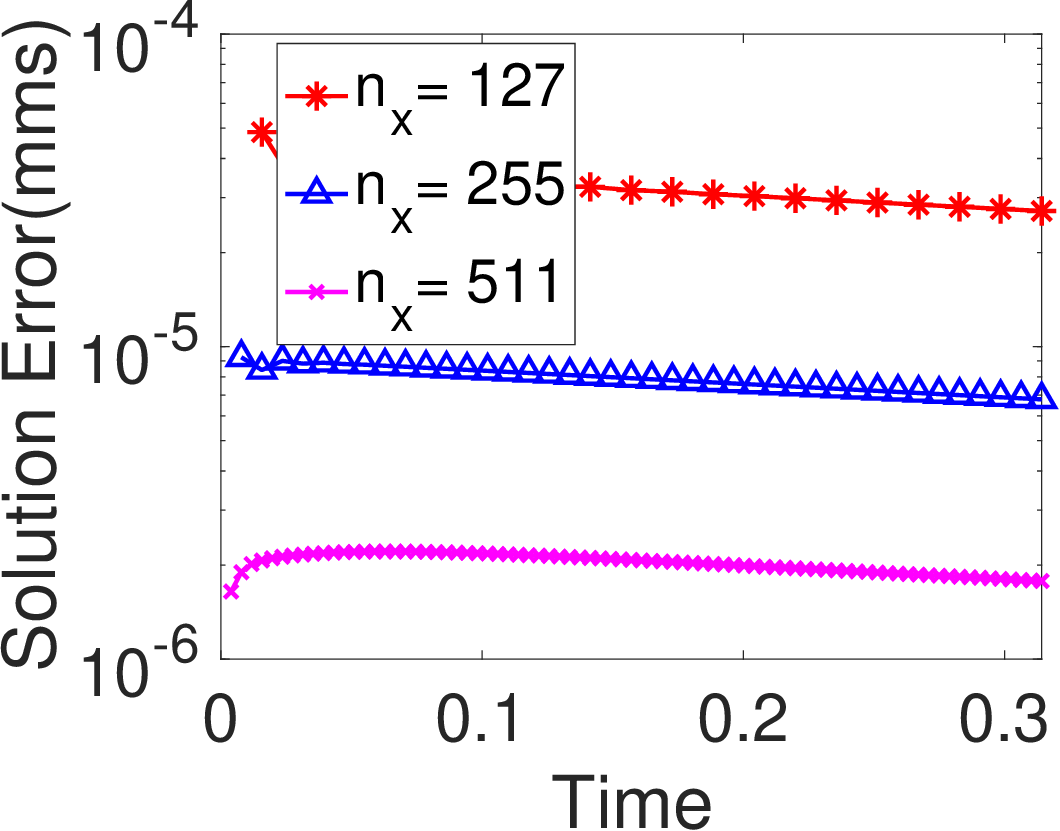} 
\includegraphics[width=0.32\linewidth]{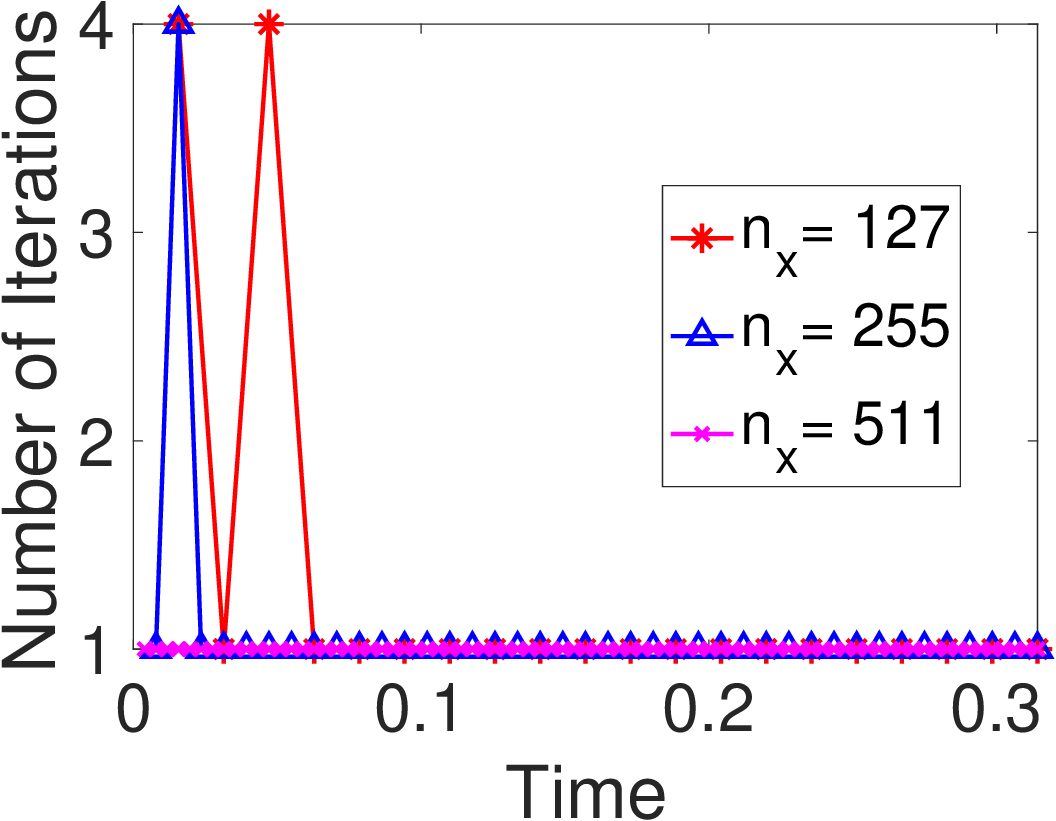} 
\\
\includegraphics[width=0.32\linewidth]{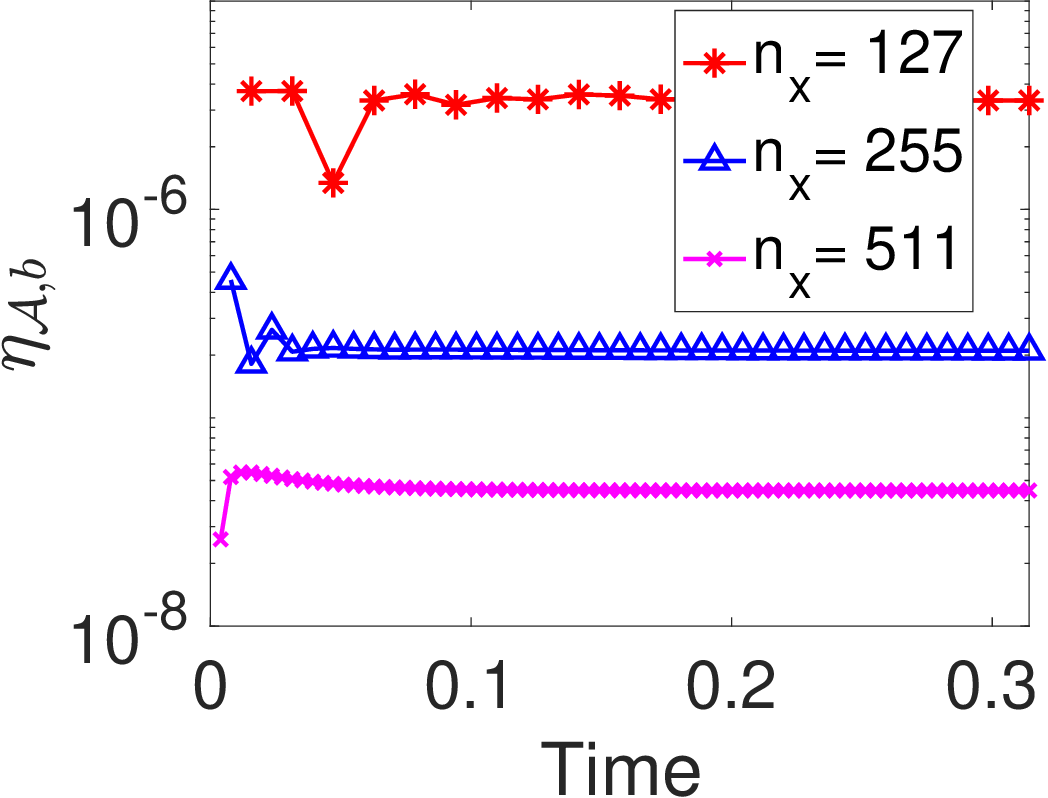} 
\includegraphics[width=0.32\linewidth]{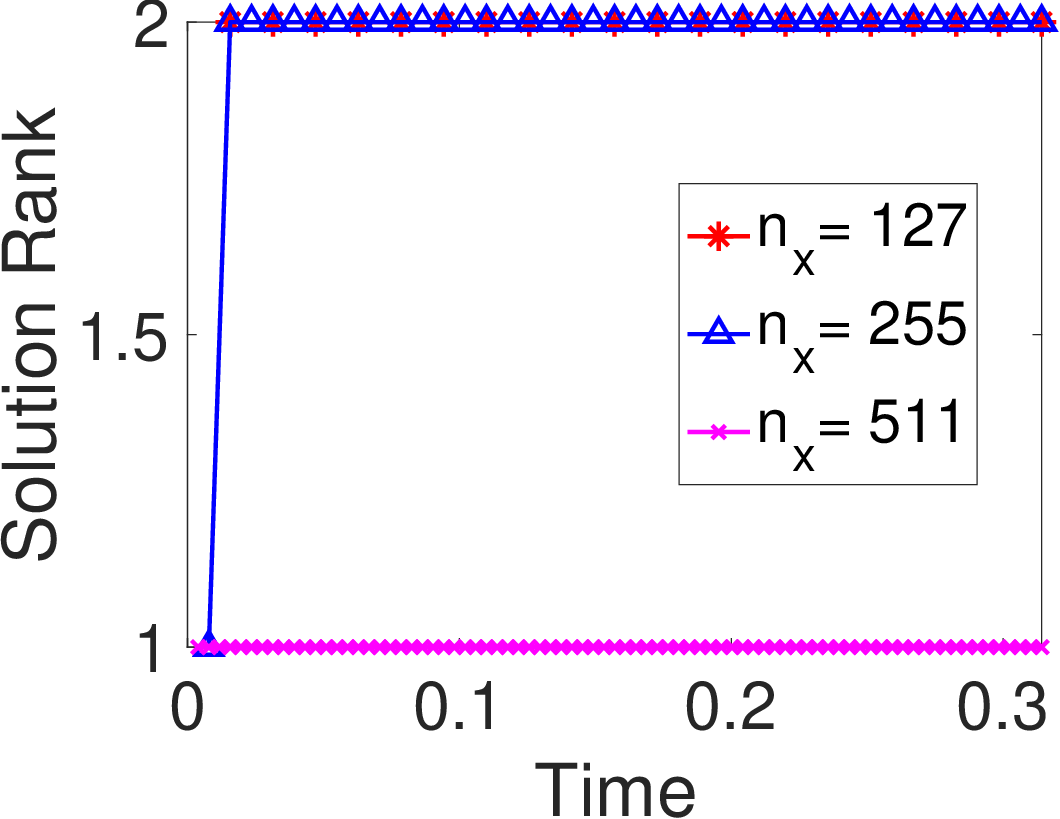} 
\includegraphics[width=0.32\linewidth]{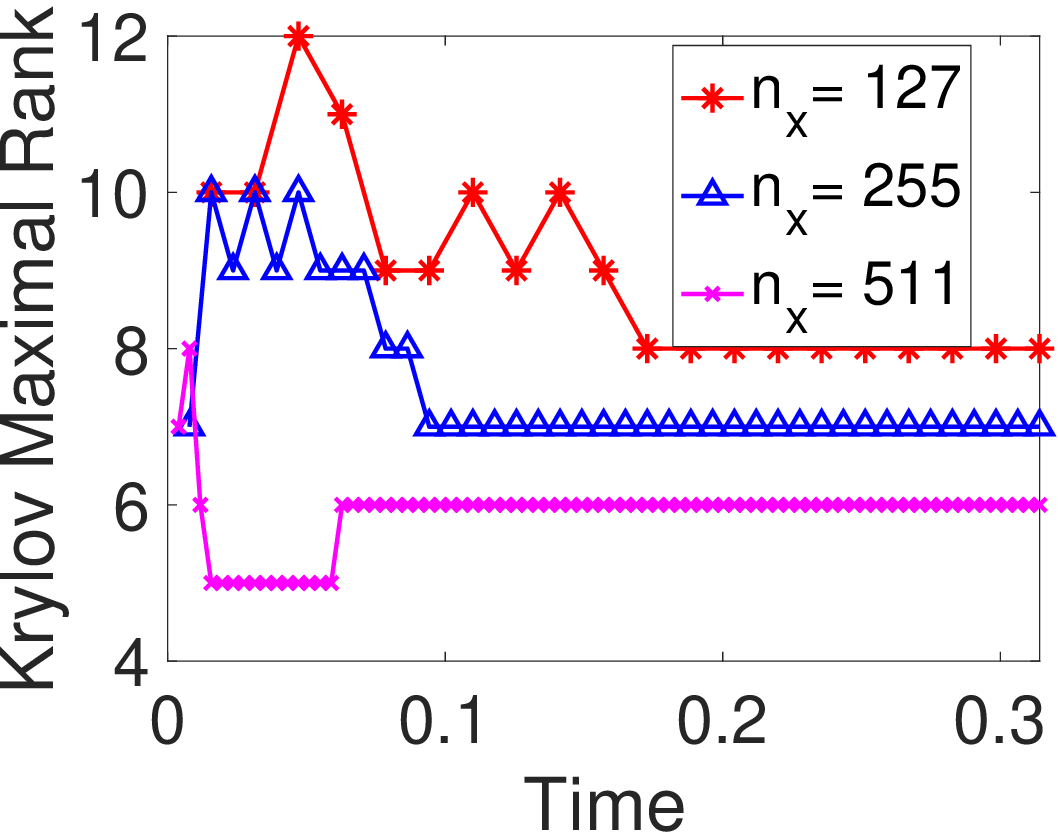} 
\caption{Example \ref{Optimal}. Solving diffusion equation with variable coefficients \eqref{coefficient ex__parameter_restart}  and manufactured solution \eqref{mms ex__parameter_restart} with BUG preconditioner. Restart every $3$ iterations. Stopping criteria $\eta_{\mathcal{A},b}(x_k) \le \delta.$ $\delta=\epsilon=h^3.$ For $h=h_x = h_y  \in \{1.56(-2), 7.81(-3),  3.90(-3)\}$, this figure displays the history of solution error, iteration number, $\eta_{\mathcal{A},b}$, solution rank, and maximal Krylov rank. 
} 
\label{figure ex__parameter_restart3}
\end{figure}

\begin{figure}[htbp] 
\centering
\includegraphics[width=0.32\linewidth]{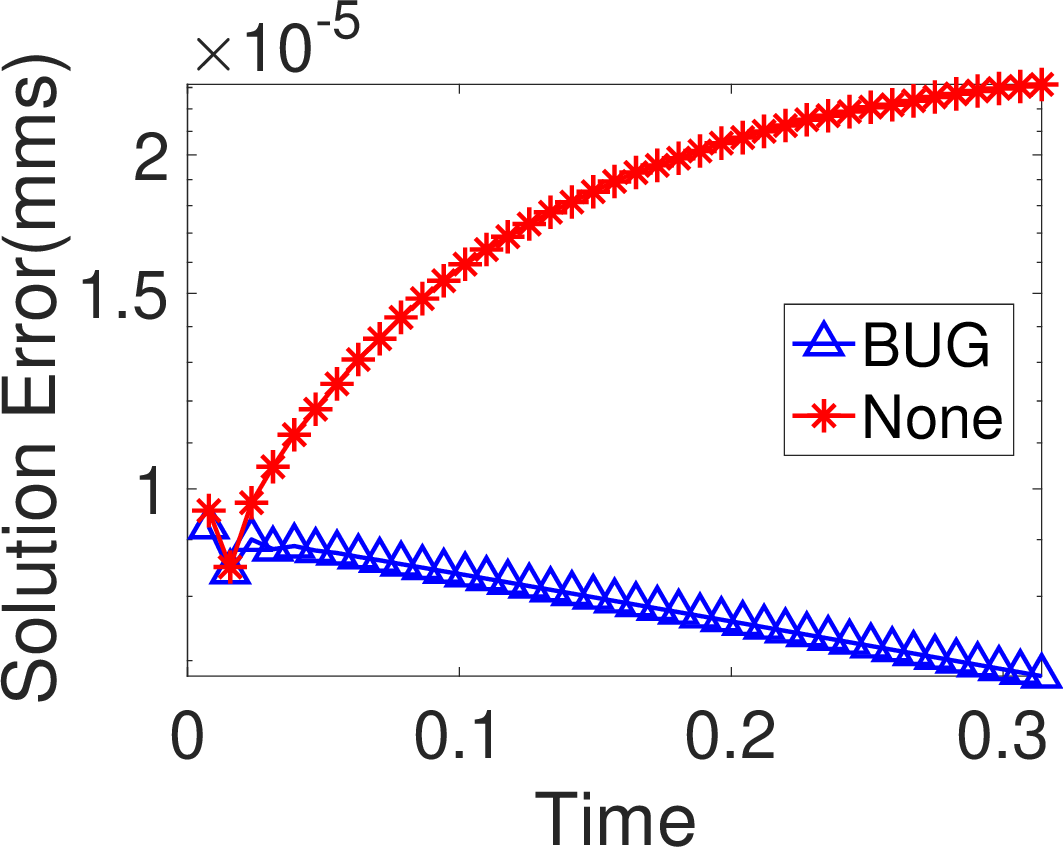} 
\includegraphics[width=0.32\linewidth]{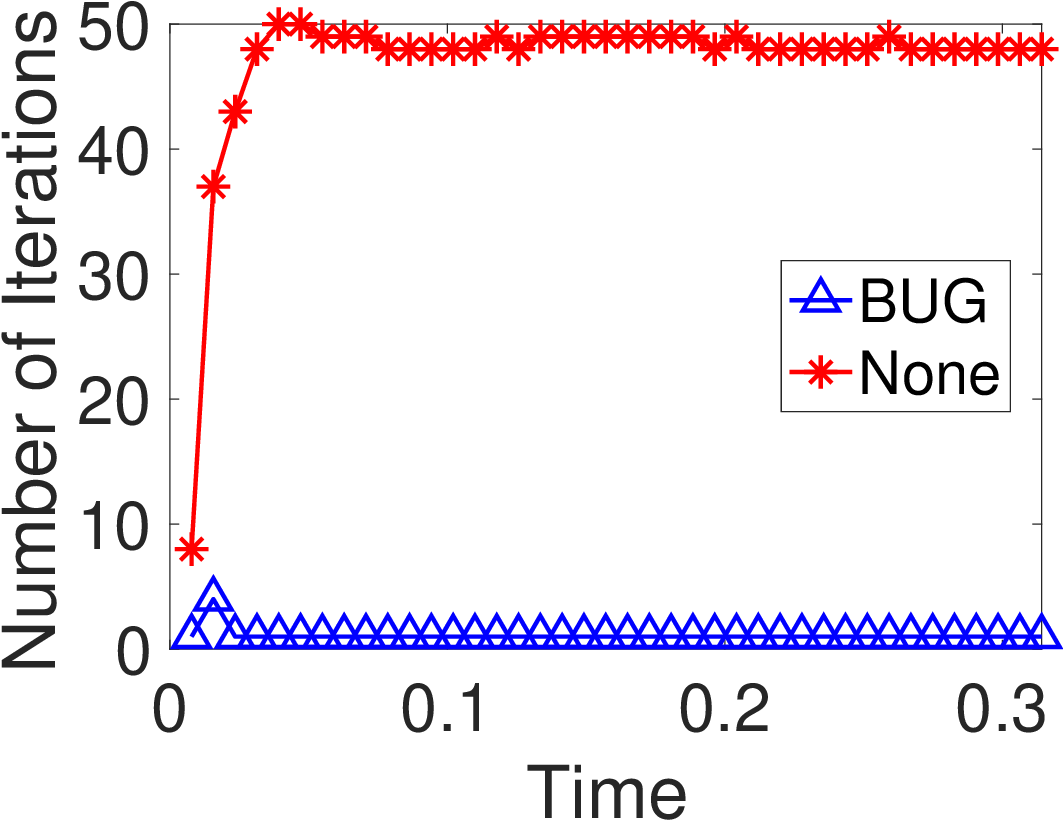} 
\includegraphics[width=0.32\linewidth]{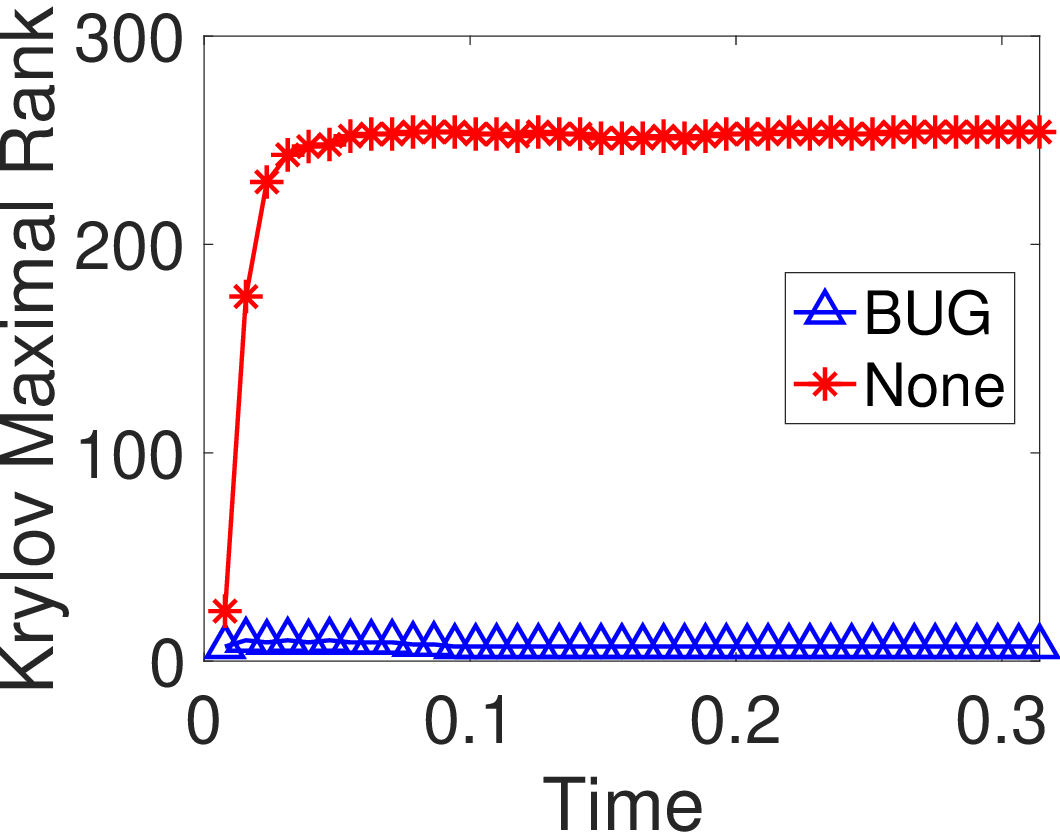} 
\caption{Example \ref{Optimal}. Solving diffusion equation with variable coefficients \eqref{coefficient ex__parameter_restart}  and manufactured solution \eqref{mms ex__parameter_restart} with BUG preconditioner (which restarts every $3$ iterations) and no preconditioner (which restarts every $3$ iterations ``None-3'' and every $25$ iterations ``None-25''). Stopping criteria $\eta_{\mathcal{A},b}(x_k) \le \delta.$ $\delta=\epsilon=h^3.$ For $h=h_x = h_y  \in \{ 7.81(-3)\}$, this figure displays the history of solution error, iteration number, and maximal Krylov rank. 
} 
\label{figure ex__parameter_restart3_noprecon}
\end{figure}

 \end{example}

\begin{example}[Varying the restart parameter] \label{Example restart}
\textnormal{
In this example, we enlarge the restart parameter to 25 while keeping all other parameters unchanged from Example \ref{Optimal}. We solve \eqref{eq:mastereq} with variable coefficients \eqref{coefficient ex__parameter_restart}  and manufactured solution \eqref{mms ex__parameter_restart}. Figure \ref{figure ex__parameter_restart25} displays the corresponding result. %
The iteration numbers increase dramatically for the first time step, and the maximal Krylov ranks also become larger.
This shows that BUG preconditioning benefits from frequent restarting.}

\begin{figure}[htbp] 
\centering
\includegraphics[width=0.32\linewidth]{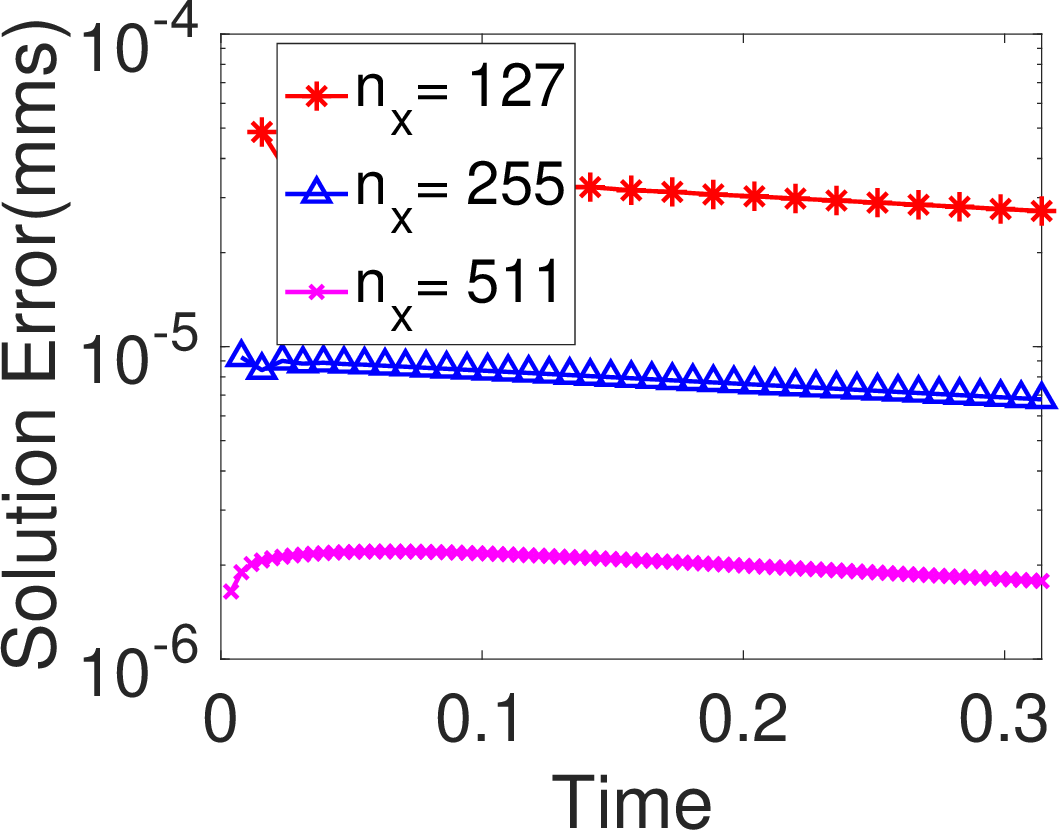} 
\includegraphics[width=0.32\linewidth]{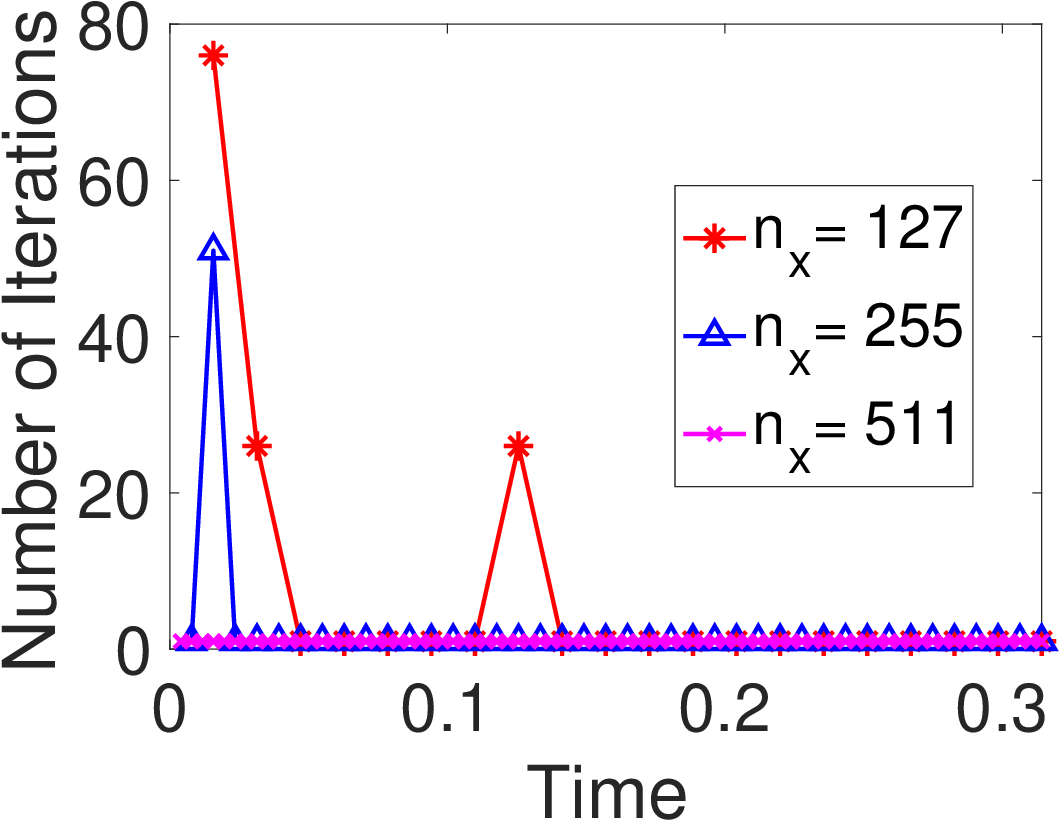} 
\\
\includegraphics[width=0.32\linewidth]{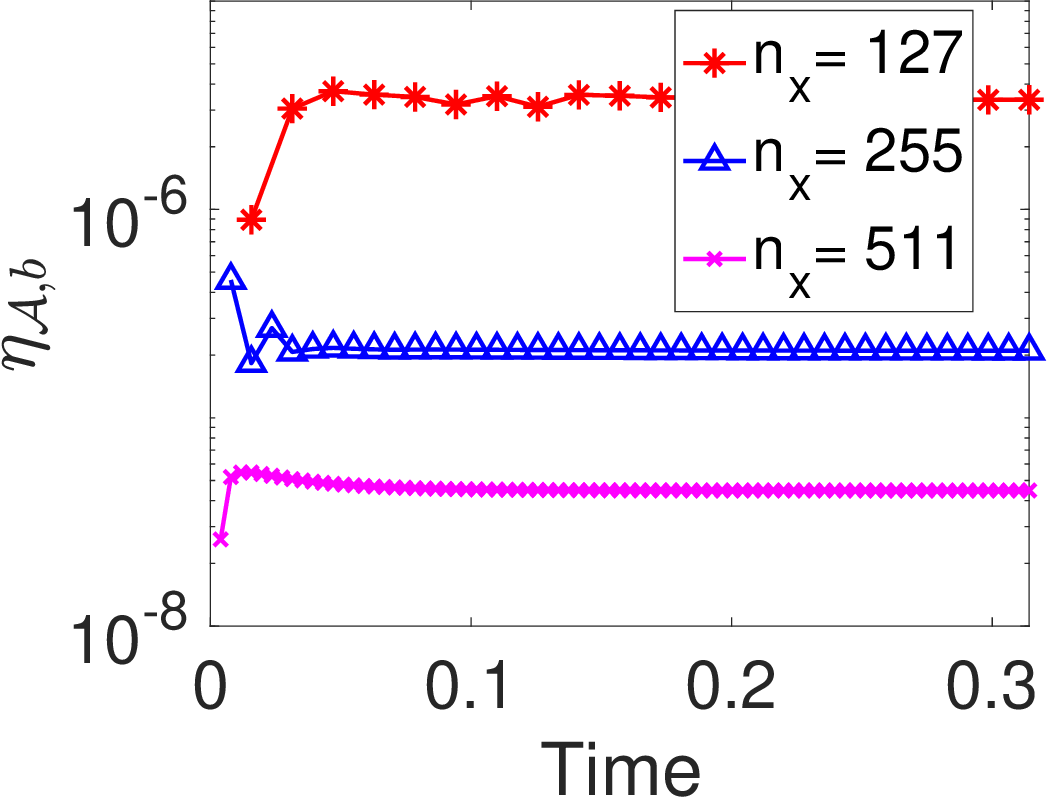} 
\includegraphics[width=0.32\linewidth]{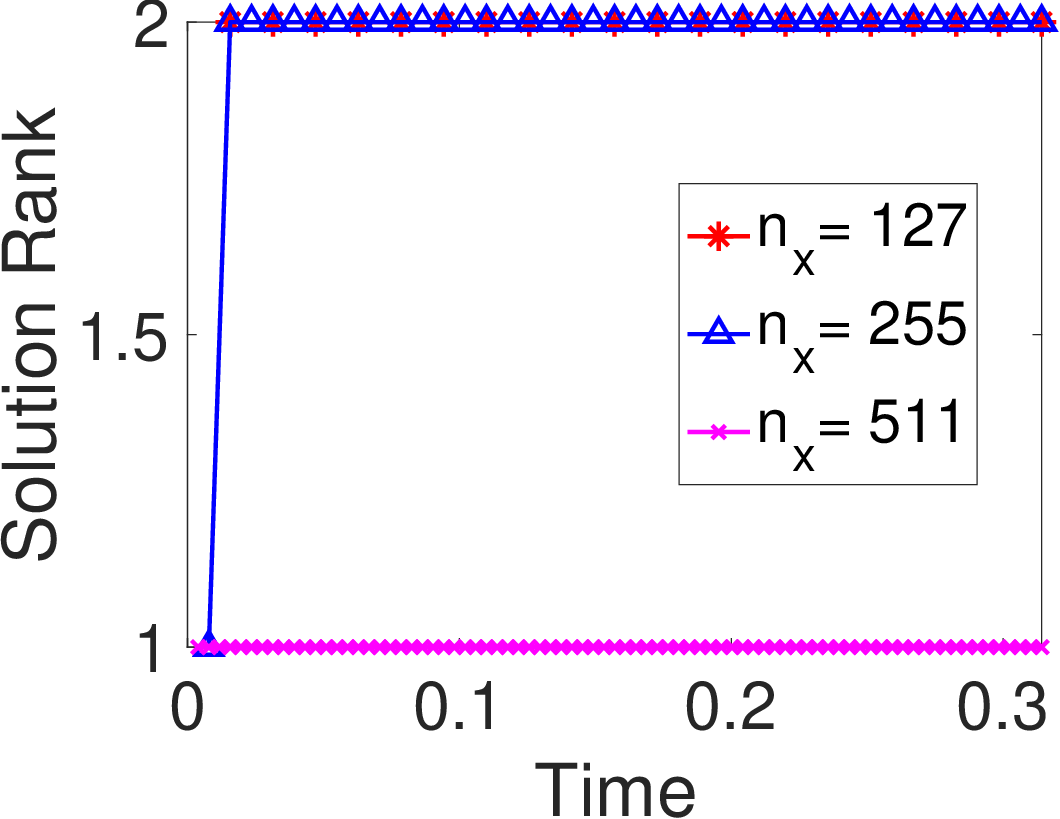} 
\includegraphics[width=0.32\linewidth]{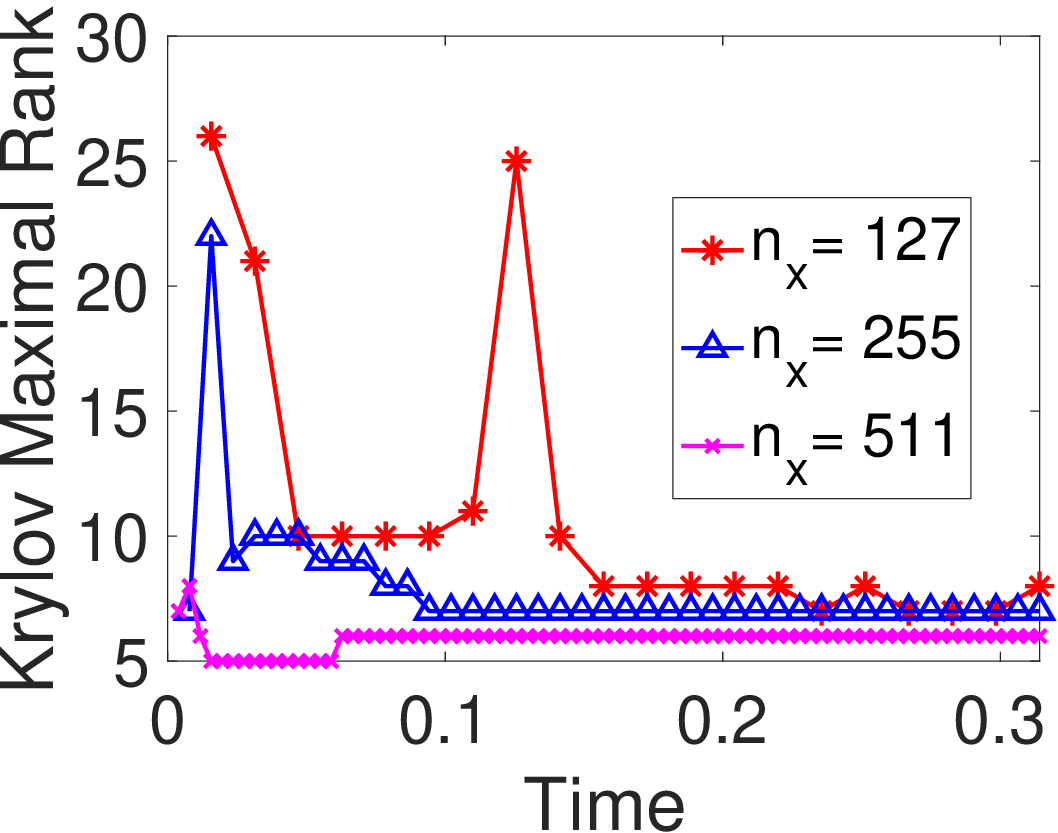} 
\caption{Example \ref{Example restart}. Solving diffusion equation with variable coefficients \eqref{coefficient ex__parameter_restart}  and manufactured solution \eqref{mms ex__parameter_restart}. Restart every $25$ iterations. Stopping criteria $\eta_{\mathcal{A},b}(x_k) \le \delta.$ $\delta=\epsilon=h^3.$ For $h=h_x = h_y  \in \{ 1.56(-2), 7.81(-3),  3.90(-3)\}$, this figure displays the history of solution error, iteration number, $\eta_{\mathcal{A},b}$, solution rank, and maximal Krylov rank. 
} 
\label{figure ex__parameter_restart25}
\end{figure}

\end{example}

\begin{example}[Varying the stopping criteria]\label{exa:stop}
\textnormal{
  In this example, we  switch the stopping criteria to $\eta_b$ and keep all other parameters the same as in Example \ref{Optimal}. The results for \eqref{eq:mastereq} with variable coefficients \eqref{coefficient ex__parameter_restart}  and manufactured solution \eqref{mms ex__parameter_restart} are reported in Figure \ref{figure ex__parameter_residual} when the mesh size $h=7.81(-3)$. In this case, the iteration numbers are drastically larger, i.e., the stopping criteria $\eta_b\le\delta$ does not guarantee that $\eta_b$ stagnates below the stopping parameter before the maximal number of iteration is reached. 
  Similar behavior has been reported in    \cite{coulaud2022robust}.}

\begin{figure}[htbp] 
\centering
\includegraphics[width=0.32\linewidth]{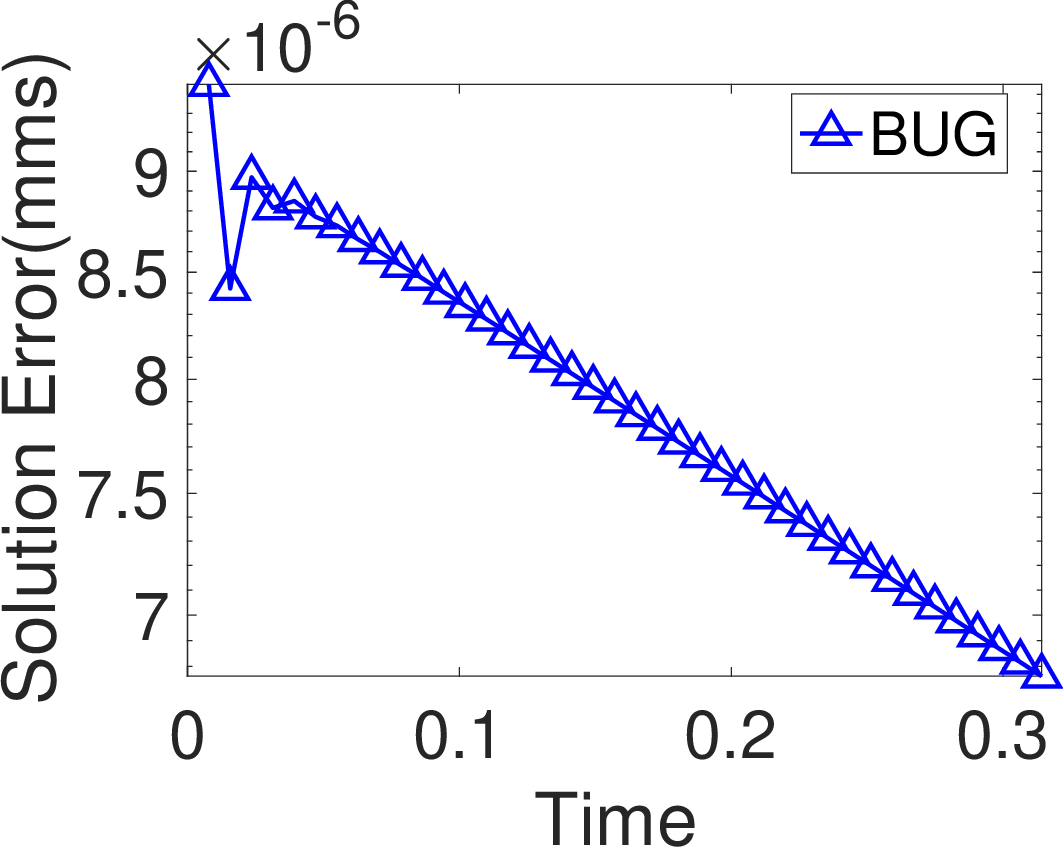} \includegraphics[width=0.32\linewidth]{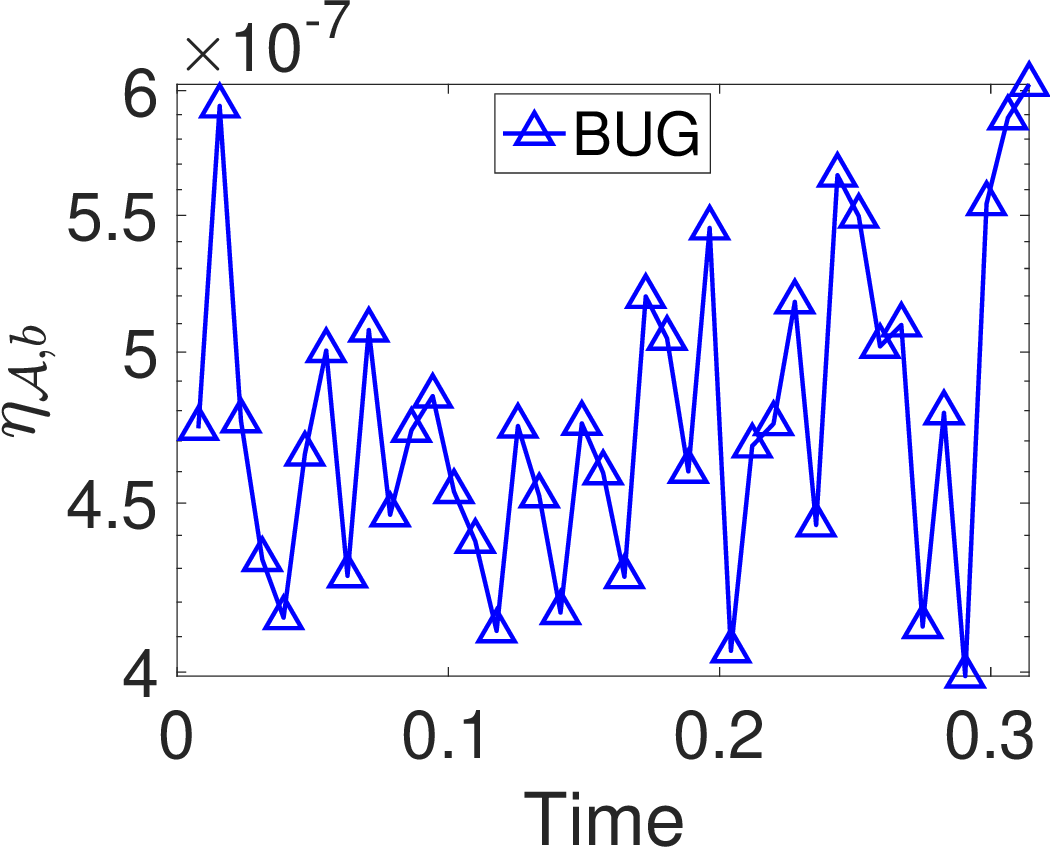} \includegraphics[width=0.32\linewidth]{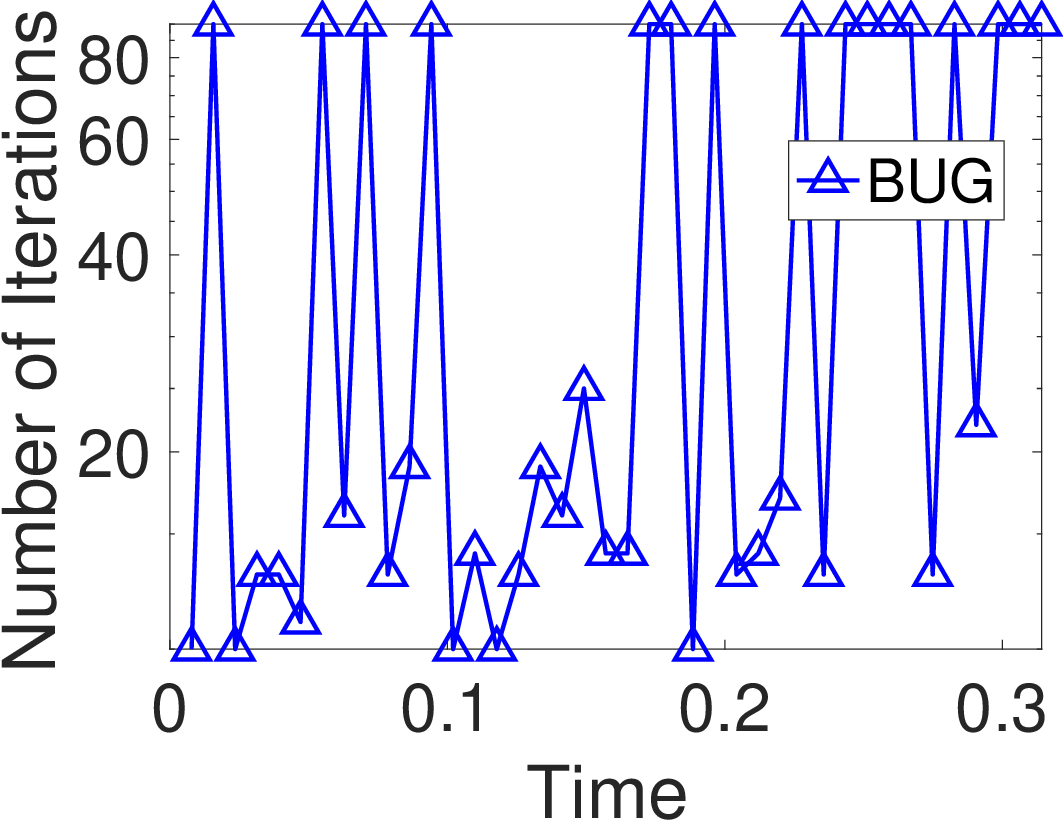} \\
\caption{Example \ref{exa:stop}. Solving diffusion equation with variable coefficients \eqref{coefficient ex__parameter_restart} and manufactured solution \eqref{mms ex__parameter_restart}. Restart every $3$ iterations using  BUG preconditioner. Stopping criteria $\eta_b \le \delta$.  $\delta=\epsilon=h^3.$
For $h= h_x = h_y  =7.81(-3)$, we report the history of solution error, $\eta_{\mathcal{A},b}$, and  iteration number. 
} 
\label{figure ex__parameter_residual}
\end{figure}

\end{example}

\begin{example}[Varying the rounding tolerance] \label{exa:round} 
\textnormal{
 In this example, we solve  \eqref{eq:mastereq} with variable coefficients \eqref{coefficient ex__parameter_restart}  and   manufactured solutions given by \eqref{mms ex__parameter_restart}.
To test whether second-order convergence may be observed by a larger rounding tolerance, we chose  a larger rounding tolerance $\epsilon = h^2$, while keeping all other parameters same as Example \ref{Optimal}. The   convergence rate is displayed in Table \ref{table hx2<hx3} for both ES and BUG preconditioner. We observe clearly deteriorated convergence rate (of first order) for ES preconditioners. However, the BUG preconditioner  retains its second order convergence rate. The BUG preconditioner has more forgiveness for larger tolerance and this phenomenon is reported in Tables \ref{table highcontrast_variable} and \ref{table ex_BDF_const convergence} in later parts of the paper.}
\begin{table}[htbp]  
 \begin{center} 
\begin{tabular}{| c | c | c |c|c | c|c|c|c| c | c | } 
 \hline
 $h$   & error (ES) & order (ES) &   error (BUG) & order (BUG)   \\
 \hline
 \hline
 3.12(-2) & 1.18(-4) &  --  &  1.11(-4) &  --  \\
 1.56(-2)  & 5.16(-5) &  1.19   &  2.84(-5) &  1.96   \\
 7.81(-3) & 3.09(-5) &  0.74  &  7.11(-6) &  2.00  \\
 3.90(-3) & 1.65(-5) &  0.91  &  1.77(-6) &  2.00  \\
 \hline
 \end{tabular}
  \caption{Example \ref{exa:round}. Solving diffusion equation with variable coefficients \eqref{coefficient ex__parameter_restart} with  manufactured solution \eqref{mms ex__parameter_restart}.  BUG and ES preconditioner. $\delta=\epsilon=h^2$. For $h=h_x = h_y  \in \{3.12(-2), 1.56(-2), 7.81(-3),  3.90(-3)\}$, this table displays the error at the final time and order of convergence. 
 \label{table hx2<hx3}}  
\end{center} 
 \end{table} 
\end{example}

In summary, from examples tested in this subsection, we confirm that the optimal parameter shall be taken as the following: stopping criteria $\eta_{\mathcal{A},b}(x_k) \le \delta.$ $\delta=\epsilon$ with  $\epsilon$ chosen according to local truncation error analysis and with frequent restarting (restart=3). In the remaining part of the paper, we will use this choice for the BUG preconditioner.

\subsection{Comparison of BUG, ES, and hybrid preconditioners}
In this subsection, we conduct detailed comparison of the performance of the ES, BUG and the hybrid preconditioner. Like the previous subsection, we consider second order in space and time schemes with mesh $\Delta t=O(h).$  The BUG preconditioner use the parameter choice indicated in previous subsection. The ES preconditioner use parameters as indicated in the Appendix with the same rounding tolerance as the BUG preconditioner. 

The advantage of the ES preconditioner is its computational efficiency based on a simple sum of Kronecker products. However, the accuracy of ES is based on a particular form of operator structure which may not hold in many cases. We note that the BUG preconditioner does have cost associated with solving K-, L- and the Galerkin steps. Ideally, for matrix equations of different size and structures \cite{simoncini2016computational}, there are various techniques to be used to optimize the computation. In this paper, we did not optimize those solvers, and from our numerical experiments, the result is still satisfactory.

\begin{example} \label{Example compare_es_le_}
\textnormal{
In this example we solve equation \eqref{2d variable} with the following  coefficients 
\begin{eqnarray} \label{coefficient ex_comparision_es<}
a_1(x)& =& 1 = b_1(y), \quad a_2(x) = 0.8 = b_3(y),\nonumber \\
b_2(y) &=& 1 = a_3(x), \quad a_4(x) = 1 = b_4(y),
\end{eqnarray}
 and manufactured solution
\begin{equation}
\label{mms ex_comparision_es<}
X(x,y,t) = \exp(-(x-0.1\sin(t))^2/0.12^2)\exp(-(y+0.1\cos(t))^2/0.12^2)\exp(-t).
\end{equation}
}
 
\textnormal{
 Figure \ref{figure:ex_comparision_es<} displays the simulation results. The three preconditioners perform similarly with regard to the solution error and rank. The  BUG and hybrid preconditioner give smaller values for $\eta_{\mathcal{A},b}$. The iteration numbers for the ES preconditioner are constant (=5) independent of the time step. For the BUG  preconditioner, the iteration numbers are 1 except the first time step. We notice that this observation is consistent with results reported in the previous subsection. The  solution has a rapid transition during the initial layer, and this causes a larger iteration number (={10}) initially. 
}

\textnormal{  
We further test the time stepping with  an initial condition where we set $t=0$ in \eqref{mms ex_comparision_es<} to get
\begin{equation}\label{ic compare}
   X(x,y,0) = \exp(-x^2/0.12^2)\exp(-(y+0.1)^2/0.12^2)
\end{equation}
and no forcing term, i.e. $G(x,y,t)=0$. 
We display in Figure \ref{figure:ex_ic_comparision} the history of $\eta_{\mathcal{A},b}$, the solution rank, the maximal Krylov rank, and  the iteration number. In the figure, we also display the solution rank when rounded by the same rounding constant and a full rank standard finite difference scheme in comparison. We can see that the exact solution has a rank that grows at the initial stage and the rank starts to decay. All three preconditioners  accurately track this trend in rank. The   hybrid preconditioner  yields a lower number of iteration and has the smallest maximal Krylov rank.
}
\begin{figure}[h] 
\centering
\includegraphics[width=0.32\linewidth]{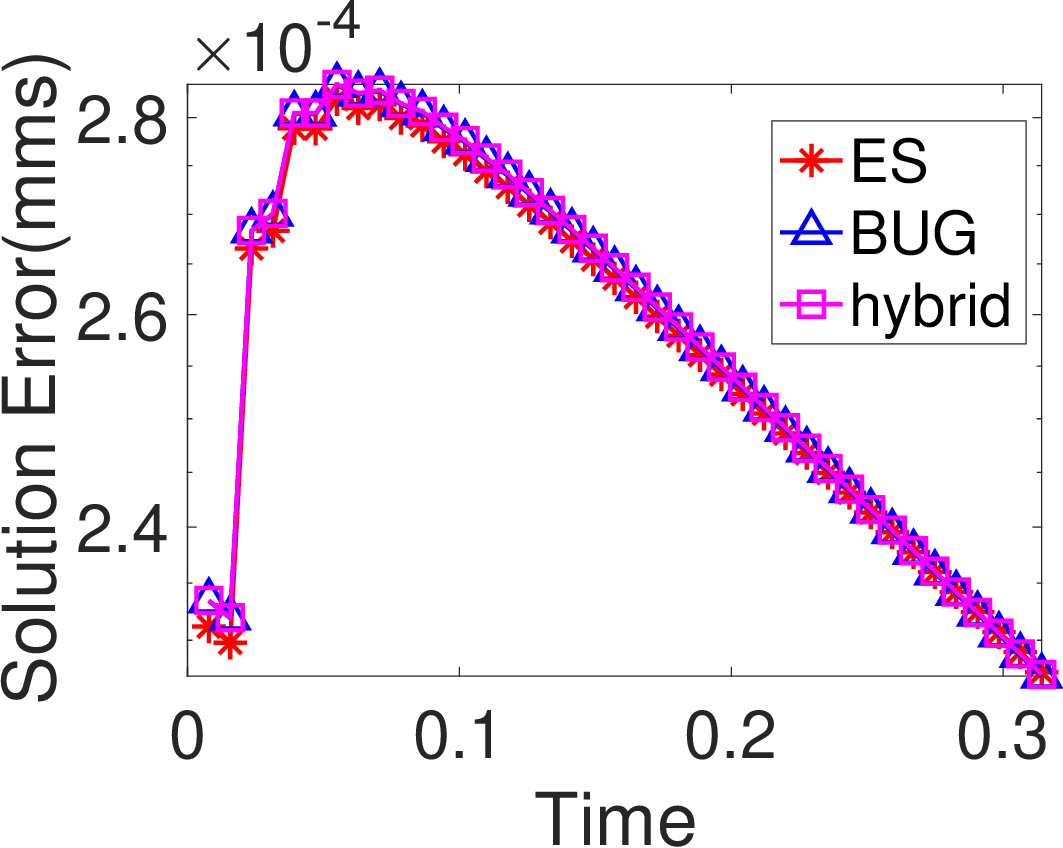} 
\hspace{0.1\linewidth}
\includegraphics[width=0.32\linewidth]{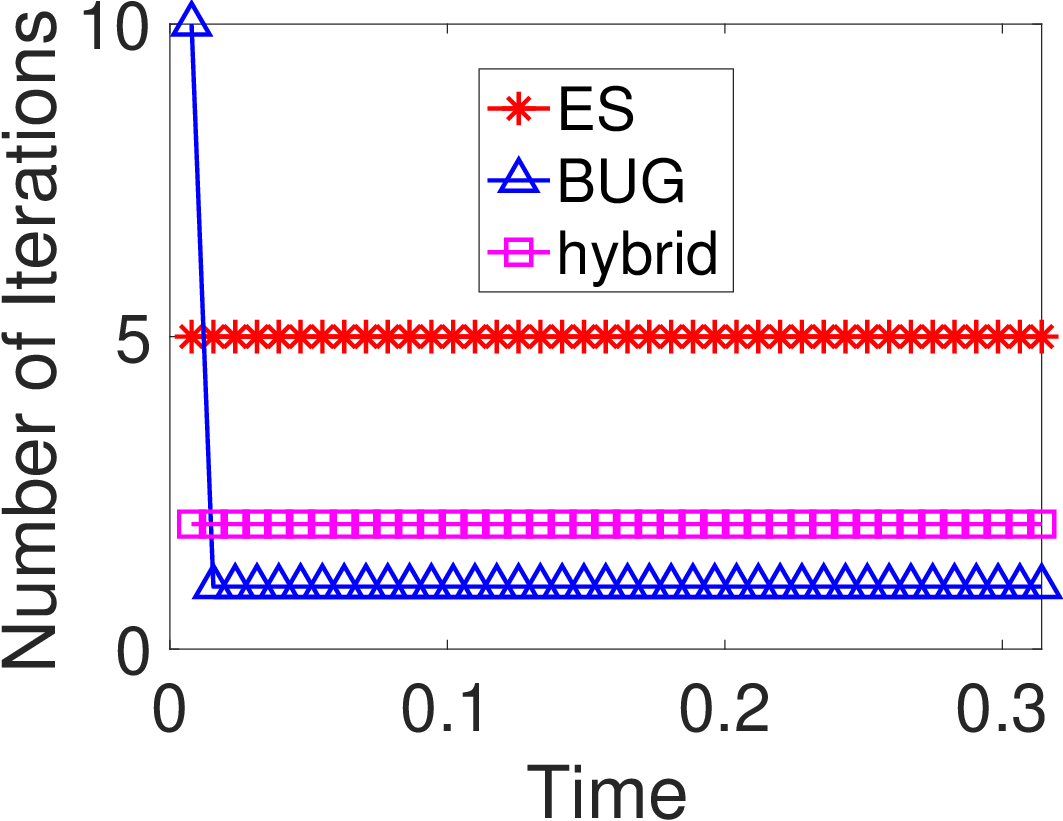} 
\\
\includegraphics[width=0.32\linewidth]{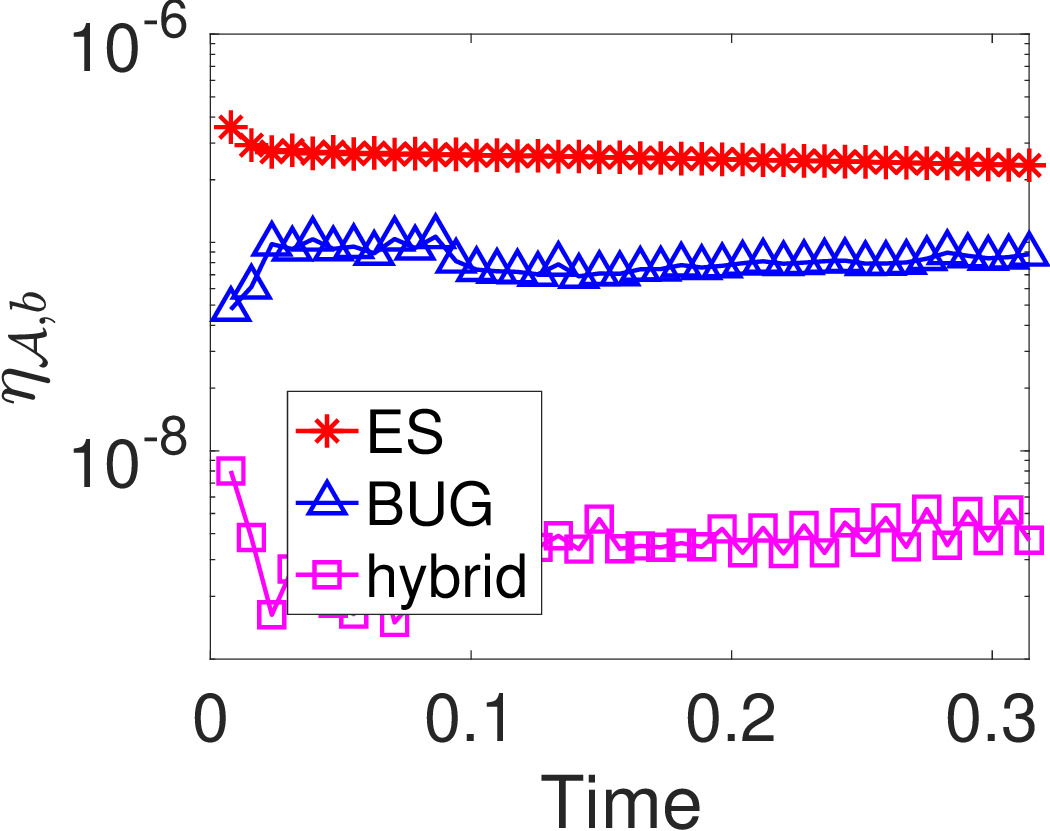} 
\includegraphics[width=0.32\linewidth]{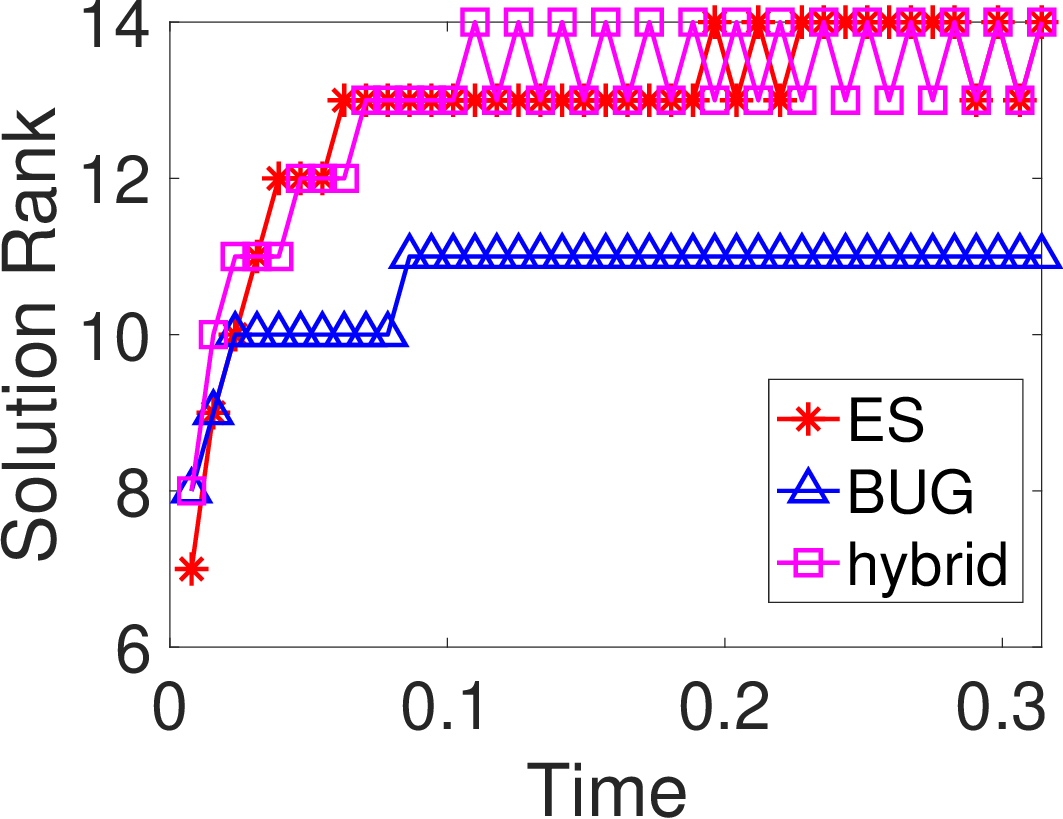} 
\includegraphics[width=0.32\linewidth]{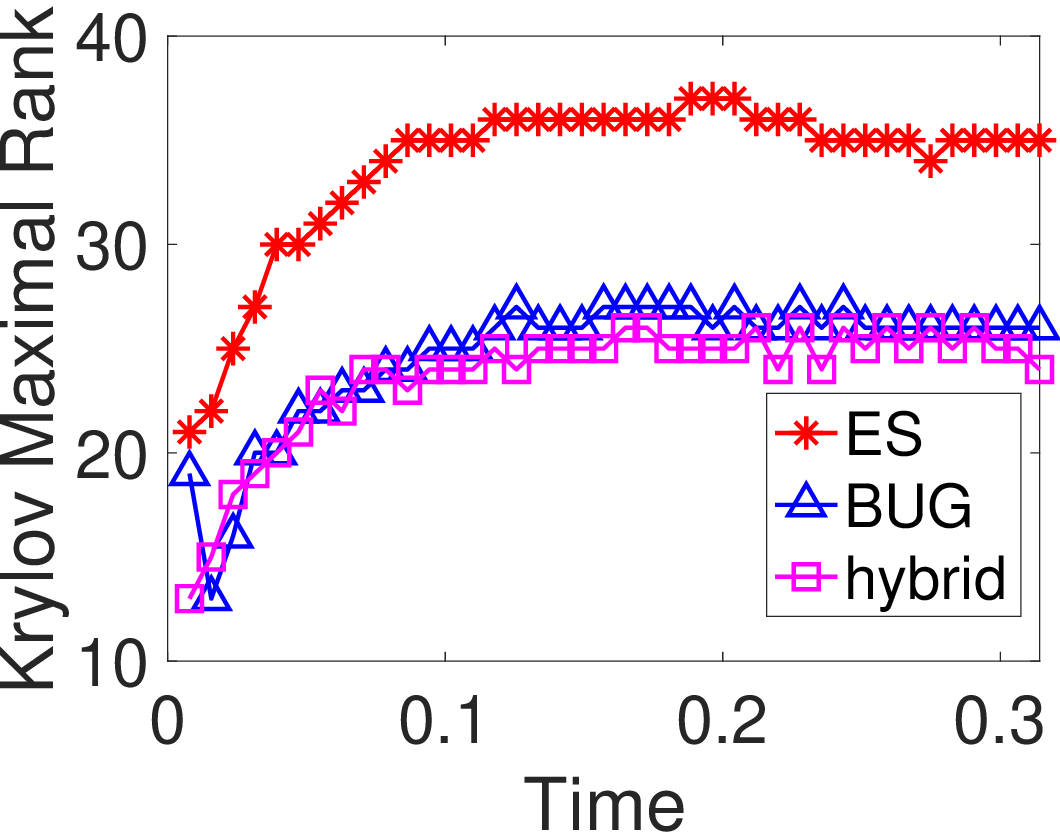} 
\caption{Example \ref{Example compare_es_le_}. Solving diffusion equation with constant coefficients \eqref{coefficient ex_comparision_es<} and manufactured solution \eqref{mms ex_comparision_es<}. Preconditioner: ES, BUG, and hybrid. Rounding tolerance $\epsilon = h^3$. For $h=h_x = h_y  =7.81(-3)$, this figure displays the history of solution error, iteration number, $\eta_{\mathcal{A},b}$, solution rank, and maximal Krylov rank. 
\label{figure:ex_comparision_es<}} 
\end{figure}

\begin{figure}[h] 
\centering
\includegraphics[width=0.32\linewidth]{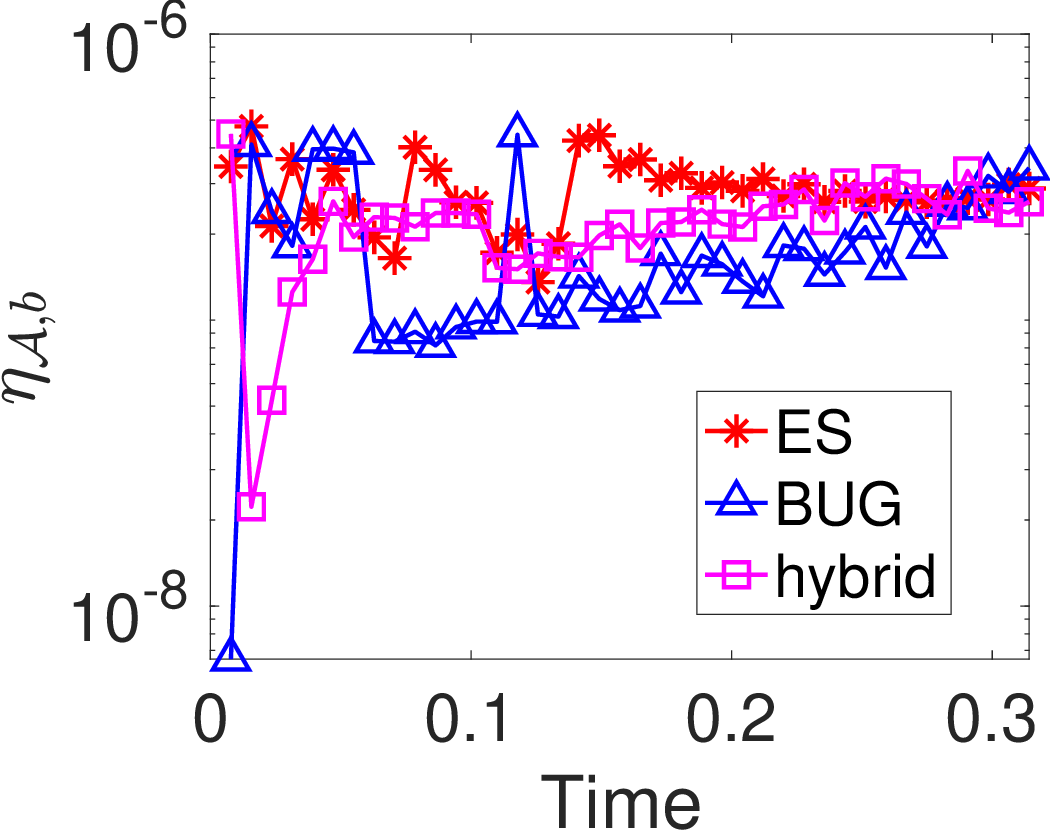} 
\hspace{0.1\linewidth}
\includegraphics[width=0.32\linewidth]{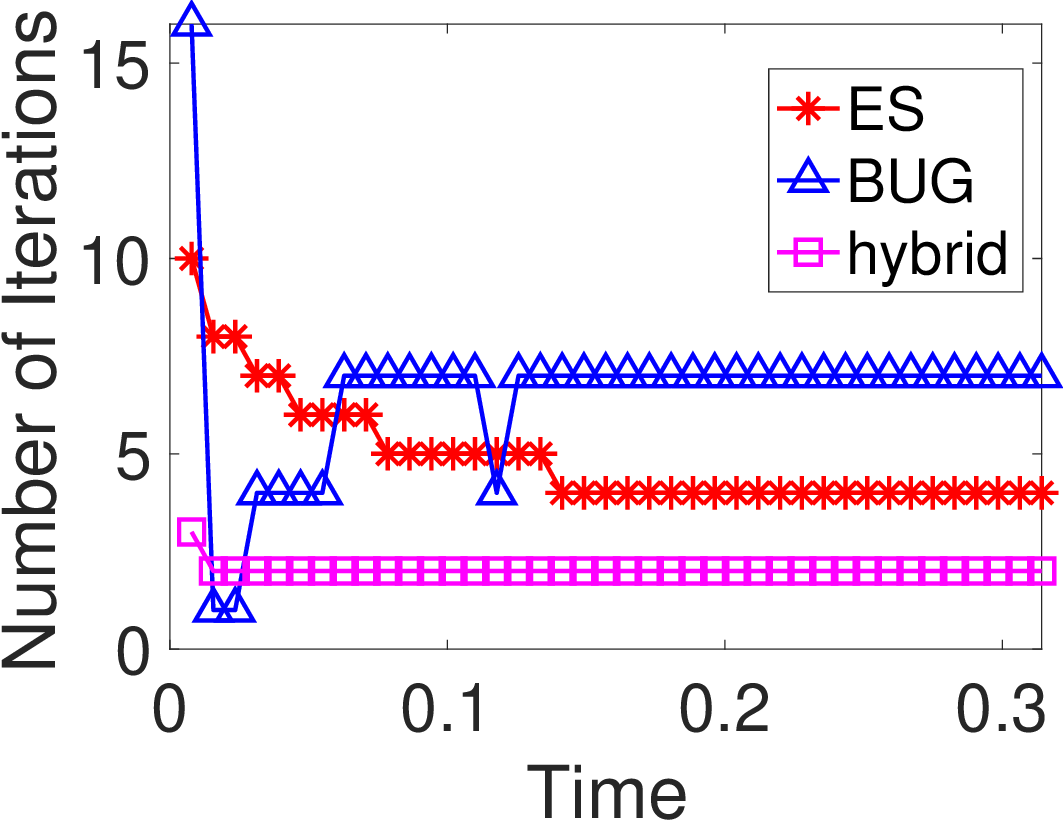} 
\\
\includegraphics[width=0.32\linewidth]{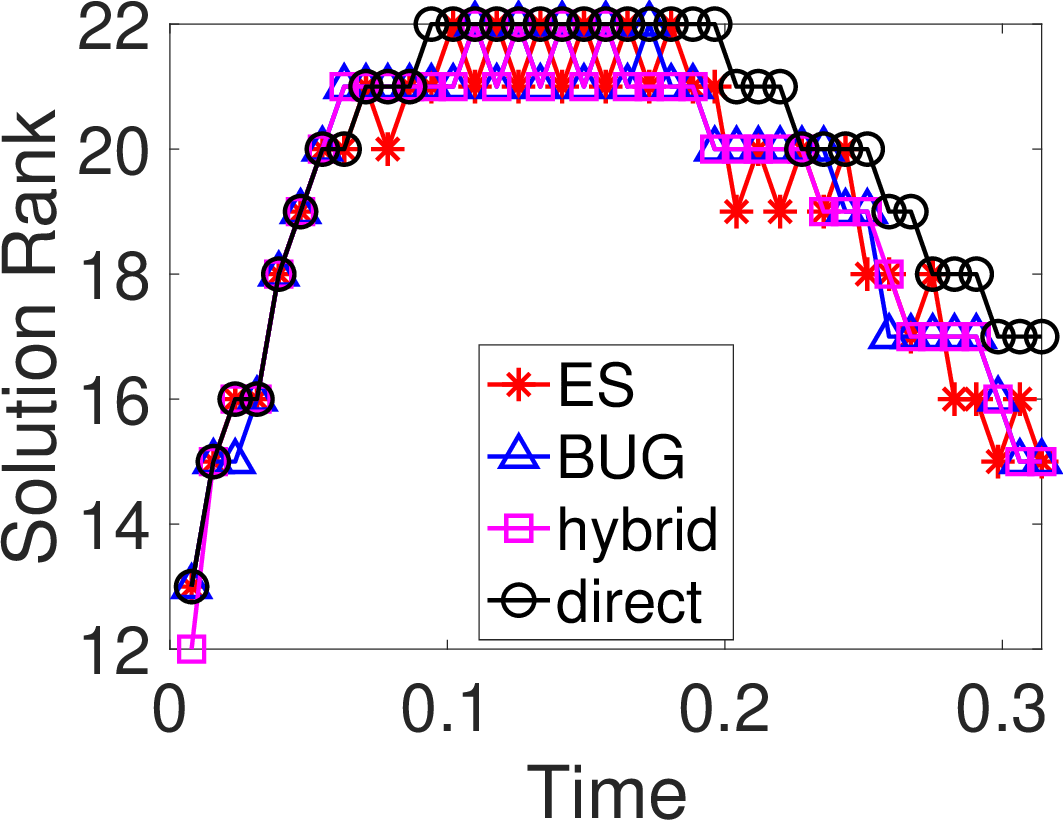} 
\hspace{0.1\linewidth}
\includegraphics[width=0.32\linewidth]{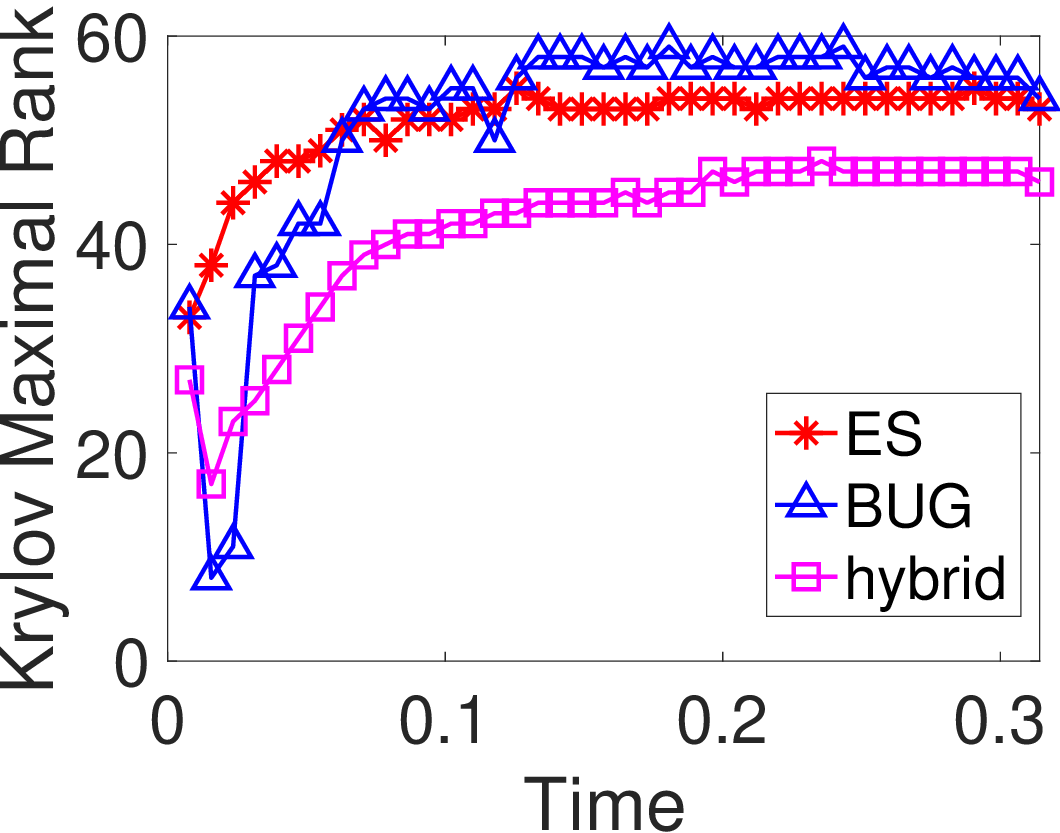} 
\caption{Example \ref{Example compare_es_le_}. Solving diffusion equation with  coefficients \eqref{coefficient ex_comparision_es<} and initial condition \eqref{ic compare}. Preconditioner: ES, mBUG, and hybrid. Rounding tolerance $\epsilon = h^3$. For $h=h_x = h_y  =7.81(-3)$, this figure displays the history of iteration number, $\eta_{\mathcal{A},b}$, solution rank, and maximal Krylov rank.}
\label{figure:ex_ic_comparision}
\end{figure}
\end{example}

\begin{example}[High contrast variable coefficient example]\label{ex:highc}
\textnormal{
We proceed with a challenging example, where we have high contrast variable coefficients give by
\begin{eqnarray} \label{coefficient highcontrast_variable}
a_1(x)& = 1, \quad &b_1(y) = 1+0.1\sin(\pi y), \nonumber\\
a_2(x)&  = 1, \quad & b_2(y) = 1/\eta(1 + 0.1 \sin(\pi y)), \nonumber \\
a_3(x)&  = 1, \quad & b_3(y) = 1/\eta(1 + 0.1 \sin(\pi y)), \nonumber \\
a_4(x)& = 1, \quad &b_4(y) = 1/\eta^2(1 + 0.1 \sin(\pi y)),
\end{eqnarray}
with $\eta =1/10$. We  test with the manufactured solution given by 
\begin{eqnarray}  
\label{coefficient high contrast tobler}
X(x,y,t) =  (1+\sin(\pi t/2))(1-x^2)(1-y^2)\exp(x)\exp(y).
\end{eqnarray}
}

 \begin{table}[]  
 \begin{center} 
\begin{tabular}{| c | c | c| c| c | c | c | c | c  |} 
 \hline
 & error  & order  & error  & order  & error  & order  & error  & order  
\\
 \hline
 $h$ & $\epsilon=h^3$(ES)  & --  &  $\epsilon=\eta h^3$(ES)  &  --  & $\epsilon=\eta^2 h^3$(ES) &  -- &  $ \epsilon=h^3$(BUG)  &  --\\
 \hline
 \hline
   3.12(-2)  &  1.42(-2) & -- &  1.17(-3) & -- & 9.67(-4) & -- &  9.13(-4) & -- \\
 1.56(-2)  &   4.77(-3) &  1.57 &  3.10(-4) &  1.92 &  2.35(-4) &  2.03 &  2.40(-4) &  1.92 \\
  7.81(-3) &  4.71(-3) &  0.017 &  9.71(-5) &  1.67 &  6.06(-5) &  1.95 &  6.03(-5) &  1.99 \\
 \hline
 \end{tabular}
 \caption{Example \ref{ex:highc}. Solving diffusion equation with high contrast variable coefficients \eqref{coefficient highcontrast_variable} and  manufactured solution \eqref{coefficient high contrast tobler}. Preconditioner: ES and BUG. For $h = h_x = h_y  \in \{3.12(-2), 1.56(-2), 7.81(-3)\}$, this table displays the solution error at the final time for different rounding tolerances  and order of convergence. 
 }  
 \label{table highcontrast_variable}
 \end{center} 
 \end{table} 
\textnormal{
In Table \ref{table highcontrast_variable}, we test the convergence using rounding tolerance $\epsilon \in\{ h^3, \eta h^3, \eta^2 h^3\}$ for the ES preconditioner and $\epsilon = h^3$ for BUG preconditioner. For the ES preconditioner, the second order convergence is achieved when $\epsilon = \eta^2 h^3$ according to the tolerance number chosen according to problem scaling. However, the ES preconditioner fails to converge   when $\epsilon = h^3.$  The BUG preconditioner, on the other hand,  gives the second order convergence rate with $\epsilon=h^3.$
This table is similar in spirit to Table \ref{table hx2<hx3}, which shows BUG preconditioner is more forgiving, and allows a larger   choice of $\epsilon.$ %
In Figure \ref{figure highcontrast_variable h3}, we compare ES and BUG  preconditioners with $\epsilon=\eta^2 h^3$ and $\epsilon=h^3$ respectively. In those cases, the two preconditioners give similar errors as indicated by Table \ref{table highcontrast_variable}. We can see the  BUG preconditioner outperfrom ES preconditioner in terms of number of iterations and maximal Krylov rank, and solution rank in this case.
}

\begin{figure}[h] 
\centering
\includegraphics[width=0.32\linewidth]{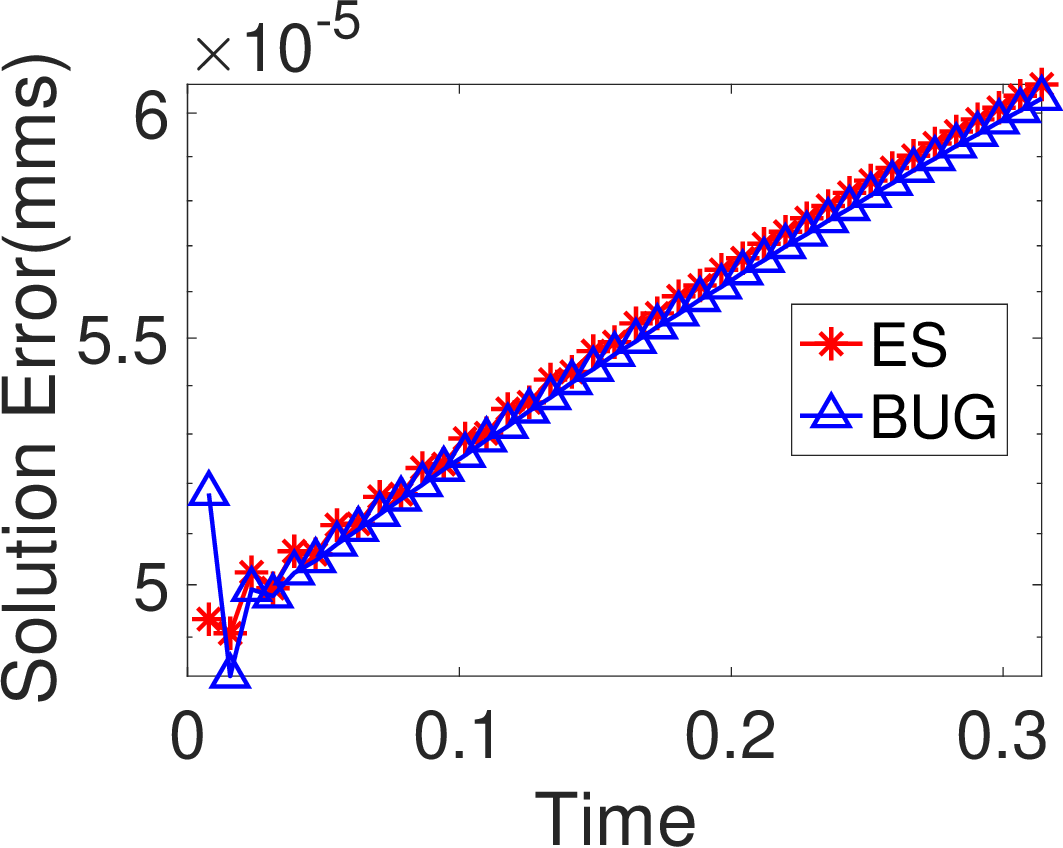} 
\hspace{0.1\linewidth}
\includegraphics[width=0.32\linewidth]{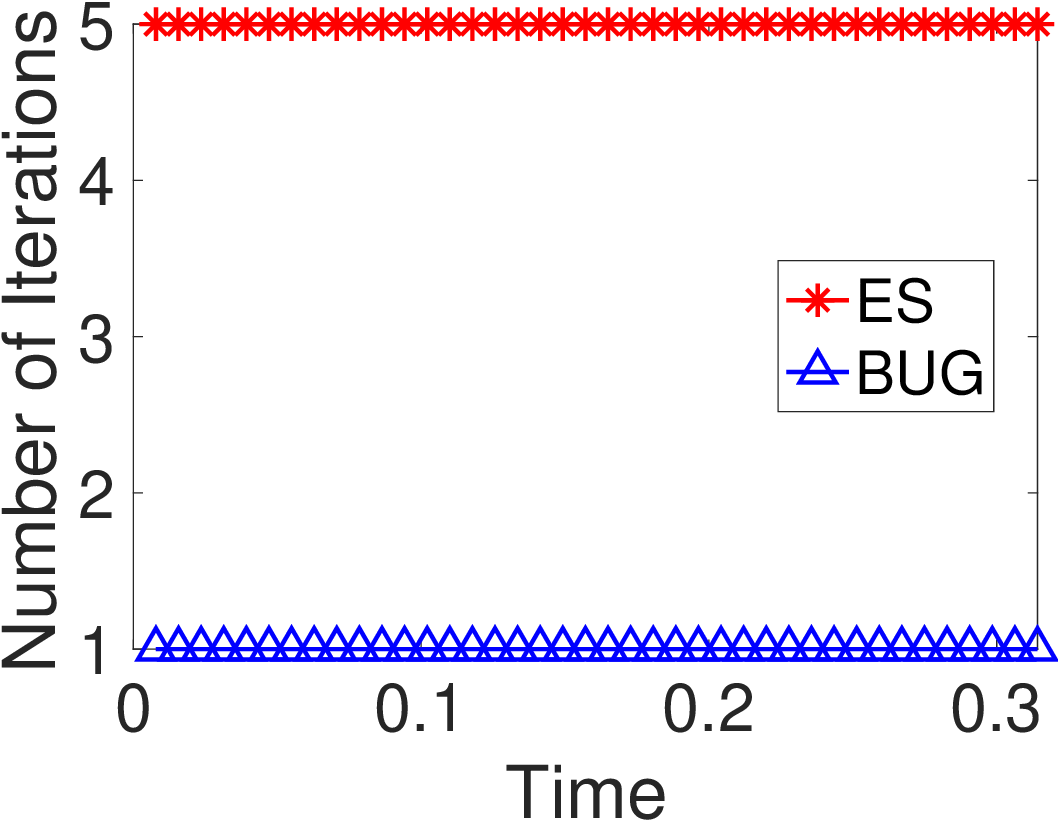} 
\\
\includegraphics[width=0.32\linewidth]{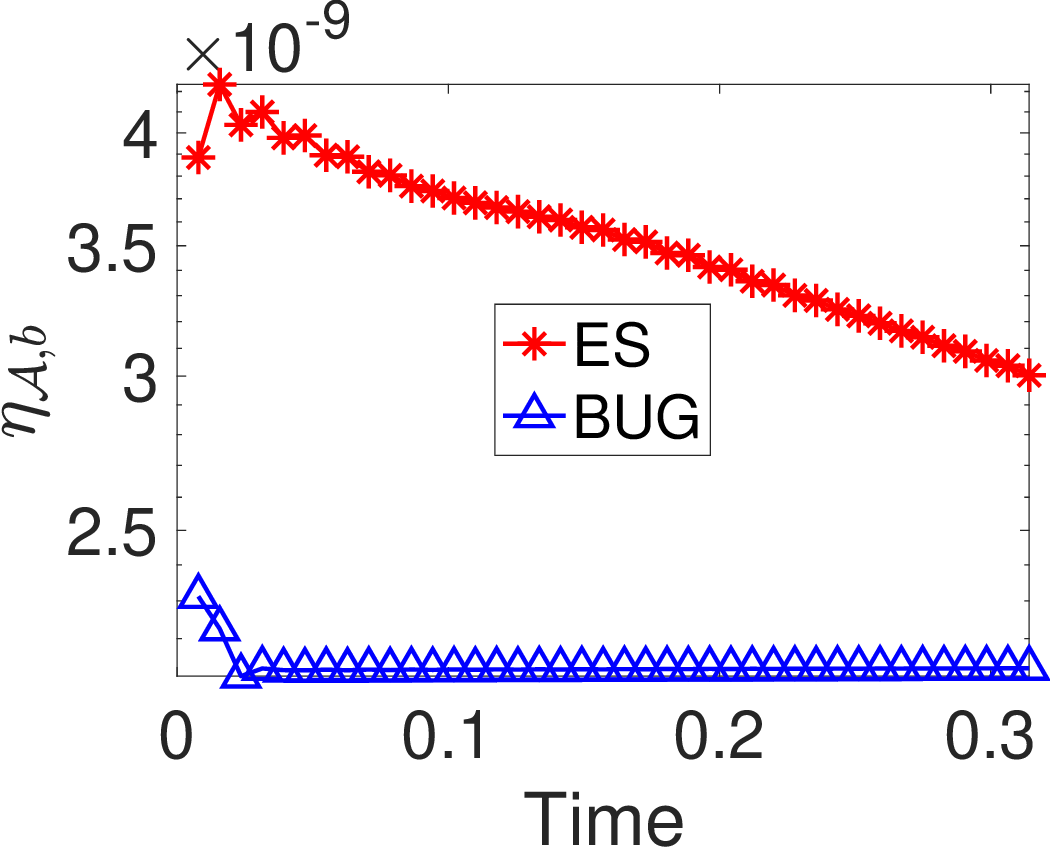} 
\includegraphics[width=0.32\linewidth]{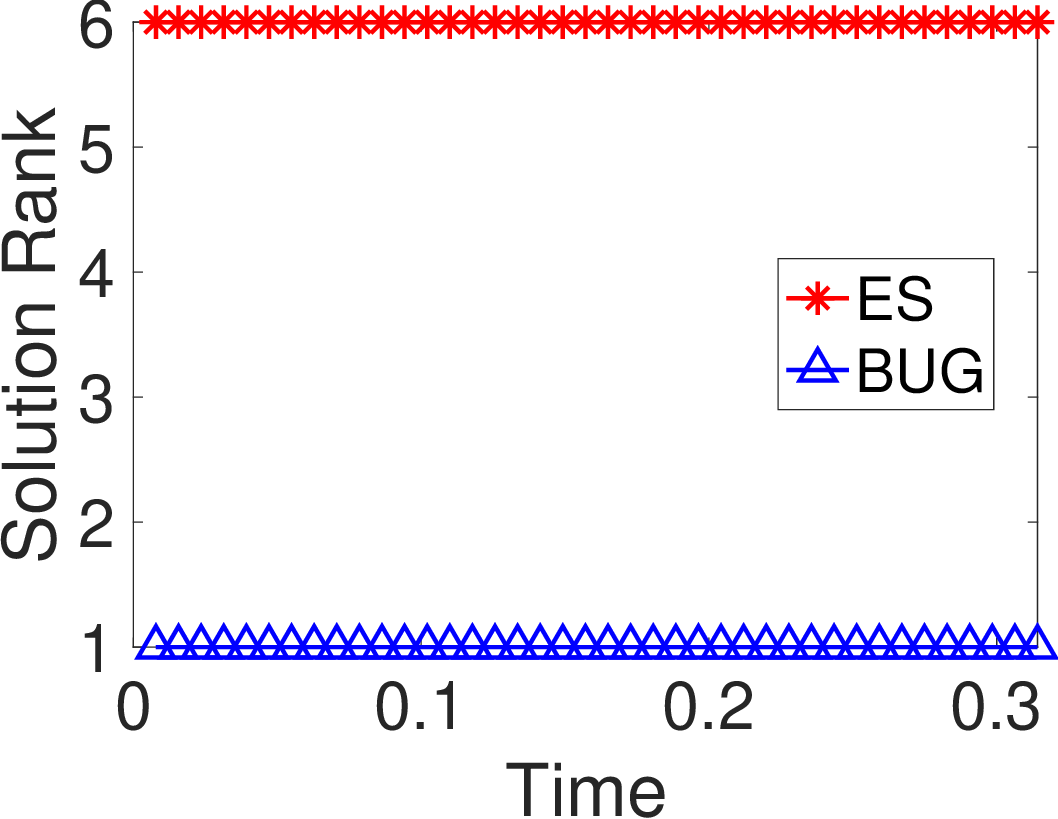} 
\includegraphics[width=0.32\linewidth]{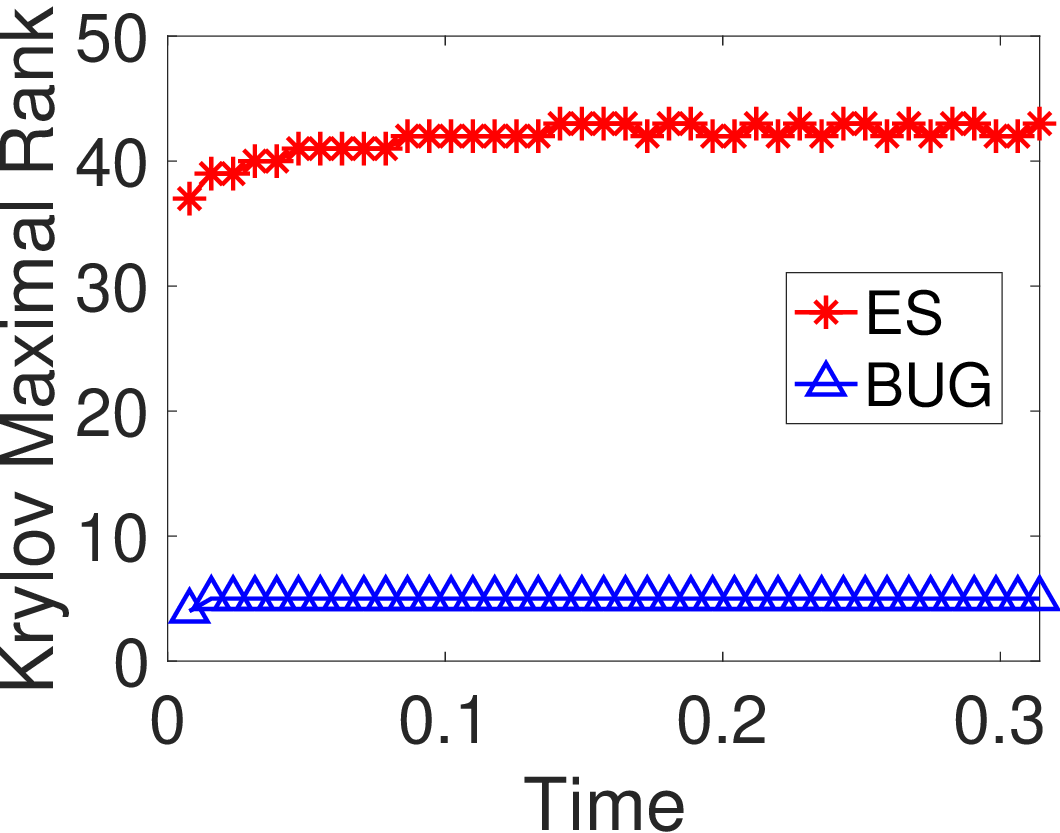} 
\caption{Example \ref{ex:highc}. Solving diffusion equation with high contrast variable coefficients \eqref{coefficient highcontrast_variable} and  manufactured solution \eqref{coefficient high contrast tobler}. Preconditioner: ES and BUG.  Rounding tolerance $\epsilon = \eta^2h^3$ for ES and $\epsilon = h^3$ for BUG. For $h = h_x = h_y  =7.81(-3)$, this figure displays the history of solution error, iteration number, $\eta_{\mathcal{A},b}$, solution rank, and maximal Krylov rank. 
} 
\label{figure highcontrast_variable h3}
\end{figure}

\end{example}

\subsection{Higher order schemes}
In this subsection, we consider higher order schemes. We use a fourth order finite difference spatial discretization with high order single step or multistep method:  Crouzeix’s three stage, fourth-order DIRK (diagonally implicit Runge-Kutta) method and fourth order backward differentiation formula (BDF) with mesh $\Delta t=O(h)$. In this case, the local truncation error is on the order of $\Delta t (\Delta t^4 + h^4) = O(h^5)$. For the BDF4 scheme and the inner stage solver for the DIRK4 method, the rounding tolerance $\epsilon$ is to make sure that its induced error is on the same order of the local truncation error, we need
$
h \epsilon (1+ \Delta t/h^2) = O(h^5),
$
which leads to $\epsilon = O(h^5)$. For the truncation tolerance, we take $\epsilon_2=O(h^4)$ to match the accuracy. To avoid very small rounding tolerance $\epsilon$, we test the algorithms at most to $h=h_x=h_y = 2/2^7$ which already leads to $h^5 =O(10^{-10})$. All other parameters are chosen as recommended in Section \ref{sec:parameterstudy}. The final time is $t_{\rm end} = 0.4\pi$ in this subsection. 

We note that to design high order BUG integrators,  is highly challenging \cite{Nakao:2023aa,Ceruti2024}. Typically one need to add many bases to the K- or L-step to ensure high order accuracy, which will increase the rank and slow down the computation. However, in this framework, this issue is not present any more, which is a major advantage of using the BUG  as a preconditioner rather than a direct solver.

\begin{example}[BDF4]\label{ex:bdf}
\textnormal{
    We  compute with the fourth order BDF scheme and and fourth order spatial discretization for \eqref{eq:mastereq} with the following variable coefficients
    \begin{align}\label{coefficient BDF_1}
a_1(x)& = 1+0.15\sin(\pi x),  & b_1(y) &= 1+0.1\cos(\pi y), \nonumber\\
a_2(x)&  = 0.15 = b_3(y),  & b_2(y) &= 1 = a_3(x), \nonumber\\
a_4(x)& = 1,  & b_4(y) &= 1 + 0.1 \cos(\pi y),
\end{align}
and manufactured solution
\begin{eqnarray}\label{mms BDF}
X(x,y,t) = \exp(-(x-0.1\sin(t))^2/0.12^2)\exp(-(y+0.1\cos(t))^2/0.12^2)\exp(-t).
\end{eqnarray} 
}

 \begin{table}[]  
 \begin{center} 
\begin{tabular}{| c | c | c | c|  c | c | c | c  |} 
\hline
{($\epsilon = h^5$)} & ES & ES & BUG  & BUG & hybrid & hybrid \\
  $h$ & error  & order  & error   & order & error & order \\
 \hline
 \hline
   1.25(-1) &   9.49(-3) & -- &   9.49(-3) & -- &  9.49(-3) & --  \\
 6.25(-2) &    4.03(-4) &  4.55 &  4.03(-4) &  4.55 &  4.03(-4) &  4.55 \\
   3.12(-2) &   2.80(-5) &  3.84 &  2.80(-5) &  3.84 &  2.80(-5) &  3.84 \\
 1.56(-2)  &   1.81(-6) &  3.94 &  1.81(-6) &  3.94 &  1.81(-6) &  3.94 \\
 \hline
\hline
{
 ($\epsilon = h^3$)} & ES & ES & BUG  & BUG & hybrid & hybrid \\
  $h$ & error  & order  & error   & order & error & order \\
 \hline
 \hline
   1.25(-1) &   9.33(-3) & -- &   9.31(-3) & -- &  9.07(-3) & --  \\
 6.25(-2) &    4.03(-4) &  4.53 &  4.03(-4) &  4.52 &  4.07(-4) &  4.47 \\
   3.12(-2) &   4.14(-5) &  3.28 &  2.81(-5) &  3.84 &  2.82(-5) &  3.84 \\
 1.56(-2)  &   4.97(-5) &  -0.26 &  1.81(-6) &  3.95 &  1.81(-6) &  3.96 \\
 \hline
 \end{tabular}
 \caption{Example \ref{ex:bdf}. Solving diffusion equation with variable coefficients \eqref{coefficient BDF_1} and manufactured solution \eqref{mms BDF}. BDF4    and forth order finite difference in space. Preconditioner: ES, BUG, and hybrid. For $h = h_x = h_y  \in \{1.25(-1), 6.25(-2), 3.12(-2), 1.56(-2)\}$, this table displays the solution error at the final time.   Rounding tolerance $\epsilon = h^5$ (top table) and $\epsilon = h^3$ (bottom table).}
 \label{table ex_BDF_const convergence}
 \end{center} 
 \end{table} 

\textnormal{
Table \ref{table ex_BDF_const convergence} displays the solution error at the final time for the ES, BUG, and hybrid preconditioners. When   rounding tolerance is $\epsilon = h^5$, the fourth-order convergence can be observed for all three preconditioners. However when $\epsilon = h^3,$ The ES preconditioner does not converge, while the BUG and hybrid preconditioner still converge with fourth order accuracy.
Figure \ref{figure ex_BDF_const} displays the history of solution error, $\eta_{\mathcal{A},b}$, solution rank, maximal Krylov rank, and  iteration number for three preconditioners. The Krylov ranks are not small for the ES preconditioner, while the  Krylov ranks are smaller for the BUG and hybrid preconditioners.}
\begin{figure}[h] 
\centering
\includegraphics[width=0.32\linewidth]{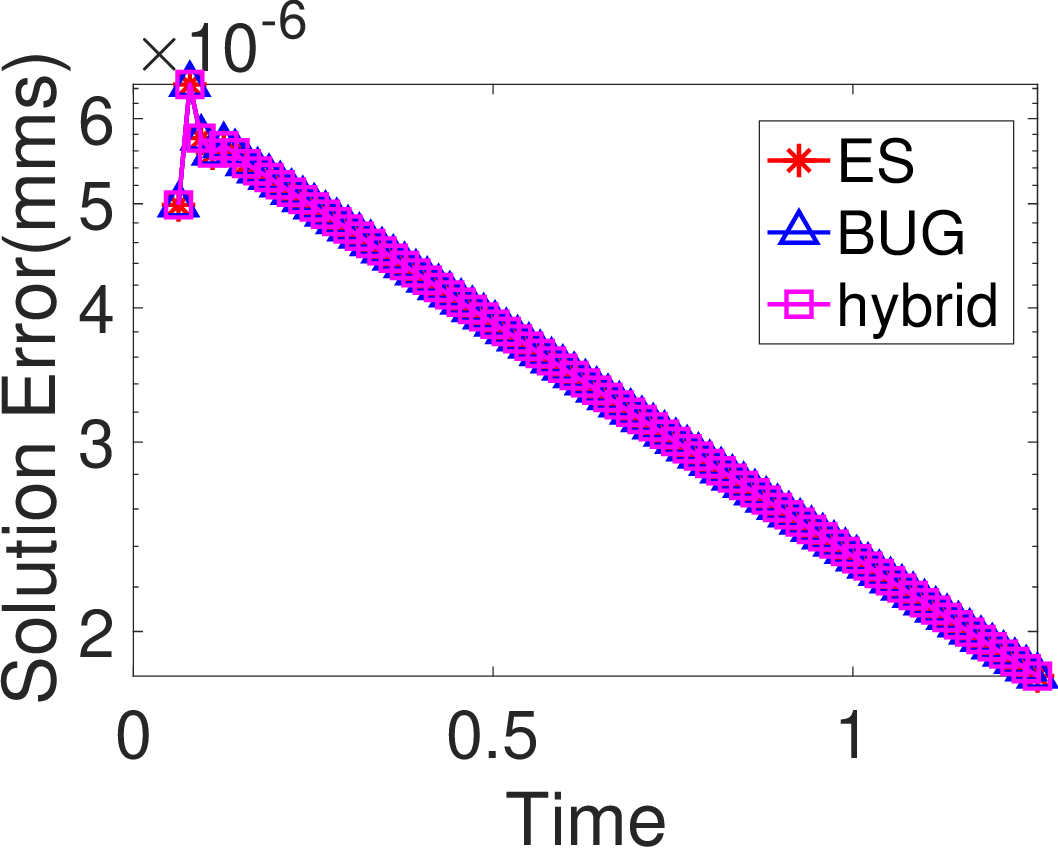} 
\hspace{0.1\linewidth}
\includegraphics[width=0.32\linewidth]{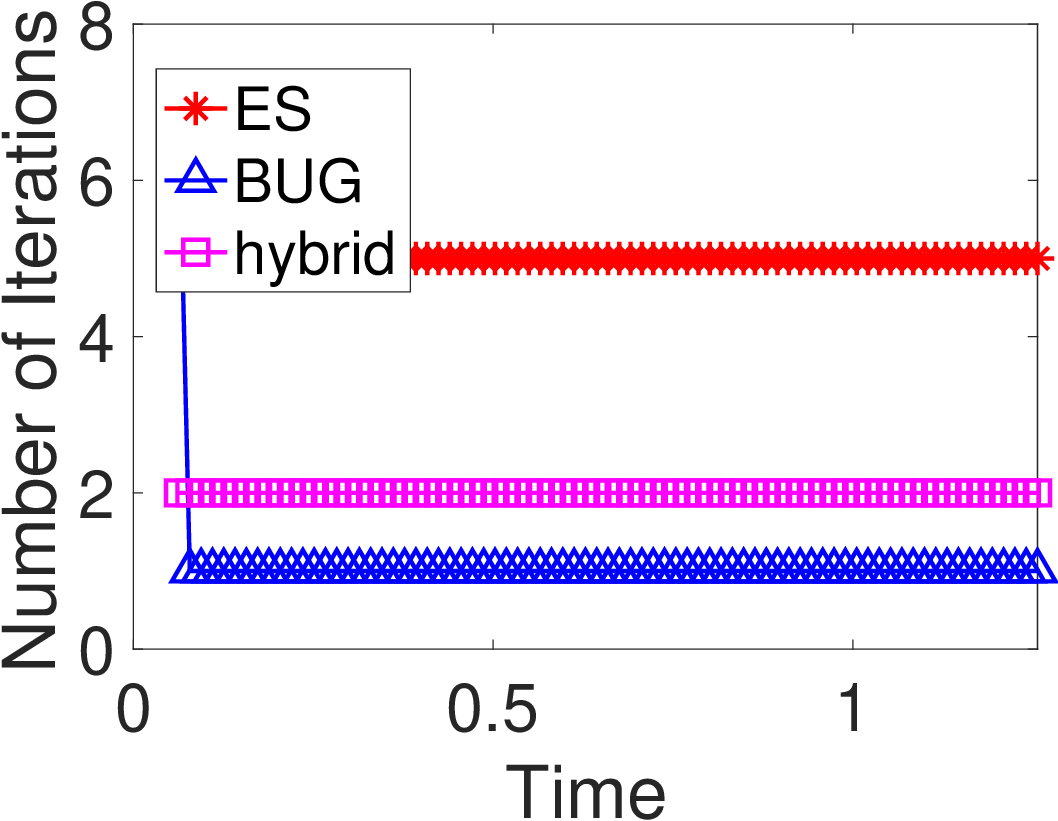} 
\\
\includegraphics[width=0.32\linewidth]{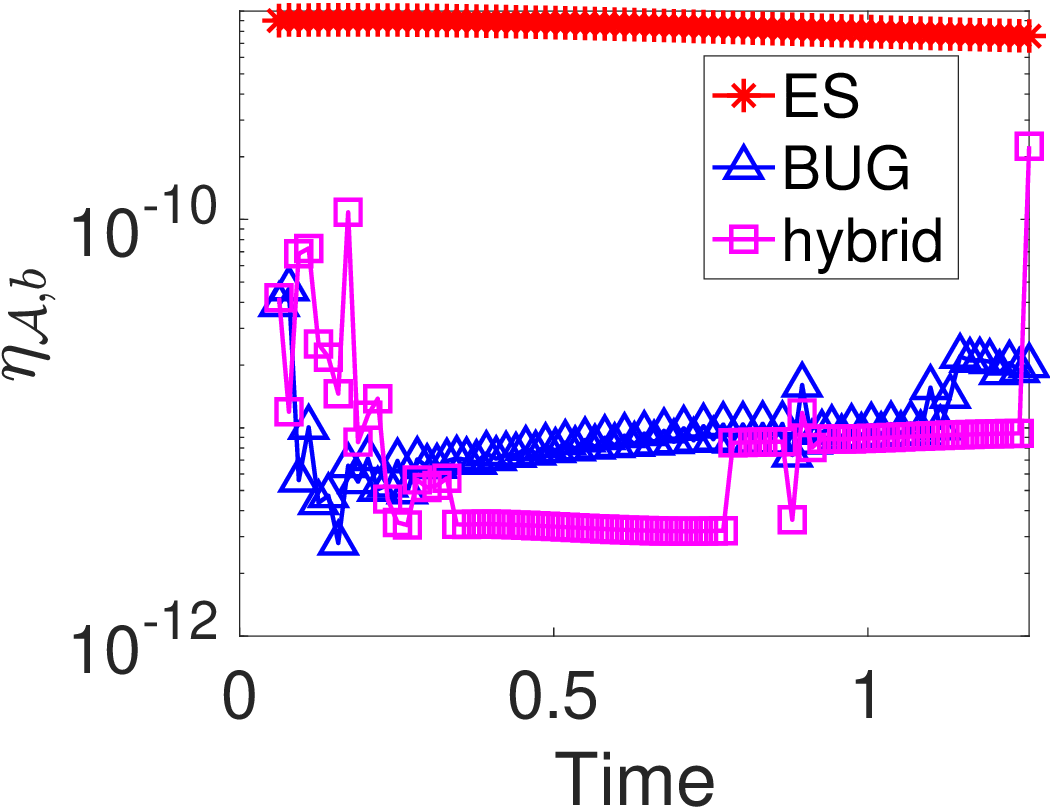} 
\includegraphics[width=0.32\linewidth]{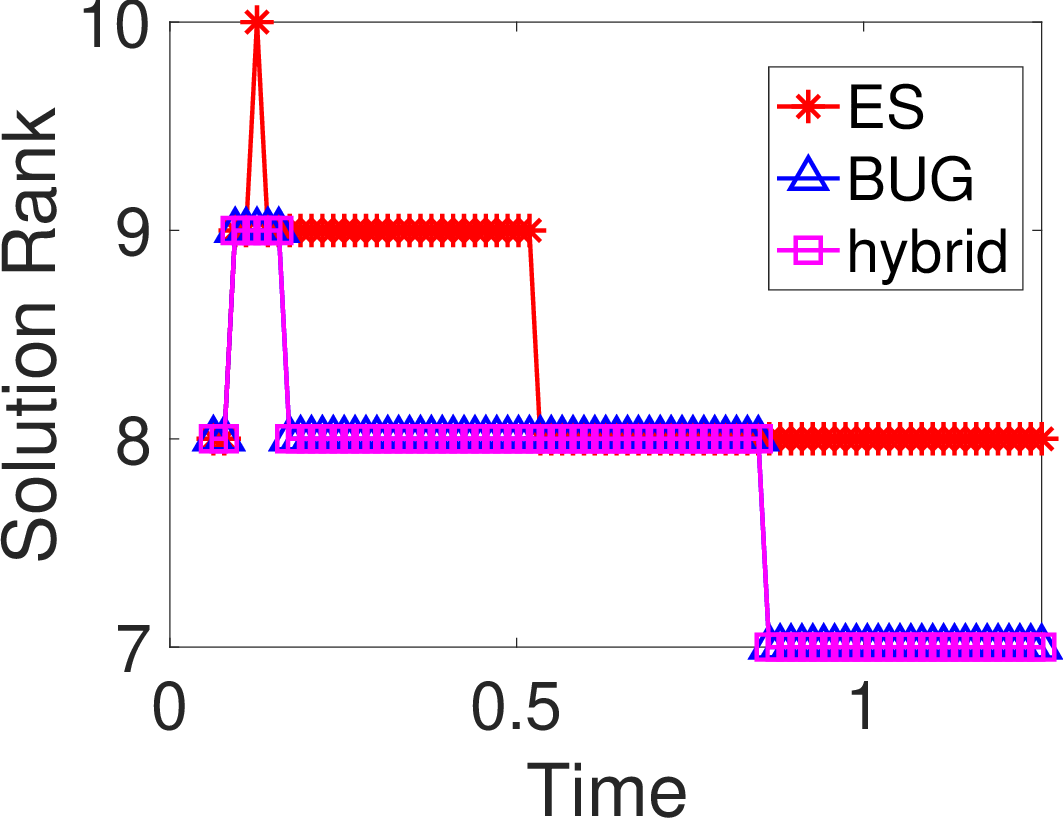} 
\includegraphics[width=0.32\linewidth]{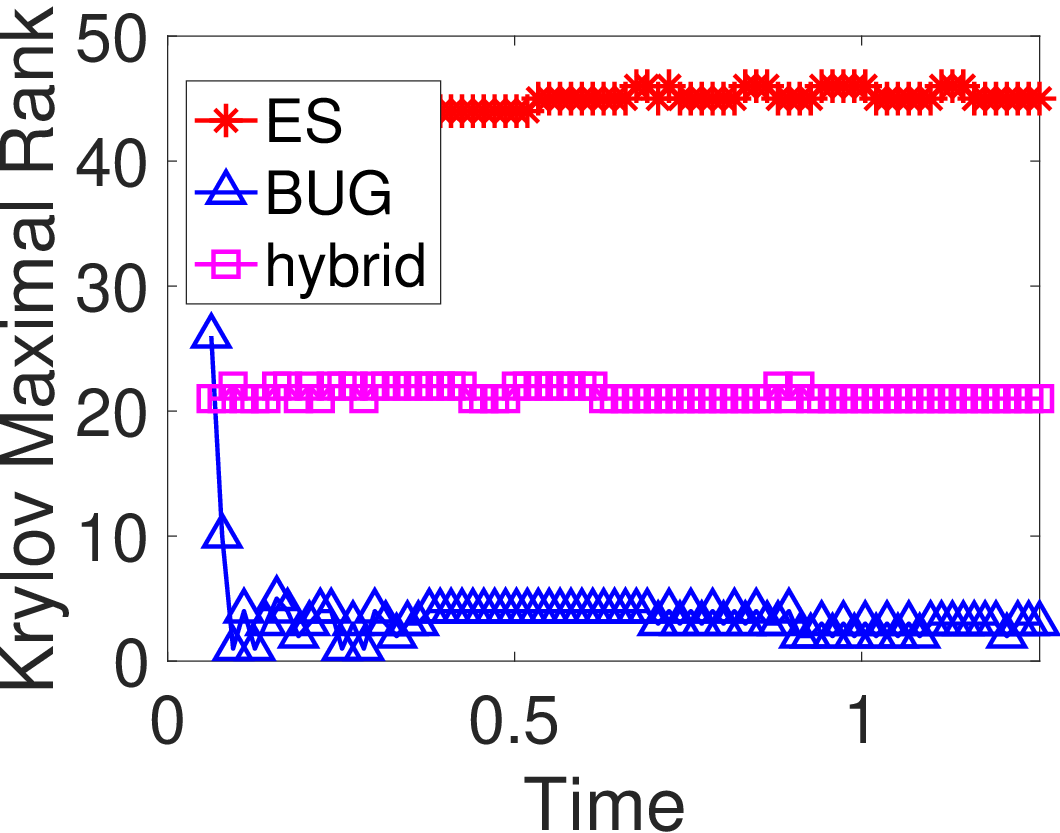} 
\caption{Example \ref{ex:bdf}. Solving diffusion equation with variable coefficient \eqref{coefficient BDF_1} and manufactured solution \eqref{mms BDF}.  BDF4    and forth order finite difference in space.  Preconditioner: ES, BUG, and hybrid. Fourth-order scheme with BDF. Rounding tolerance $\epsilon = h^5$. For $h=h_x = h_y  =1.56(-2)$, this figure displays the history of solution error, iteration number, $\eta_{\mathcal{A},b}$, solution rank, and maximal Krylov rank. 
} 
\label{figure ex_BDF_const}
\end{figure}
\end{example}

\begin{example}[Fourth order DIRK]\label{ex:dirk}
\textnormal{
We consider   the same problem as above, i.e. coefficients \eqref{coefficient BDF_1} and manufactured solution \eqref{mms BDF}
with fourth order DIRK scheme and fourth order spatial discretization. %
We compare the performance of the scheme with two different choice of $U,S,V.$ Those appear in the parameters in the BUG preconditioner, and also as initial guess for the all three preconditioners. 
Figure \ref{figure ex_DIRK X} and \ref{figure ex_DIRK X guess} display the history of solution error, $\eta_{\mathcal{A},b}$, solution rank, maximal Krylov rank, and  iteration number for three preconditioners. For the iteration number, we sum up the iteration numbers in the three inner stages of DIRK, while the maximal Krylov rank is taken as the maximal for the three stages.   In Figure \ref{figure ex_DIRK X}, the initial guess for the implicit {solvers} at DIRK inner stages is taken as the numerical solution $X^n,$ while in Figure \ref{figure ex_DIRK X guess}, it is taken as the inner stage solution at the previous time step, i.e. we use $X^{n-1, (j)}$ which is the $j-$th inner stage at the previous time step for the current $j-$th inner stage. We can see a visible difference between the two approaches.  With $X^n$ as the initial guess, all three preconditioners give excessively large solution rank, thus large maximal Krylov rank. With initial guess taken as previous inner stage values, the solution rank is at a reasonable value. This example indicates the advantage of using $X^{n-1, (j)}$ as the initial guess for the inner stage of Runge-Kutta methods. For this example, the hybrid conditioner offers the best overall quality in solution rank and maximal Krylov rank.
Table \ref{table ex_DIRK convergence} displays the solution error at the final time for the ES, BUG, and hybrid preconditioners with initial guess $X^{n-1, (j)}$. All three preconditioners have errors that are similar to a full rank solver. 
}

\begin{figure}[h] 
\centering
\includegraphics[width=0.32\linewidth]{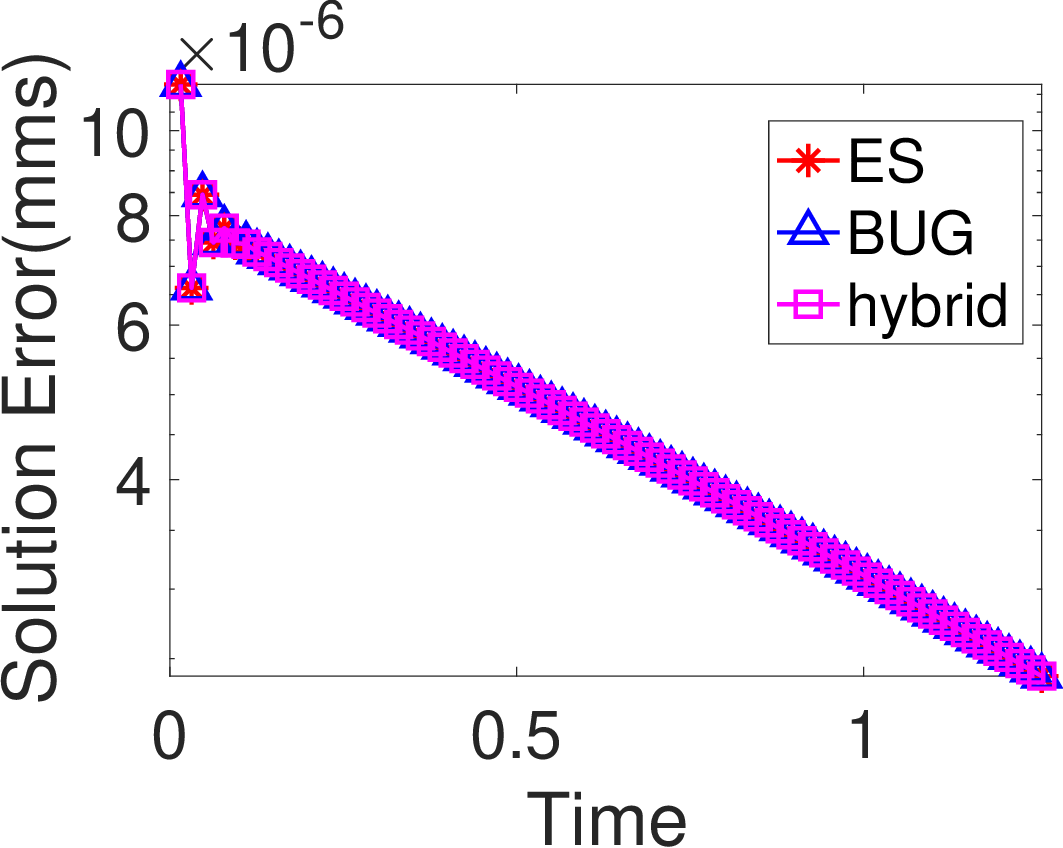} 
\hspace{0.1\linewidth}
\includegraphics[width=0.32\linewidth]{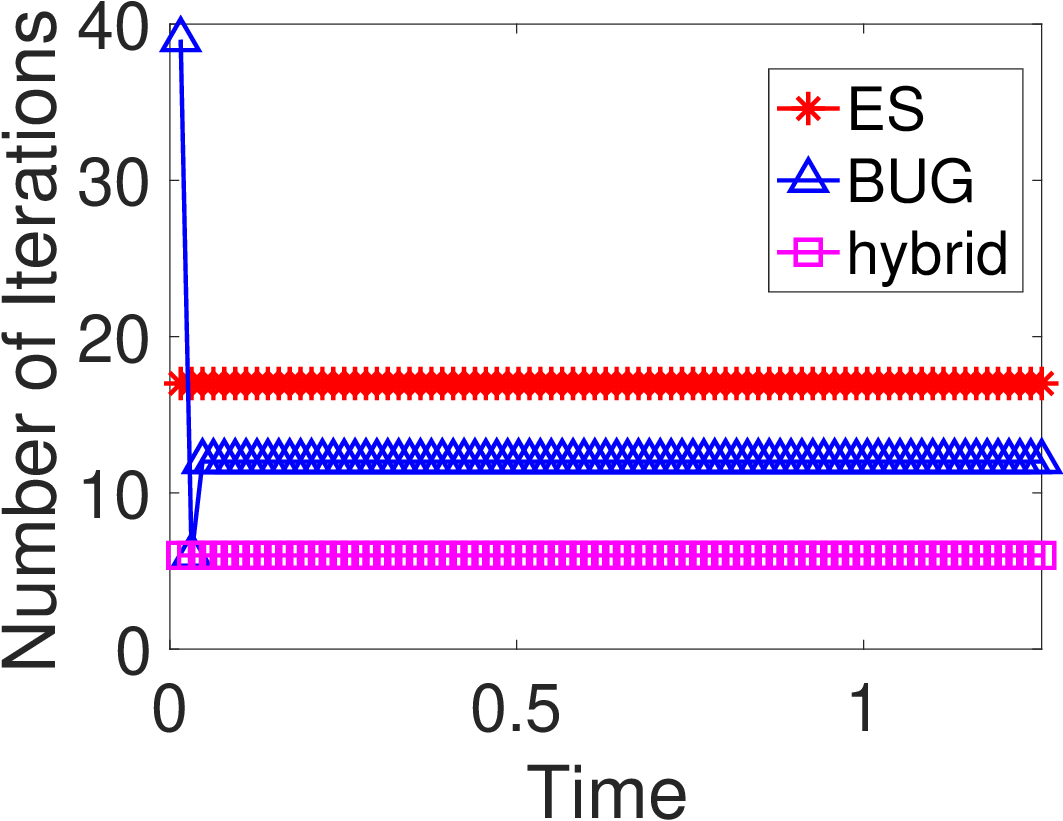} 
\\
\includegraphics[width=0.32\linewidth]{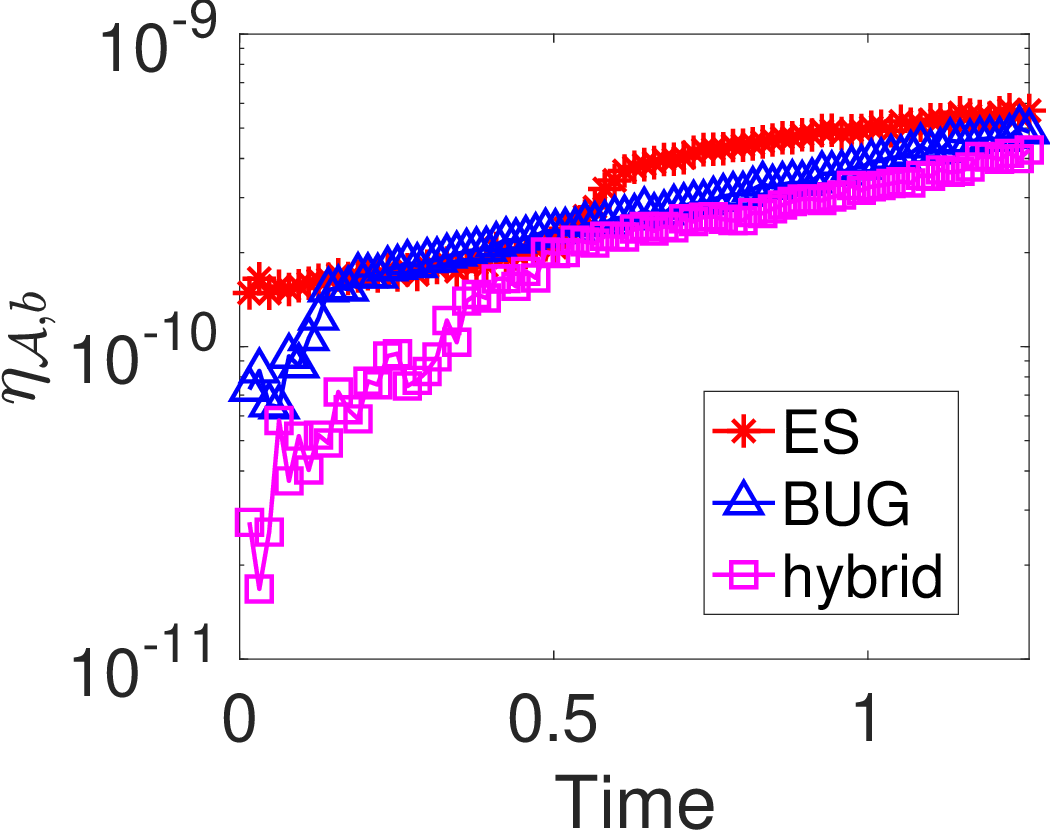} 
\includegraphics[width=0.32\linewidth]{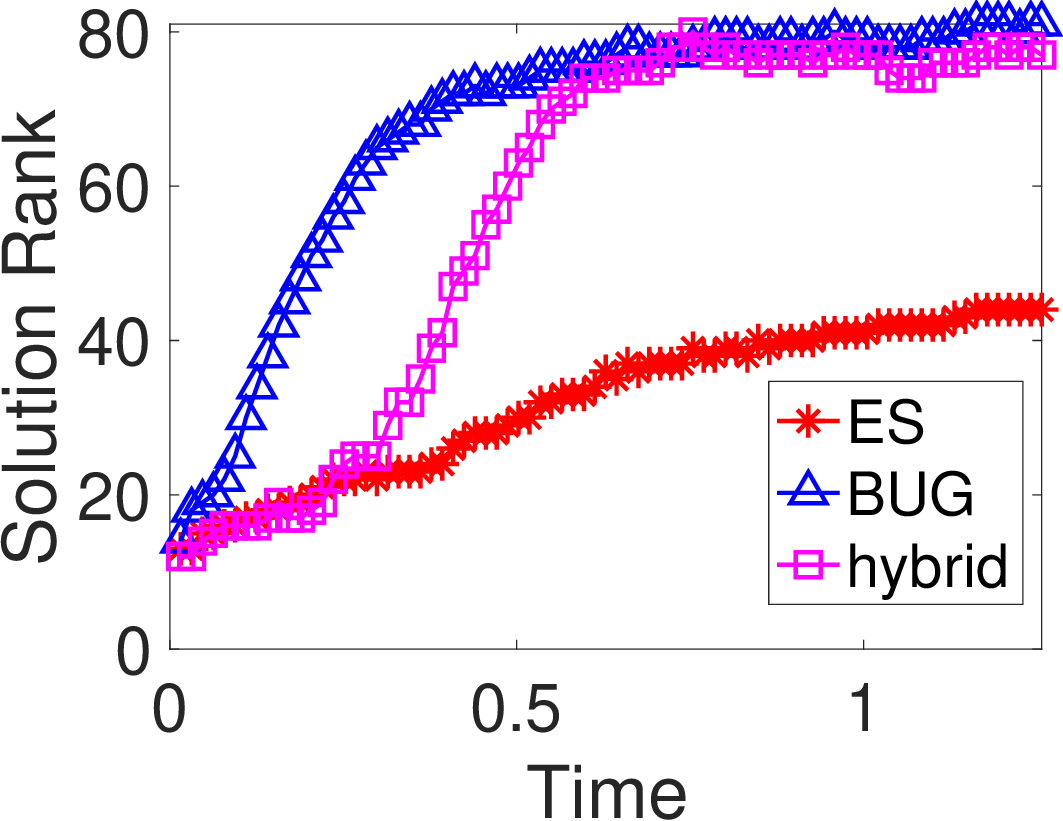} 
\includegraphics[width=0.32\linewidth]{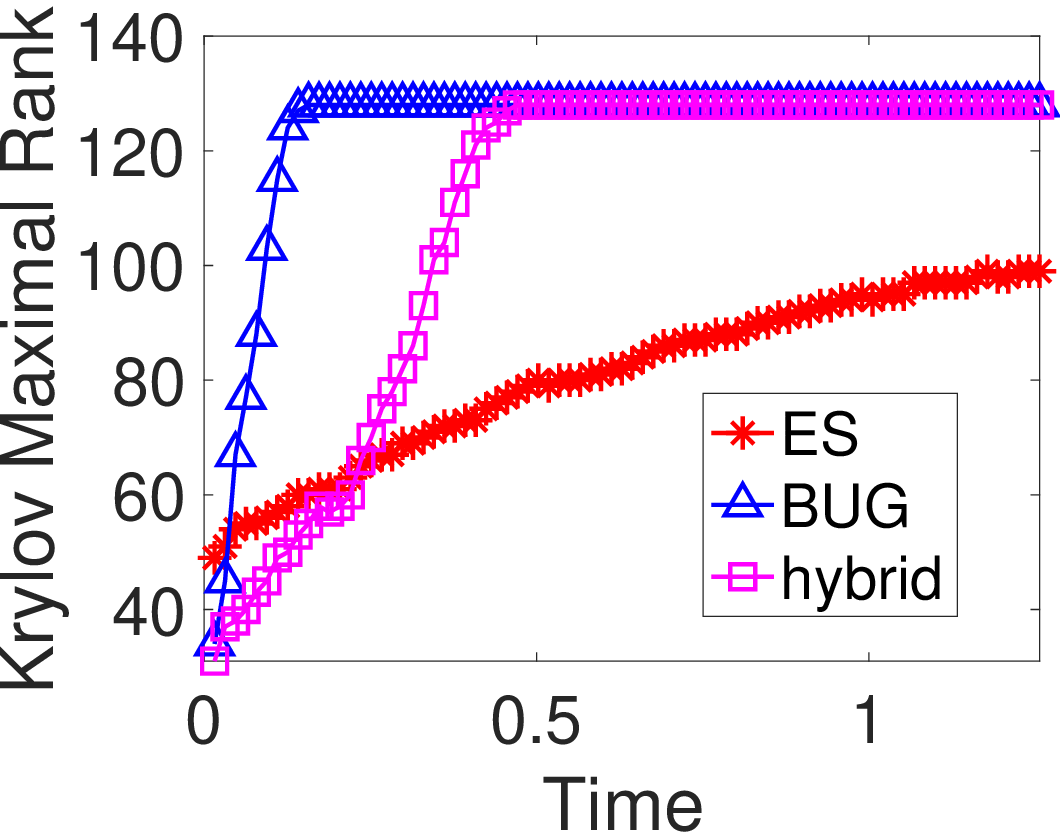} 
\caption{Example \ref{ex:dirk}. Solving diffusion equation with coefficients \eqref{coefficient BDF_1} and manufactured solution \eqref{mms BDF}.
Initial guess: $USV^T=X^{n}$. Fourth-order DIRK  and forth order finite difference in space. Preconditioner: ES, BUG, and hybrid. Rounding tolerance $\epsilon = h^5$ in implicit solver and $h^4$ in last step update $X^{n} \to X^{n+1}$. For $h = h_x = h_y  =1.56(-2)$, this figure displays the history of solution error, iteration number, $\eta_{\mathcal{A},b}$, solution rank, and maximal Krylov rank. For the iteration number, we sum up the iteration numbers in the three inner stages of DIRK, while the maximal Krylov rank is taken as the maximal for the three stages.
} 
\label{figure ex_DIRK X}
\end{figure}
 
\begin{figure}[h] 
\centering
\includegraphics[width=0.32\linewidth]{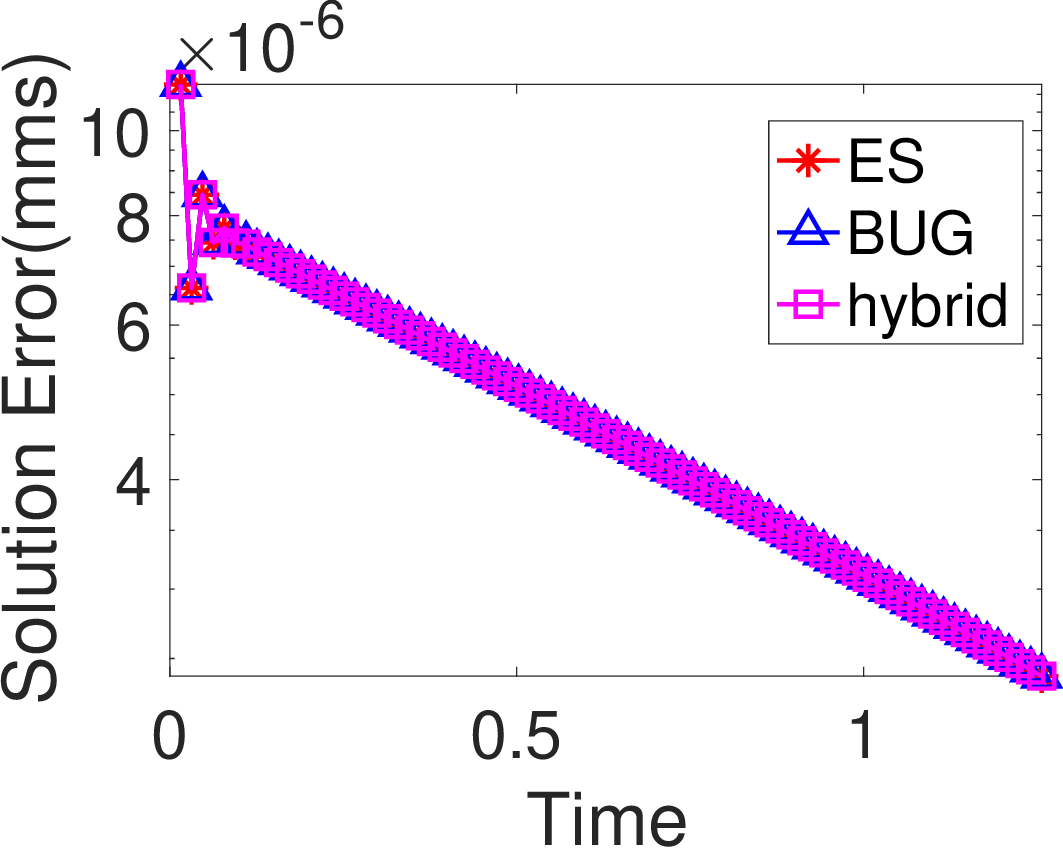} 
\hspace{0.1\linewidth}
\includegraphics[width=0.32\linewidth]{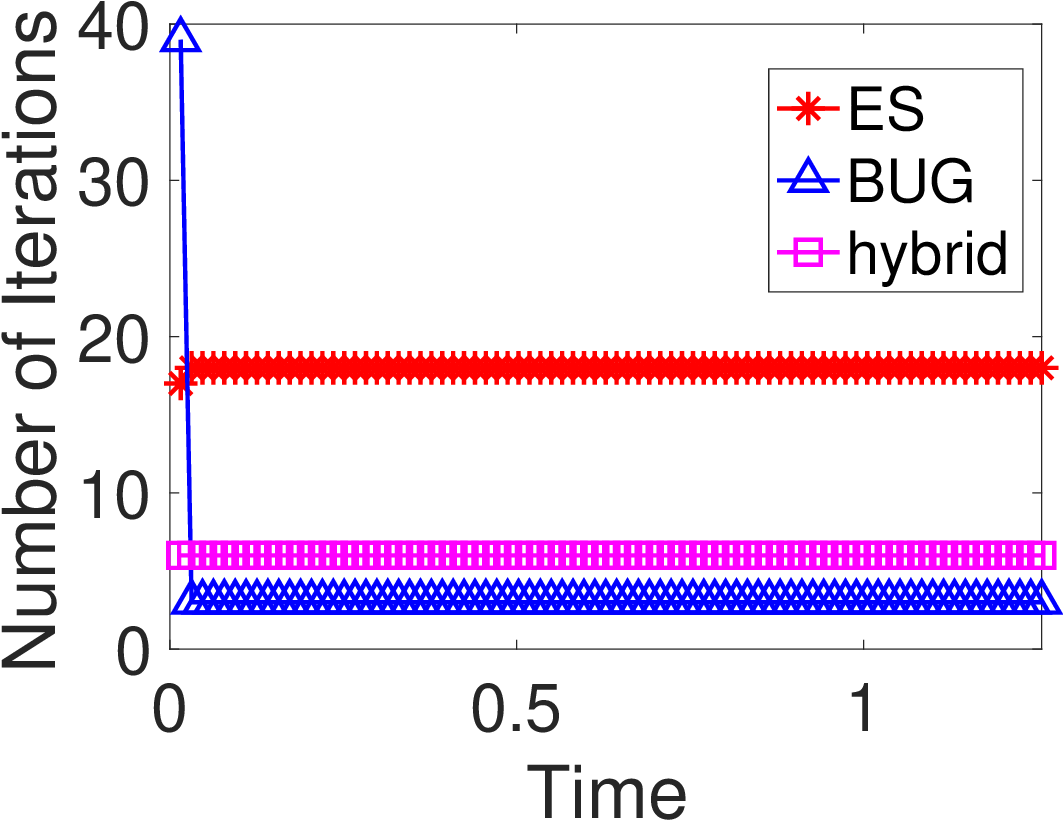} 
\\
\includegraphics[width=0.32\linewidth]{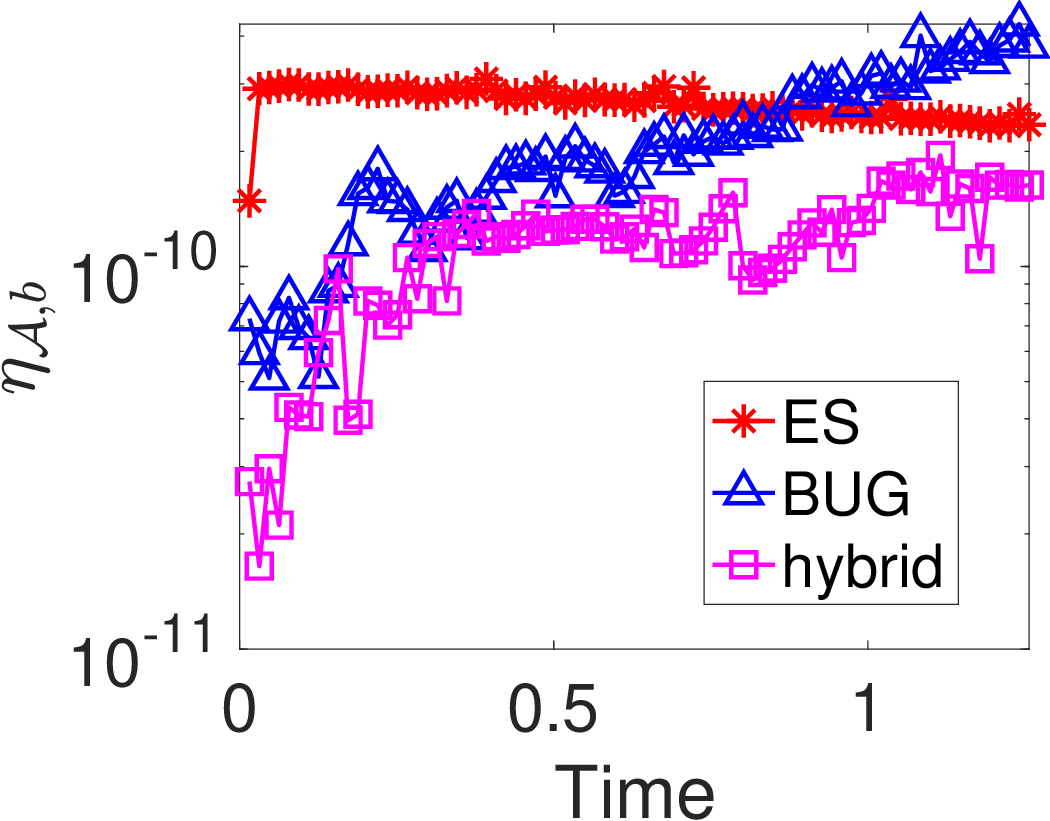} 
\includegraphics[width=0.32\linewidth]{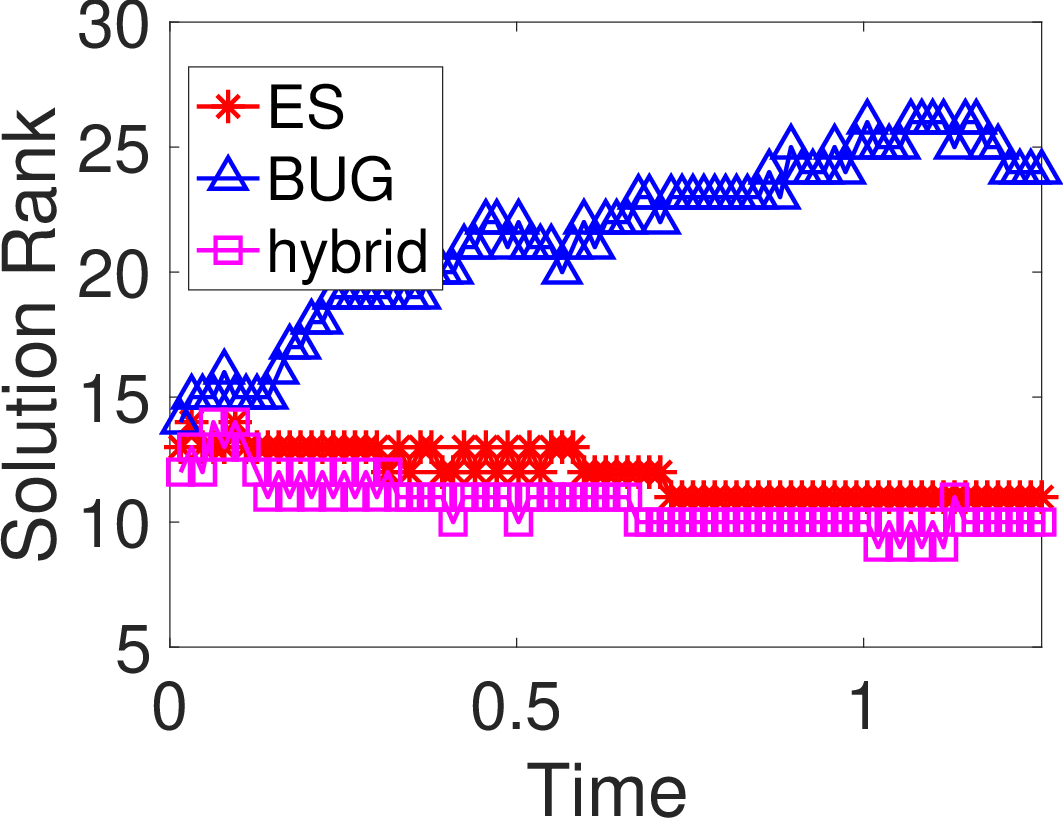} 
\includegraphics[width=0.32\linewidth]{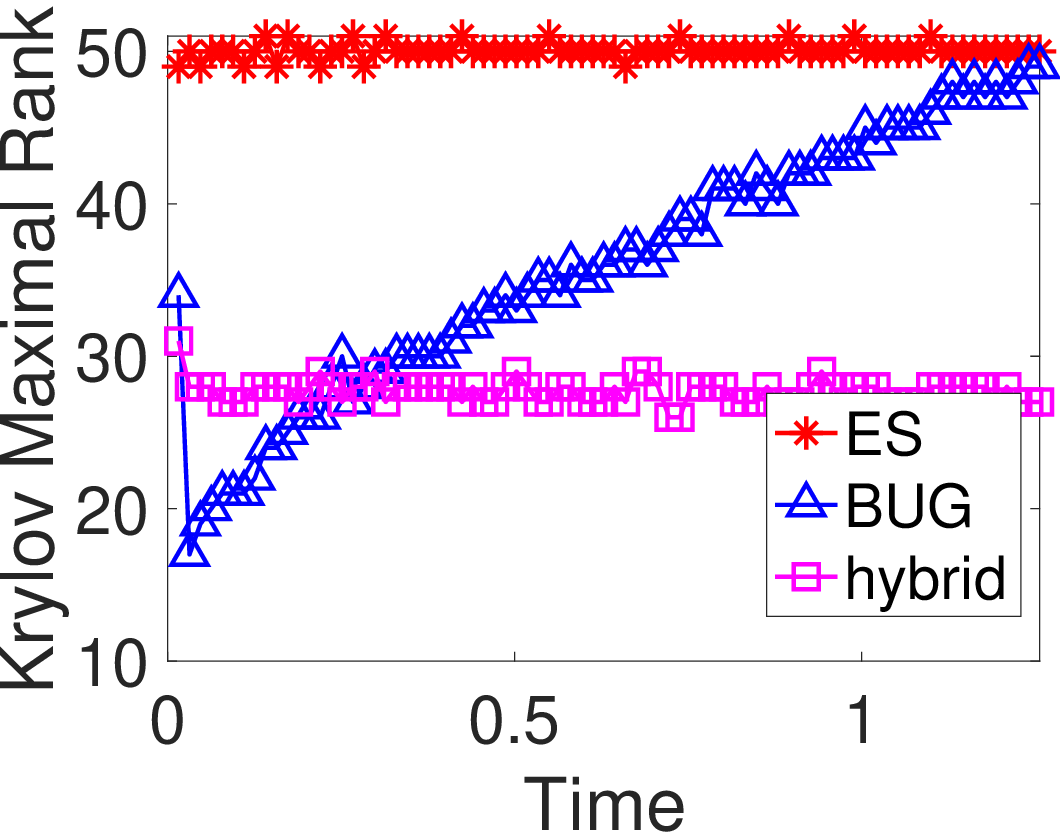} 
\caption{Example \ref{ex:dirk}. Solving diffusion equation with coefficients \eqref{coefficient BDF_1} and manufactured solution \eqref{mms BDF}.
Initial guess: $USV^T=X^{n-1, (j)}$ for updating implicit solver at the $j-$th inner stage in DIRK. Fourth-order DIRK  and forth order finite difference in space. Preconditioner: ES, BUG, and hybrid. Rounding tolerance $\epsilon = h^5$ in implicit solver and $h^4$ in last step update $X^{(n)} \to X^{(n+1)}$. For $h = h_x = h_y  =1.56(-2)$, this figure displays the history of solution error, iteration number, $\eta_{\mathcal{A},b}$, solution rank, and maximal Krylov rank. For the iteration number, we sum up the iteration numbers in the three inner stages of DIRK, while the maximal Krylov rank is taken as the maximal for the three stages.
} 
\label{figure ex_DIRK X guess}
\end{figure}

\end{example}

 \begin{table}[]  
 \begin{center} 
\begin{tabular}{| c | c | c | c| c|   c | c | c | c  |} 
\hline
  & ES & ES & BUG & BUG &  hybrid & hybrid & full rank & full rank \\
 $h$ & error   & order   & error   & order   & error   & order & error & order   \\
 \hline
 \hline
   1.25(-1) &    8.13(-3) & -- &    8.19(-3) & -- &   8.15(-3) & -- & 8.15(-3) & -- \\
 6.25(-2) &   4.10(-4) &  4.31 &  4.10(-4) &  4.32 &  4.10(-4) &  4.31 &  4.10(-4) &  4.31 \\
   3.12(-2) &   2.98(-5) &  3.78 &  2.98(-5) &  3.78 &  2.98(-5) &  3.78 &  2.98(-5) &  3.78 \\
 1.56(-2)  &   2.38(-6) &  3.64 &  2.38(-6) &  3.64 &  2.38(-6) &  3.64 &  2.38(-6) &  3.64 \\
 \hline
 \end{tabular}
 \caption{Example \ref{ex:dirk}. Solving diffusion equation with variable coefficients \eqref{coefficient BDF_1} and manufactured solution \eqref{mms BDF}.
 Fourth-order DIRK time stepping and forth order finite difference in space. Initial guess: $X^{n-1, (j)}$ for updating implicit solver at the $j-$th inner stage in DIRK.  Preconditioner: ES, BUG, and hybrid, in comparison with a full rank solver.   This table displays the solution error at the final time and the associated order.  
 }  
 \label{table ex_DIRK convergence}
 \end{center} 
 \end{table}

\section{Conclusions and future work}
\label{sec:conclude}
In this work, we propose   new preconditioners for the low rank GMRES schemes for implicit time discretization of the matrix differential equations. The preconditioner is based on the BUG method, which is a DLRA of the matrix differential equation on a fixed low rank manifold. The BUG method as a direct solver  is subject to the modeling error and does not have convergence guarantee for general problems.  On the other hand, when used as a preconditioner, it offers robust performance.  Moreover, the BUG preconditioner framework can easily accommodate arbitrary high order time stepping without the need to augment/enlarge the basis as done in variants of BUG \cite{Nakao:2023aa,Ceruti2024}. 

The BUG preconditioner is a nonlinear preconditioner. We perform a detailed study on   its parameters, including tolerance parameter, restart parameter and stopping criteria. We conclude a frequent restart with tolerance selected based on local truncation error of the scheme is effective. This is justified by stability and convergence analysis. We further propose a hybrid version where we alternate between BUG and ES preconditioner.
 The numerical evidence has suggested that BUG preconditioner and the hybrid version generally require a very 
 low iteration number except for the few initial time steps, and it can control the maximal Krylov rank well.  Moreover, compared to the standard ES preconditioner, it is less sensitive to larger rounding tolerance and is blind to the operator being inverted. We show that for 
 the challenging high contrast anisotropic cross diffusion example, the BUG preconditioner is more effective than the ES preconditioner.
Future  work includes higher dimensional extensions and applications.

\subsection*{Acknowledgment and Disclamer}
This material is based upon work supported by the U.S. Department of Energy, Office of Science, Advanced Scientific Computing Research (ASCR), under Award Number DE-SC0025424, and DOE Mathematical Multifaceted Integrated Capability Centers (MMICC) center grant DE-SC0023164.

This report was prepared as an account of work sponsored by an agency of the United States Government. Neither the United States Government nor any agency thereof, nor any of their employees, makes any warranty, express or implied, or assumes any legal liability or responsibility for the accuracy, completeness, or usefulness of any information, apparatus, product, or process disclosed, or represents that its use would not infringe privately owned rights. Reference herein to any specific commercial product, process, or service by trade name, trademark, manufacturer, or otherwise does not necessarily constitute or imply its endorsement, recommendation, or favoring by the United States Government or any agency thereof. The views and opinions of authors expressed herein do not necessarily state or reflect those of the United States Government or any agency thereof.
\bibliographystyle{abbrv}
\bibliography{ref.bib}

\begin{thebibliography}{10}

\bibitem{mBUG}
D.~Appel\"{o} and Y.~Cheng.
\newblock Robust implicit adaptive low rank time-stepping methods for matrix
  differential equations.
\newblock {\em arXiv:2402.05347}.
\newblock preprint.

\bibitem{bachmayr2012adaptive}
M.~Bachmayr.
\newblock {\em Adaptive low-rank wavelet methods and applications to
  two-electron Schr{\"o}dinger equations}.
\newblock PhD thesis, Hochschulbibliothek der Rheinisch-Westf{\"a}lischen
  Technischen Hochschule Aachen, 2012.

\bibitem{bachmayr2023low}
M.~Bachmayr.
\newblock Low-rank tensor methods for partial differential equations.
\newblock {\em Acta Numerica}, 32:1--121, 2023.

\bibitem{ballani2013projection}
J.~Ballani and L.~Grasedyck.
\newblock A projection method to solve linear systems in tensor format.
\newblock {\em Numerical linear algebra with applications}, 20(1):27--43, 2013.

\bibitem{Ceruti2024}
G.~Ceruti, L.~Einkemmer, J.~Kusch, and C.~Lubich.
\newblock A robust second-order low-rank bug integrator based on the midpoint
  rule.
\newblock {\em BIT Numerical Mathematics}, 64(3):30, Jul 2024.

\bibitem{ceruti2022rank}
G.~Ceruti, J.~Kusch, and C.~Lubich.
\newblock A rank-adaptive robust integrator for dynamical low-rank
  approximation.
\newblock {\em BIT Numerical Mathematics}, 62(4):1149--1174, 2022.

\bibitem{ceruti2022unconventional}
G.~Ceruti and C.~Lubich.
\newblock An unconventional robust integrator for dynamical low-rank
  approximation.
\newblock {\em BIT Numerical Mathematics}, 62(1):23--44, 2022.

\bibitem{coulaud2022robust}
O.~Coulaud, L.~Giraud, and M.~Iannacito.
\newblock A robust {GMRES} algorithm in tensor train format.
\newblock {\em arXiv preprint arXiv:2210.14533}, 2022.

\bibitem{dolgov2013tt}
S.~V. Dolgov.
\newblock {TT-GMRES}: solution to a linear system in the structured tensor
  format.
\newblock {\em Russian Journal of Numerical Analysis and Mathematical
  Modelling}, 28(2):149--172, 2013.

\bibitem{grasedyck2013literature}
L.~Grasedyck, D.~Kressner, and C.~Tobler.
\newblock A literature survey of low-rank tensor approximation techniques.
\newblock {\em GAMM-Mitteilungen}, 36(1):53--78, 2013.

\bibitem{hackbusch2015solution}
W.~Hackbusch.
\newblock Solution of linear systems in high spatial dimensions.
\newblock {\em Computing and visualization in science}, 17(3):111--118, 2015.

\bibitem{hackbusch2006low}
W.~Hackbusch and B.~N. Khoromskij.
\newblock Low-rank {K}ronecker-product approximation to multi-dimensional
  nonlocal operators. part i. {S}eparable approximation of multi-variate
  functions.
\newblock {\em Computing}, 76:177--202, 2006.

\bibitem{hackbusch2006low2}
W.~Hackbusch and B.~N. Khoromskij.
\newblock Low-rank {K}ronecker-product approximation to multi-dimensional
  nonlocal operators. part ii. {HKT} representation of certain operators.
\newblock {\em Computing}, 76:203--225, 2006.

\bibitem{koch}
O.~Koch and C.~Lubich.
\newblock Dynamical low‐rank approximation.
\newblock {\em SIAM Journal on Matrix Analysis and Applications},
  29(2):434--454, 2007.

\bibitem{kressner2015truncated}
D.~Kressner and P.~Sirkovi{\'c}.
\newblock Truncated low-rank methods for solving general linear matrix
  equations.
\newblock {\em Numerical Linear Algebra with Applications}, 22(3):564--583,
  2015.

\bibitem{kressner2010krylov}
D.~Kressner and C.~Tobler.
\newblock {K}rylov subspace methods for linear systems with tensor product
  structure.
\newblock {\em SIAM journal on matrix analysis and applications},
  31(4):1688--1714, 2010.

\bibitem{kressner2011low}
D.~Kressner and C.~Tobler.
\newblock Low-rank tensor {K}rylov subspace methods for parametrized linear
  systems.
\newblock {\em SIAM Journal on Matrix Analysis and Applications},
  32(4):1288--1316, 2011.

\bibitem{lam2024randomized}
H.~Y. Lam, G.~Ceruti, and D.~Kressner.
\newblock Randomized low-rank runge-kutta methods.
\newblock {\em arXiv preprint arXiv:2409.06384}, 2024.

\bibitem{lubich2014projector}
C.~Lubich and I.~V. Oseledets.
\newblock A projector-splitting integrator for dynamical low-rank
  approximation.
\newblock {\em BIT Numerical Mathematics}, 54(1):171--188, 2014.

\bibitem{Nakao:2023aa}
J.~Nakao, J.-M. Qiu, and L.~Einkemmer.
\newblock Reduced augmentation implicit low-rank (rail) integrators for
  advection-diffusion and fokker-planck models.
\newblock 11 2023.

\bibitem{palitta2021convergence}
D.~Palitta and P.~K{\"u}rschner.
\newblock On the convergence of {K}rylov methods with low-rank truncations.
\newblock {\em Numerical Algorithms}, 88(3):1383--1417, 2021.

\bibitem{rodgers2023implicit}
A.~Rodgers and D.~Venturi.
\newblock Implicit integration of nonlinear evolution equations on tensor
  manifolds.
\newblock {\em Journal of Scientific Computing}, 97(2):33, 2023.

\bibitem{saad1986gmres}
Y.~Saad and M.~H. Schultz.
\newblock {GMRES}: A generalized minimal residual algorithm for solving
  nonsymmetric linear systems.
\newblock {\em SIAM Journal on scientific and statistical computing},
  7(3):856--869, 1986.

\bibitem{simoncini2016computational}
V.~Simoncini.
\newblock Computational methods for linear matrix equations.
\newblock {\em SIAM Review}, 58(3):377--441, 2016.

\bibitem{simoncini2023analysis}
V.~Simoncini and Y.~Hao.
\newblock Analysis of the truncated conjugate gradient method for linear matrix
  equations.
\newblock {\em SIAM Journal on Matrix Analysis and Applications},
  44(1):359--381, 2023.

\bibitem{tobler2012low}
C.~Tobler.
\newblock {\em Low-rank tensor methods for linear systems and eigenvalue
  problems}.
\newblock PhD thesis, ETH Zurich, 2012.

\end{thebibliography}

\appendix

\section{ES preconditioner}
We follow \cite{bachmayr2023low} to define the ES preconditioner for operators of the form
\begin{eqnarray*}
  A =   \sum_{i=1}^2 \bigotimes_{j=1}^2 B_{i,j}, \qquad B_{i,j} = \left\{\begin{array}{cc}
        A_{i}, & i=j, \\
        I, & i\not=j,
    \end{array} \right.
\end{eqnarray*}
with self-adjoint and positive definite matrices $A_i$. The construction of the ES preconditioner that approximates $A^{-1}$ is based on the approximation of  
$$
\frac{1}{t} = \int_{0}^\infty e^{-st} {\,\rm d} s, \quad t>0
$$
by Sinc quadrature and passing $t$ to $A$ in appropriate sense.

The following Lemma given by \cite[Corollary 4.3]{bachmayr2023low} states the approximation of $t^{-1}$ by ESs.
\begin{lemma}
    Let $\delta^* = \delta_0 + \eta < 1$ with $\delta_0,\eta \in (0,1)$, and $T>1$. With 
    \begin{eqnarray*}
        \alpha &=& \frac{2\pi}{\ln3 + |\ln (\cos 1)| + |\ln(\delta_0/2)|}, \\
        m &=& \lceil \alpha^{-1} \ln |\ln(\delta_0/2)| \rceil,\quad n = \lceil \alpha^{-1} (|\ln(\eta/2)| + \ln T) \rceil,
    \end{eqnarray*}
    define the ES preconditioner
    $$
    S_{1,\delta_0,\eta,T} = \alpha \sum_{-n}^m e^{k \alpha} \exp(-e^{k\alpha} t).
    $$
Then 
$$
|t^{-1} - S_{1,\delta_0,\eta,T}| \le \delta t^{-1}, \qquad \mbox{for all } t \in [1, T].
$$
\end{lemma}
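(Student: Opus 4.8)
The plan is to recognize $S_{1,\delta_0,\eta,T}$ as a \emph{truncated trapezoidal (Sinc) quadrature} applied to the integral representation $t^{-1}=\int_0^\infty e^{-st}\,ds$. After the substitution $s=e^{x}$ one gets $t^{-1}=\int_{-\infty}^{\infty} g(x)\,dx$ with $g(x)=e^{x}e^{-e^{x}t}$, so the infinite-node trapezoidal rule with spacing $\alpha$ reads $\alpha\sum_{k\in\mathbb Z}g(k\alpha)$ and $S_{1,\delta_0,\eta,T}$ is precisely its truncation $\alpha\sum_{k=-n}^{m}g(k\alpha)$. I would then split $|t^{-1}-S_{1,\delta_0,\eta,T}|$ into (a) the quadrature error $\big|\int_{\mathbb R}g-\alpha\sum_{k\in\mathbb Z}g(k\alpha)\big|$, (b) the right-tail truncation $\alpha\sum_{k>m}g(k\alpha)$, and (c) the left-tail truncation $\alpha\sum_{k<-n}g(k\alpha)$, and prove (a)$+$(b)$\le\delta_0 t^{-1}$ and (c)$\le\eta t^{-1}$, giving the claimed bound with constant $\delta^{*}=\delta_0+\eta$.

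For (a) I would use the classical strip estimate for the trapezoidal rule: if $g$ extends holomorphically to $\{|\operatorname{Im}z|<d\}$ with integrable boundary traces, then $\big|\int_{\mathbb R}g-\alpha\sum_k g(k\alpha)\big|\le\frac{1}{e^{2\pi d/\alpha}-1}\int_{\mathbb R}\big(|g(x+id)|+|g(x-id)|\big)\,dx$. Here $g(z)=e^{z}e^{-e^{z}t}$ is entire, and on $\operatorname{Im}z=\pm d$ with $0<d<\pi/2$ one has $\operatorname{Re}(e^{z})=e^{x}\cos d>0$, hence $|g(x\pm id)|=e^{x}e^{-te^{x}\cos d}$ and $\int_{\mathbb R}|g(x\pm id)|\,dx=(t\cos d)^{-1}$. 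Taking $d=1$ yields relative quadrature error $\le\frac{2}{\cos 1\,(e^{2\pi/\alpha}-1)}$; substituting the prescribed $\alpha$, for which $e^{2\pi/\alpha}=6/(\delta_0\cos 1)$, this is $\le\delta_0/2$. The quantities $\ln 3$, $|\ln\cos 1|$, $|\ln(\delta_0/2)|$ in the denominator of $\alpha$ are exactly the logarithms that appear on solving $e^{2\pi/\alpha}-1\ge 4/(\delta_0\cos 1)$.

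The tails are elementary once one checks monotonicity: since $g'(x)=e^{x}e^{-e^{x}t}(1-e^{x}t)$, $g$ decreases on $\{x\ge-\ln t\}$ and increases on $\{x\le-\ln t\}$. Because $t\ge 1$ (so $-\ln t\le 0\le m\alpha$), the right tail is dominated by its integral, $\alpha\sum_{k>m}g(k\alpha)\le\int_{m\alpha}^{\infty}g=t^{-1}e^{-e^{m\alpha}t}\le t^{-1}e^{-e^{m\alpha}}$, and the choice $m=\lceil\alpha^{-1}\ln|\ln(\delta_0/2)|\rceil$ gives $e^{m\alpha}\ge\ln(2/\delta_0)$, hence this is $\le(\delta_0/2)t^{-1}$ — the double logarithm reflecting the doubly exponential decay of $g$ at $+\infty$. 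Similarly, since $n\alpha\ge\ln T\ge\ln t$, the left tail satisfies $\alpha\sum_{k<-n}g(k\alpha)\le\int_{-\infty}^{-n\alpha}g=t^{-1}(1-e^{-e^{-n\alpha}t})\le e^{-n\alpha}$, and $n=\lceil\alpha^{-1}(|\ln(\eta/2)|+\ln T)\rceil$ gives $e^{-n\alpha}\le\eta/(2T)$, so the relative error $te^{-n\alpha}\le\eta/2$ \emph{uniformly} for $t\in[1,T]$; the $\ln T$ term is exactly what buys this uniformity. Adding (a), (b), (c) gives $|t^{-1}-S_{1,\delta_0,\eta,T}|\le(\delta_0+\eta)t^{-1}=\delta^{*}t^{-1}$.

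The main obstacle is step (a): one must verify the hypotheses of the strip quadrature estimate (the boundary traces of $g$ are only single-exponentially decaying as $\operatorname{Re}z\to-\infty$, so their integrability and the value $(t\cos d)^{-1}$ must be computed exactly) and, more importantly, track the constant through to land on precisely $\delta_0/2$ rather than an unspecified multiple of it — this is what pins down the constants $\ln 3$ and the factors $2$ appearing in $\alpha$, $m$, and $n$. Steps (b) and (c) and the substitution are routine; the only minor care needed there is the monotonicity bookkeeping used to dominate truncated sums by integrals, handled by the observations $-\ln t\le 0\le m\alpha$ and $\ln t\le\ln T\le n\alpha$.
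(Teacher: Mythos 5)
Your proof is correct, and it is essentially the standard argument behind this result: the paper itself does not prove the lemma but simply quotes it from \cite[Corollary 4.3]{bachmayr2023low}, where the bound is obtained exactly by the route you describe (substitution $s=e^{x}$, the strip error estimate for the infinite trapezoidal/Sinc rule with $d=1$, plus integral comparisons for the two truncated tails). Your constant tracking checks out: $e^{2\pi/\alpha}=6/(\delta_0\cos 1)$ gives relative quadrature error $\le 2\delta_0/(6-\delta_0\cos 1)\le\delta_0/2$, the right tail is $\le t^{-1}e^{-e^{m\alpha}}\le(\delta_0/2)t^{-1}$, and the left tail is $\le e^{-n\alpha}\le\eta/(2T)\le(\eta/2)t^{-1}$ for $t\in[1,T]$, summing to at most $\delta^{*}t^{-1}$. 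The only unproved assertion is $m\ge 0$ (needed so that $g$ is decreasing on $[m\alpha,\infty)$ for all $t\ge 1$); this does hold, since for $\delta_0$ close to $1$ one has $\ln\lvert\ln(\delta_0/2)\rvert\ge\ln\ln 2>-\alpha$, while for small $\delta_0$ the quantity is positive, but it deserves a line in a complete writeup.
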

Passing from $t^{-1}$ to $A^{-1}$ is based on the following Lemma given by \cite[Proposition 4.1]{bachmayr2023low}.
\begin{lemma}
Let the matrix $A$ be given by
\begin{eqnarray*}
  A =   \sum_{i=1}^2 \bigotimes_{j=1}^2 B_{i,j}, \qquad B_{i,j} = \left\{\begin{array}{cc}
        A_{i}, & i=j, \\
        I, & i\not=j,
    \end{array} \right.
\end{eqnarray*}
with self-adjoint and positive definite matrices $A_i \in \mathbb{R}^{N\times N}$ and assume that $\sigma(A) \subset [1, \mbox{cond}(A)]$. Given a continuous function $\tilde{f}: \mathbb{R}^+ \to \mathbb{R}^+$,   assume that for some $\delta^* \in (0,1)$ we have $t^{-1}-\tilde{f}(t)\le \delta t^{-1}$ for all $t \in \sigma(A) \subset [1, \mbox{cond}(A)]$. Then
$$
(1-\delta^*) \langle A^{-1} v,v \rangle \le \langle \tilde{f}(A) v,v \rangle \le (1+\delta^*) \langle A^{-1} v,v \rangle \mbox{ for all } v \in \mathbb{R}^N.
$$
\end{lemma}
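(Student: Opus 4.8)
The plan is to diagonalize $A$ explicitly, exploiting its Kronecker-sum structure, and then reduce the claimed operator inequality to a scalar inequality on the eigenvalues of $A$. First I would observe that with $B_{i,j}$ as specified one has $A = A_1\otimes I + I\otimes A_2$, the Kronecker sum of $A_1$ and $A_2$. Since $A_1,A_2$ are real symmetric, write their spectral decompositions $A_i = Q_i\Lambda_i Q_i^T$ with $Q_i$ orthogonal and $\Lambda_i = \mathrm{diag}(\lambda^{(i)}_1,\dots,\lambda^{(i)}_N)$. Then $Q := Q_1\otimes Q_2$ is orthogonal and $Q^T A Q = \Lambda_1\otimes I + I\otimes\Lambda_2$ is diagonal, so $A$ is symmetric positive definite with eigenvalues $\mu_{k,l} = \lambda^{(1)}_k + \lambda^{(2)}_l$ and an orthonormal eigenbasis given by the columns $q_{k,l}$ of $Q$. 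The standing normalization $\sigma(A)\subset[1,\mathrm{cond}(A)]$ then says precisely that each $\mu_{k,l}\in[1,\mathrm{cond}(A)]$.

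Next I would invoke the symmetric-matrix functional calculus: for the continuous function $\tilde f$, one sets $\tilde f(A) = Q\,\tilde f(\Lambda_1\otimes I + I\otimes\Lambda_2)\,Q^T$, where $\tilde f$ acts entrywise on the diagonal. Thus $\tilde f(A)$ is symmetric and shares the eigenbasis $\{q_{k,l}\}$ with $A$, with eigenvalues $\tilde f(\mu_{k,l})$; the same holds for $A^{-1}$, with eigenvalues $\mu_{k,l}^{-1}$.

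The key step is then elementary. The hypothesis $|t^{-1}-\tilde f(t)|\le\delta^* t^{-1}$ for all $t\in\sigma(A)$ is equivalent, for each eigenvalue $\mu := \mu_{k,l}$, to the two-sided bound $(1-\delta^*)\mu^{-1}\le\tilde f(\mu)\le(1+\delta^*)\mu^{-1}$. Expanding an arbitrary $v\in\mathbb{R}^N$ as $v = \sum_{k,l} c_{k,l}\,q_{k,l}$ gives
\[
\langle \tilde f(A)v,v\rangle = \sum_{k,l}\tilde f(\mu_{k,l})\,|c_{k,l}|^2, \qquad \langle A^{-1}v,v\rangle = \sum_{k,l}\mu_{k,l}^{-1}\,|c_{k,l}|^2,
\]
and summing the scalar inequalities against the nonnegative weights $|c_{k,l}|^2$ yields $(1-\delta^*)\langle A^{-1}v,v\rangle \le \langle\tilde f(A)v,v\rangle \le (1+\delta^*)\langle A^{-1}v,v\rangle$, which is the assertion.

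The argument is essentially routine once the Kronecker structure is used; the only point needing a little care — and what I would flag as the main (mild) obstacle — is justifying the spectral mapping for the merely continuous $\tilde f$ applied to the Kronecker-sum operator, i.e.\ that $\tilde f(A)$ defined via the symmetric functional calculus genuinely has eigenvalues $\tilde f(\lambda^{(1)}_k+\lambda^{(2)}_l)$ on the product eigenbasis. This follows from the simultaneous diagonalization above, but is worth stating cleanly since $A$ is not presented in diagonal form; everything else is a termwise comparison of nonnegative quadratic-form coefficients.
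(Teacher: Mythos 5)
Your argument is correct, and it is the standard one. A point of comparison: the paper does not prove this lemma at all --- it is quoted verbatim from the cited reference (Proposition 4.1 of the ES-preconditioner literature), so there is no in-paper proof to match; your spectral-decomposition argument is exactly the expected route there. Concretely, $A = A_1\otimes I + I\otimes A_2$ is the Kronecker sum, $Q_1\otimes Q_2$ simultaneously diagonalizes $A$, $A^{-1}$ and $\tilde f(A)$ with eigenvalues $\mu_{k,l}=\lambda^{(1)}_k+\lambda^{(2)}_l$, $\mu_{k,l}^{-1}$ and $\tilde f(\mu_{k,l})$ respectively, and the quadratic-form bound follows by summing the scalar inequality against the nonnegative coefficients $|c_{k,l}|^2$; the ``functional calculus'' worry you flag is indeed settled by this explicit simultaneous diagonalization. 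Two small remarks. First, as printed the hypothesis is one-sided, $t^{-1}-\tilde f(t)\le \delta t^{-1}$ (and mixes $\delta$ with $\delta^*$); taken literally it only yields the lower bound $(1-\delta^*)\langle A^{-1}v,v\rangle\le\langle\tilde f(A)v,v\rangle$. You silently read it as the two-sided bound $|t^{-1}-\tilde f(t)|\le\delta^* t^{-1}$, which is what the source assumes and what the conclusion requires --- worth stating explicitly rather than assuming. Second, the vectors should live in $\mathbb{R}^{N^2}$ (the space on which the Kronecker sum acts), not $\mathbb{R}^N$; this is a typo inherited from the statement, not an error in your reasoning.
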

With the above two lemmas, we are able to estimate the condition number using the ES preconditioner.
Let $A$, $S_{1,\delta_0,\eta,T}$ be given in the above two lemmas with their corresponding assumptions. Then it is expected from   \cite[Proposition 4.7]{bachmayr2023low} that
$
\mbox{cond} (A \mathcal{M}_{\delta,T})$ can be estimated by $ \frac{1+\delta^*}{1-\delta^*}
$, 
where $T \ge \mbox{cond} (A)$, and 
    $$
\mathcal{M}_{\delta^*,T} = S_{1,\delta^*/2,\delta^*/2,T} (A).
$$

In this paper, to define the ES preconditioner for equation \eqref{2d variable}, we consider the diagonal part of the diffusion coefficients, and use their domain averages. Namely, the operator to be used for the ES preconditioner is the approximate inverse corresponding to the scheme for $$u_t=\overline{a_1} \overline{b_1} u_{xx} +\overline{a_4} \overline{b_4} u_{yy},$$ where $\overline{\cdot}$ denotes the domain average of the function. This   will yield the following type of operator in the time stepping
$$
A = I\bigotimes I - \tau ( I \bigotimes C +  D \bigotimes I ),
$$
for a given $\tau$ (which is related to the time stepping). %
We simply  rewrite the operator as
$$
 I \bigotimes (0.5I-\tau   C) + (0.5I-\tau   D)\bigotimes I .
$$
Let $A_1 = (0.5I-\tau    C)$ and $A_2 = (0.5I-\tau   D)$, the exponential sum preconditioner reads
$$
\mathcal{M}_{\delta^*,T} = \alpha \sum_{-n}^m e^{k \alpha} \exp(-e^{k\alpha} A_2) \bigotimes  \exp(-e^{k \alpha} A_1).
$$
The choice of the parameter $T$ depends on the condition number of $\mathcal{A}$. In our applications,  the condition number of $\mathcal{A}$ is on the order of $\mathcal{O}(\tau \eta^{-2} h^{-2})$ and we chose $T = 4\tau \eta^{-2} h^{-2}$, where $\eta =1/10$ for high contrast and $\eta=1$ otherwise. 
We chose empirically $\delta^*=0.2$ in the numerical examples.
For every matrix  in SVD form $b= U_b S_b V_b^T$, we 
round $\mathcal{M}_{\delta^*,T} b$ by the truncation sum algorithm with a given tolerance and a maximal rank. 
We chose this rounding tolerance the same as the lrGMRES rounding tolerance $\epsilon$. 

\end{document}